\subjclass[2020]{11F55, 11F60, 11F70}
\keywords{hermitian modular forms, differential operators, pullback formula}
\thanks{This work was supported by JST SPRING, Grant Number JPMJSP2110. }
\author[N. Takeda]{Nobuki TAKEDA}
\address{Department of Mathematics, Graduate School of Science, Kyoto University, Kyoto 606-8502, Japan}
\email{takeda.nobuki.72z@st.kyoto-u.ac.jp}
\theoremstyle{definition}
\newtheorem{dfn}{Definition}[section]
\newtheorem{rem}[dfn]{Remark}
\newtheorem{ex}[dfn]{Example}
\theoremstyle{plain}
\newtheorem{prop}[dfn]{Proposition}
\newtheorem{dfnprop}[dfn]{Definition-Proposition}
\newtheorem*{cond}{Condition (A)}
\newtheorem*{claim}{Claim}
\newtheorem{lem}[dfn]{Lemma}
\newtheorem{thm}[dfn]{Theorem}
\newtheorem{cor}[dfn]{Corollary}
\newcommand{\bbc}{\mathbb{C}}
\newcommand{\bbr}{\mathbb{R}}
\newcommand{\bbq}{\mathbb{Q}}
\newcommand{\bbz}{\mathbb{Z}}
\newcommand{\rmu}{\mathrm{U}}
\newcommand{\bfa}{\mathbf{a}}
\newcommand{\bfb}{\mathbf{b}}
\newcommand{\bfh}{\mathbf{h}}
\newcommand{\bfn}{\mathbf{n}}
\newcommand{\bfe}{\mathbf{E}}
\newcommand{\caln}{\mathcal{N}}
\newcommand{\calp}{\mathcal{P}}
\newcommand{\calk}{\mathcal{K}}
\newcommand{\calm}{\mathcal{M}}
\newcommand{\call}{\mathcal{L}}
\newcommand{\frakn}{\mathfrak{n}}
\newcommand{\bfi}{\boldsymbol{i}}
\newcommand{\adele}{\mathbb{A}}
\newcommand{\gn}{\mathrm{G}_\bfn}
\newcommand{\kp}{K^+}
\newcommand{\dkl}{\mathbb{D}_{\boldsymbol{k}, \boldsymbol{l}}}
\renewcommand{\Re}{\mathrm{Re}}
\renewcommand{\Im}{\mathrm{Im}}
\newcommand{\bfk}{\boldsymbol{k}}
\newcommand{\bfl}{\boldsymbol{l}}
\newcommand{\kv}{\boldsymbol{k}_v}
\newcommand{\lv}{\boldsymbol{l}_v}
\newcommand{\va}{v\in\mathbf{a}}
\DeclareMathOperator*{\bigboxtimes}{ \boxtimes}
\providecommand{\abs}[1]{\left\lvert#1\right\rvert}
\providecommand{\norm}[1]{\left\lvert#1\right\rvert}
\providecommand{\adj}[1]{{#1^*}}
\providecommand{\uni}[1]{\mathrm{U}_{#1}}
\providecommand{\hus}[1]{\mathfrak{H}_{#1}}
\providecommand{\inte}[1]{\mathcal{O}_{#1}}
\providecommand{\herm}[1]{\mathbb{S}_{#1}}
\numberwithin{equation}{section}
\DeclareMathOperator{\tr}{Tr}
\begin{document}
\title{Differential operators on hermitian modular forms on $\rmu(n,n)$}
\date{\today}
\maketitle
\tableofcontents
\begin{abstract}
	We construct explicit differential operators on hermitian modular forms, extending methods developed for Siegel modular forms.
	These differential operators are closely related to the two-variable spherical pluriharmonic polynomials.
	We construct explicit bases for the space of such polynomials and use them to build concrete operators.
	As an application, we derive an exact pullback formula for hermitian Eisenstein series.
\end{abstract}
\section{Introduction}

In the theory of Siegel modular forms, differential operators have been explicitly constructed and employed by various researchers to investigate both algebraic and analytic properties, including Fourier expansions, inner products, and their relations to special values of $L$-functions (see for example \cite{Bocherer1985Uber}, \cite{BoSaYa1992pullback}, \cite{Kozima2021Pullback}, \cite{Katsurada2010exact}).
In particular, Ibukiyama proposed a general framework for differential operators on Siegel modular forms, introducing conditions under which these operators preserve automorphic properties \cite{ibukiyama1999differential}.
This framework has since been further developed using representation-theoretic methods, shedding light on deeper structural aspects \cite{Takeda2025Kurokawa}.

In contrast, the theory of differential operators for Hermitian modular forms remains largely undeveloped, with only a handful of known results in specific cases \cite{Browning2024Constructing}.
The aim of this paper is to generalize the techniques developed in the context of Siegel modular forms to the Hermitian setting. Specifically, we construct explicit differential operators on Hermitian modular forms and investigate their structural and arithmetic applications.
\medskip

We now formulate a specific condition satisfied by these differential operators.

Let $K$ be a quadratic imaginary extension of a totally real field $\kp$, and let $\bfa$ (resp. $\bfh$) denote the set of archimedean (resp. non-archimedean) places of $\kp$.
We define $m:=m_{\kp}:=\#\bfa = [\kp:\bbq]$.
Let $M_\rho(\Gamma_K^{(n)}(\frakn))$ denote the complex vector space of Hermitian modular forms of weight $\rho$ and level $\frakn$ on the Hermitian upper half space $\hus{n}^\bfa$.
Let $(\rho_{n, \bfk}, V_{n,\bfk})$ denote the corresponding representation of $\mathrm{GL}_n(\bbc)$.

Let $(\bfk,\bfl) = (\kv,\lv)_\va$ be a family of pairs of dominant integral weights, satisfying $\ell(\kv)\leq n$ and $\ell(\lv)\leq n$ for each $v \in \bfa$.
We then define a representation $\rho_{n,(\bfk,\bfl)} = \bigboxtimes_{\va} (\rho_{n, \kv} \boxtimes \rho_{n, \lv})$ of the group $K_{n,\infty}^\bbc := \prod_{\va} (\mathrm{GL}_n(\bbc) \times \mathrm{GL}_n(\bbc))$.
Let $G_n$ denote a unitary group defined over $\kp$.

Fix integers $n_1,\ldots,n_d$ with $n_1 \geq \cdots \geq n_d \geq 1$ and set $n = n_1 + \cdots + n_d$.
Let $(\rho_s, V_s)$ be a representation of $K_{(n_s)}^\bbc$ for $s=1,\ldots,d$, and let $\kappa = (\kappa_v)_\va$ be a family of positive integers.
We consider differential operators $\mathbb{D}$ valued in $V := V_{1} \otimes \cdots \otimes V_{d}$, which act on scalar-valued functions on $\hus{n}$ and satisfy the following condition:

\begin{cond}
	For any modular form $F \in M_\kappa(\Gamma_K^{(n)})$, we have
	\[
		\mathrm{Res}(\mathbb{D}(F)) \in \bigotimes_{i=1}^d M_{\det^\kappa \rho_{n_i}}(\Gamma_K^{(n_i)}),
	\]
	where $\mathrm{Res}$ denotes the restriction of a function on $\hus{n}^\bfa$ to $\hus{n_1}^\bfa \times \cdots \times \hus{n_d}^\bfa$.
\end{cond}

This condition corresponds to Case (I) in \cite{ibukiyama1999differential}.

We write $\partial_Z := \left( \frac{\partial}{\partial Z_{v,ij}} \right)_{\va}$.
Let $P_v(X)$ be a vector-valued polynomial on the space $M_n$ of $n \times n$ matrices.
The following proposition characterizes when the operator $\mathbb{D} = P(\partial_Z) := (P_v(\partial_{Z_v}))_\va$ satisfies Condition (A):

\begin{prop}[Proposition~\ref{prop:diff}]
	Let $n_1,\ldots,n_d$ be positive integers with $n_1 \geq \cdots \geq n_d \geq 1$ and $n = n_1 + \cdots + n_d$.
	Fix a family $(\bfk_s, \bfl_s) = (\bfk_{s,v}, \bfl_{s,v})_\va$ of dominant integral weights satisfying $\ell(\bfk_{s,v}) \leq n_d$, $\ell(\bfl_{s,v}) \leq n_d$, and $\ell(\bfk_{s,v}) + \ell(\bfl_{s,v}) \leq \kappa_v$ for each $v \in \bfa$ and $s=1,\ldots,d$.

	Let $P_v(T)$ be a polynomial valued in $V_{n_1,\bfk_{v,1},\bfl_{v,1}} \otimes \cdots \otimes V_{n_d,\bfk_{v,d},\bfl_{v,d}}$ on $M_n$.
	Then $\mathbb{D} = \prod_\va P_v(\partial_{Z_v})$ satisfies Condition (A) for weight $\det^\kappa$ and representation $\det^\kappa \rho_{n_1,\bfk_1,\bfl_1} \otimes \cdots \otimes \rho_{n_d,\bfk_d,\bfl_d}$ if and only if each $P_v(T)$ satisfies the following:

	\begin{enumerate}
		\item Let $\widetilde{P_v}(X_1,\ldots,X_d,Y_1,\ldots,Y_d)$ be defined by
		      \[
			      \widetilde{P_v}(X_1,\ldots,Y_d) := P_v\left(
			      \begin{pmatrix}
					      X_1\,^tY_1 & \cdots & X_1\,^tY_d \\
					      \vdots     & \ddots & \vdots     \\
					      X_d\,^tY_1 & \cdots & X_d\,^tY_d
				      \end{pmatrix}
			      \right),
		      \]
		      where $X_i, Y_i \in M_{n_i, \kappa_v}$.
		      Then $\widetilde{P_v}$ must be pluriharmonic in each pair $(X_i, Y_i)$.

		\item For $(A_i, B_i) \in K_{n_i,v}^\bbc := \mathrm{GL}_{n_i}(\bbc) \times \mathrm{GL}_{n_i}(\bbc)$, we have
		      \[
			      P_v\left(
			      \begin{pmatrix} A_1 & & \\ & \ddots & \\ & & A_d \end{pmatrix}
			      T
			      \begin{pmatrix} ^t\!B_1 & & \\ & \ddots & \\ & & ^t\!B_d \end{pmatrix}
			      \right)
			      =
			      \left( \bigotimes_{i=1}^d \rho_{n_i,\bfk_{i,v},\bfl_{i,v}}(A_i,B_i) \right) P_v(T).
		      \]
	\end{enumerate}
\end{prop}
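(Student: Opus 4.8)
The plan is to pass to a single archimedean place, recast Condition (A) as an intertwining property of automorphy factors, and then convert that property into a pointwise identity on the symbol $P_v$ by testing against an exponential kernel. Since $\mathbb{D}=\prod_{\va}P_v(\partial_{Z_v})$ is a product of operators each depending on $Z_v$ alone, while the source weight $\det^{\kappa}$, the target representation, and the space $\bigotimes_i M_{\det^{\kappa}\rho_{n_i}}$ all factor over $\bfa$, I would first check that $\mathbb{D}$ satisfies Condition (A) if and only if each $P_v$ satisfies the stated conditions for its factor. This reduces the problem to a fixed $v$, which I suppress below.

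The core reformulation uses the block-diagonal embedding. The group $\Gamma_K^{(n)}$ contains the block-diagonal image of $\prod_{i=1}^d\Gamma_K^{(n_i)}$, and in particular each factor $\Gamma_K^{(n_i)}$ embedded with the identity in the remaining blocks; these stabilize the block-diagonal locus $\mathrm{diag}(Z_1,\dots,Z_d)$ inside $\hus{n}$, so that modularity of $\mathrm{Res}(\mathbb{D}(F))$ on each $\hus{n_i}$ is governed by the transformation of $P(\partial_Z)F$ under such $\gamma$. Writing each $\gamma_i=\begin{pmatrix}A_i&B_i\\ C_i&D_i\end{pmatrix}$, I would use the Fourier expansion $F(Z)=\sum_R a(R)\,e(Z,R)$ with $e(Z,R)=\exp(2\pi i\,\tr(RZ))$ together with the elementary identity $P(\partial_Z)e(Z,R)=P(2\pi i R)\,e(Z,R)$ to reduce the operator to evaluation of its symbol. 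Condition (A) then becomes the statement that, after restriction to the block-diagonal, the evaluated symbols transform under $Z\mapsto\gamma Z$ exactly by the target factor $\bigotimes_i\det(C_iZ_i+D_i)^{\kappa}\,\rho_{n_i,\bfk_i,\bfl_i}(C_iZ_i+D_i,\overline{C_iZ_i+D_i})$.

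From here the two conditions emerge from the two directions in the relevant parabolic. The action of the complexified maximal compact $K_{n_i,v}^{\bbc}=\mathrm{GL}_{n_i}(\bbc)\times\mathrm{GL}_{n_i}(\bbc)$ on the symbol is purely multiplicative, and matching it against the target factor forces the covariance law in (2). For the unipotent (off-diagonal $C$-block) direction, the first-order part of the substitution contributes a Laplace-type operator in the off-diagonal matrix entries, and Condition (A) holds exactly when these contributions vanish on the block-diagonal. To read off the vanishing I would make the moment-map substitution $T=(X_i{}^tY_j)_{ij}$ with $X_i,Y_i\in M_{n_i,\kappa}$, under which the off-diagonal Laplacians become precisely the pair of harmonic operators in each $(X_i,Y_i)$; thus the vanishing is equivalent to pluriharmonicity of $\widetilde{P}$ in each pair, which is (1). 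The stability bound $\ell(\bfk_{s,v})+\ell(\bfl_{s,v})\le\kappa_v$ guarantees that $T=X{}^tY$ is faithful enough on the relevant $K_{n_i,v}^{\bbc}$-type that harmonicity of the symbol and pluriharmonicity of $\widetilde{P}$ are genuinely equivalent rather than one-directional.

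\textbf{The main obstacle} will be the bookkeeping in the unipotent direction: organizing the chain rule for $P(\partial_Z)$ under $Z\mapsto\gamma Z$ so that the obstruction to modularity is isolated as a single double-Laplacian condition after the substitution $T=X{}^tY$, and verifying that no lower-order cross terms survive on the block-diagonal. For necessity I would exhibit sufficiently many modular forms—concretely, hermitian Eisenstein series whose Fourier coefficients $a(R)$ range over a Zariski-dense set of positive hermitian matrices—so that the automorphy constraint on $\sum_R a(R)\,P(2\pi iR)\,e(Z,R)$ forces the symbol identities (1) and (2) rather than merely being implied by them, the stable-range hypothesis again ensuring enough such coefficients are available. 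Once both directions are established, reassembling the places yields the stated equivalence for $\mathbb{D}$.
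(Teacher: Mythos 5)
The paper does not actually prove this statement: it is quoted as Corollary~3.21 of \cite{Takeda2025pullback}, so your attempt has to be measured against the standard Ibukiyama-type argument that that reference carries out. Your outline has the right skeleton --- reduction to a single place, covariance under the Levi giving condition (2), the automorphy factor giving condition (1) via the substitution $T=X\,{}^tY$ --- but the step you yourself flag as ``the main obstacle'' is not a bookkeeping nuisance; it is the entire content of the proposition, and the tools you propose do not deliver it. The identity $P(\partial_Z)e(\operatorname{tr}(RZ))=P(2\pi i R)e(\operatorname{tr}(RZ))$ computes $\mathbb{D}F$ itself, but Condition (A) is about $\mathbb{D}$ applied to the slashed function $\det(C_iZ_i+D_i)^{-\kappa}F(\gamma\langle Z\rangle)$, whose exponential structure is destroyed by the M\"obius substitution and the automorphy factor; the Fourier expansion therefore gives you no handle on sufficiency. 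What the chain rule actually produces is not a ``Laplace-type operator'' but the specific mixed operator $D^{(\kappa)}_{ij}=\kappa\,\partial_{ij}+\sum_{k,l}t_{kl}\partial_{il}\partial_{kj}$ of \eqref{eq:Dij} --- the first-order term with coefficient $\kappa$ comes from one differentiation of $\det(C_iZ_i+D_i)^{-\kappa}$, the second-order term from the Jacobian of $\gamma_i\langle Z_i\rangle$ --- and the obstruction must vanish precisely for $(i,j)$ in the \emph{diagonal} blocks $I(\bfn)_s$, which under $T=X\,{}^tY$ becomes $\Delta_{ij}\widetilde{P}=0$ for the pairs $(X_s,Y_s)$. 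Your proposal never isolates this operator, so the equivalence between Condition (A) and condition (1) is asserted rather than proved.

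The necessity direction has a second gap. Hermitian Eisenstein series of weight $\kappa$ need $\kappa$ large to converge and be holomorphic, whereas the hypotheses only require $\ell(\bfk_{s,v})+\ell(\bfl_{s,v})\le\kappa_v$; moreover, for $\kappa<n$ the Fourier coefficients of \emph{any} weight-$\kappa$ form are supported on hermitian matrices of rank at most $\kappa$, so your claim that Eisenstein coefficients range over a Zariski-dense set of positive definite matrices is false in exactly the regime where it matters. The standard fix is to test against theta series attached to rank-$\kappa$ hermitian lattices, whose coefficients are supported exactly on $\{XX^{*}:X\in M_{n,\kappa}\}$; this is what makes the substitution $T=X\,{}^tY$ (with $Y=\overline{X}$) the natural parametrization and makes pluriharmonicity of $\widetilde{P}$ the exactly correct condition rather than an over- or under-determination of $P$. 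As written, neither direction of the equivalence is established.
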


Condition (1) implies that these differential operators are closely related to two-variable spherical pluriharmonic polynomials.
Accordingly, a substantial part of this paper is devoted to analyzing the space of such higher-degree pluriharmonic polynomials, building on the work of Ibukiyama and Zagier \cite{Ibukiyama2014Higher}, and to providing explicit descriptions of their bases, namely the ``descending basis'' and the ``monomial basis''.

These bases are used to construct explicit differential operators.
In particular, for weights corresponding to symmetric power representations, we obtain a fully explicit expression (Theorem~\ref{thm:diffsym}).

Using these constructions, we derive an explicit formula for the coefficients appearing in the pullback formula for Hermitian Eisenstein series, as formulated in \cite{Takeda2025pullback} (see Theorem~\ref{thm:c}).

This paper is organized as follows:
Section~2 introduces Hermitian modular forms and the notation used throughout the paper.
Section~3 reviews differential operators on Hermitian modular forms and the conditions they satisfy.
Section~4 studies the space of higher spherical pluriharmonic polynomials and provides explicit bases.
Section~5 constructs concrete differential operators based on these results.
Finally, Section~6 applies these operators to derive the exact pullback formula.

\vskip.5\baselineskip
\paragraph{\textbf{Acknowledgment.}}
The author would like to thank T. Ikeda for his great guidance and support as my supervisor,
and H. Katsurada for his many suggestions and comments.

\vskip.5\baselineskip
\paragraph{\textbf{Notation.}}
We denote by $M_{m,n}(R)$ the set of $m\times n$ matrices with entries in $R$.
In particular, we put $M_n(R) := M_{n,n}(R)$.
Let $I_n$ be the identity element of $M_n(R)$ and $e_{ij}$
the matrix with $1$ at the $(i,j)$-th entry and $0$ elsewhere.
Let $\det(X)$ be the determinant of $X$ and $\tr(X)$ the trace of $X$,
${}^tX$ the transpose of $X$ for a square matrix $x \in M_n(R)$.
Let $\mathrm{GL}_n(R) \subset M_n(R)$ be a general linear group of degree $n$.

Let $K$ be a quadratic extension field of $K_0$ with the non-trivial automorphism $\rho$ of $K$ over $K_0$,
we often put $\overline{x}=\rho(x)$ for $k\in K$.
We put $\overline{X}=(\overline{x_{ij}})$ and $\adj{X}= {}^t\!\overline{X}$ for $X=(x_{ij})\in M_{m,n}(K)$.
Let $\adj{B}$ be the transpose of $\bar{B}$ and $\bar{B}$ the complex conjugate of $B$.

Let $K$ be an algebraic field, and $\mathfrak{p}$ be a prime ideal of K.
We denote by $K_\mathfrak{p}$ a $\mathfrak{p}$-adic completion of $K$
and by $\inte{K}$ the integer ring of $K$.

Let $\herm{n}\subset M_n(K)$ be the set of hermitian matrices. For an element $X \in \herm{n}$,
we denote by $X>0$ (resp. $X \geq 0$) X is a positive definite matrix (resp. a non-negative definite matrix).
For a subset $S \subset \herm{n}$, we denote by $S_{>0}$ (resp. $S_{\geq 0}$)
the subset of positive definite (resp. non-negative definite) matrices in $S$.

If a group $G$ acts on a set $V$ then, we denote by $V^G$ the $G$-invariant
subspace of $V$.

Let $\det^k$ be the 1 dimensional representation of multiplying $k$-square of determinant for $\mathrm{GL}_n(\bbc)$
and $Sym^l$ the $l$-th symmetric power representation of $\mathrm{GL}_n(\bbc)$.

For a representation $(\rho,V)$, we denote by $(\rho^*, V^*)$ the
contragredient representation of $(\rho, V)$.

\section{Hermitian modular forms}
Let $K$ be a quadratic imaginary extension of a totally real field $\kp$.
The set of finite places will be denoted by $\bfh$ and the archimedean one by $\bfa$.
We put $m:=m_{\kp}:=\#\bfa=[\kp:\bbq]$.
We put $K_v=\prod_{w|v} K_w$ and $\mathcal{O}_{K_v}=\prod_{w|v}\mathcal{O}_{K_w}$ for a place $v$ of $\kp$.
Let $\adele=\adele_{\kp}$ be the adele ring of $\kp$, and $\adele_{0}$,
$\adele_{\infty}$ the finite and infinite parts of $\adele$, respectively.

We put $J_n=\begin{pmatrix}O_n & I_n \\ -I_n & O_n \\ \end{pmatrix}$.
The unitary group $\uni{n}$ is an algebraic group defined over $\kp$,
whose $R$-points are given by
\[\uni{n}(R)=\{g\in \mathrm{GL}_{2n}(K\otimes_{\kp}R) \mid \adj{g}J_ng=J_n\}\]
for each $\kp$-algebra $R$.\\ We also define other unitary groups $\rmu(n,n)$
and $\rmu(n)$ by
\begin{align*}
	\rmu(n,n) & =\{g\in \mathrm{GL}_{2n}(\bbc) \mid \adj{g}J_ng=J_n\}, \\
	\rmu(n)   & =\{g\in \mathrm{GL}_{n}(\bbc) \mid \adj{g}g =I_n\}.
\end{align*}
Put $G_n=\rmu_n(\kp)$, $G_{n,v}=\rmu_n(\kp_v)$  for a place $v$ of $\kp$,
$G_{n,\adele}=\rmu_n(\adele)$, $G_{n,0}=\rmu_n(\adele_0)$,
and $G_{n,\infty}=\prod_{\va}G_{n,v}=\prod_{\va}\rmu(n,n)$.

We define $K_{n,v}$ by
\[K_{n,v}=\left\{
	\begin{array}{ll}
		\uni{n}(\inte{\kp_v}) & (v\in\bfh), \\
		\rmu(n)\times \rmu(n) & (\va).
	\end{array}
	\right.\]
Then $K_{n,v}$ is isomorphic to a maximal compact subgroup of $G_{n,v}$.
We fix a maximal compact subgroup of $G_{n,v}$,
which is also denoted by $K_{n,v}$ by abuse of notation.
We put $K_{n,0}=\prod_{v\in\bfh}K_{n,v}$ and $K_{n,\infty}=\prod_{\va }K_{n,v}$.

\subsection{As analytic functions on hermitian symmetric spaces}\

We have the identification
\begin{align*}
	M_n(\bbc) & \cong \herm{n}\otimes_\bbr\bbc  \\
	Z         & \mapsto \Re(Z)+\sqrt{-1}\Im(Z),
\end{align*}
with the hermitian real part $\Re(Z)$ and the imaginary part $\Im(Z)$, i.e.,
\begin{align*}
	\Re(Z) & =\frac{1}{2}(Z+\adj{Z}),          \\
	\Im(Z) & =\frac{1}{2\sqrt{-1}}(Z-\adj{Z}).
\end{align*}

Let $\hus{n}$ be the hermitian upper half space of degree $n$, that is
\[ \hus{n} = \left\{Z \in M_n(\bbc) \mid \Im(Z) >0\right\}.\]

Then $G_{n,\infty}=\prod_{\va}\rmu(n,n)$ acts on $\hus{n}^\bfa$ by
\[g\left<Z\right>=\left((A_vZ_v+B_v)(C_vZ_v+D_v)^{-1}\right)_{\va}\]
for  $g=\begin{pmatrix}A_v& B_v \\C_v & D_v \\\end{pmatrix}_{\va} \in G_{n,\infty}$
and $Z=(Z_v)_{\va}\in\hus{n}^\bfa$.
We put $\bfi_n:=(\sqrt{-1}I_n)_{\va}\in\hus{n}^\bfa$.

Let $(\rho, V )$ be an algebraic representation of
$K_{n,\infty}^\bbc:=\prod_{\va}(\mathrm{GL}_n(\bbc)\times\mathrm{GL}_n(\bbc))$ on a finite dimensional complex vector space $V$,
and take a hermitian inner product on $V$ such that
\[\left<\rho(g)v,w\right>=\left<v,\rho(\adj{g})w\right> \]
for any $g \in K_{n,\infty}^\bbc$.

For  $g=\begin{pmatrix}A_v& B_v \\C_v & D_v \\\end{pmatrix}_{\va} \in G_{n,\infty}$
and $Z=(Z_v)_{\va}\in\hus{n}^\bfa$,
we put
\[\lambda(g,Z)=(C_vZ_v+D_v)_{\va}, \quad \mu(g,Z)=(\overline{C_v}{}^t\!Z_v+\overline{D_v})_{\va},\ \text{ and }\ M(g,Z)=(\lambda(g,Z),\mu(g,Z)).\]
We write
\[\lambda(g)=\lambda(g,\bfi_n),\quad \mu(g)=\mu(g,\bfi_n)\ \text{ and }\ M(g)=M(g,\bfi_n)\]
for short.
For a $V$-valued function $F$ on $\hus{n}^\bfa$, we put
\[ F|_{\rho}[g](Z)=\rho(M(g,Z))^{-1}F(g\left< Z\right>) \quad (g\in G_{n,\infty},\ Z \in \hus{n}^\bfa).\]

We put
\[\Gamma_K^{(n)}(\frakn)=\left\{g=(g_v)_{\va}\in \left(G_{n,\infty}\cap \prod_{\va}\mathrm{GL}_{2n}(\inte{K})\right) \mid g_v\equiv I_{2n} \mod{\frakn \mathcal{O}_K}\right\}\]
for an integral ideal $\frakn$ of $\kp$.
When $\frakn=\inte{\kp}$, we put $\Gamma_K^{(n)}=\Gamma_K^{(n)}(\inte{\kp})$.

\begin{dfn}
	We say that $F$ is a (holomorphic) hermitian modular form of level $\frakn$, and weight $(\rho,V)$
	if $F$ is a holomorphic $V$-valued function on $\hus{n}$ and $F|_{\rho}[g]=F$ for all $g \in \Gamma_K^{(n)}(\frakn)$.
	(If $n=1$ and $\kp=\bbq$, another holomorphy condition at the cusps is also needed.)

	We denote by $M_\rho(\Gamma_K^{(n)}(\frakn))$ a complex vector space of all hermitian modular forms
	of level $\frakn$, and weight $(\rho,V)$.
\end{dfn}

If we put
\[\Lambda_n(\frakn)=\left\{T\in\herm{n}|\tr_{\kp/\bbq}(\tr(TZ))\in \bbz \text{ for any } Z\in \herm{n}\cup M_n(\frakn)  \right\},\]
a modular form $F \in M_\rho(\Gamma_K^{(n)}(\frakn))$ has the Fourier expansion
\[ F(Z)=\sum_{T\in \Lambda_n(\frakn)_{\geq0}}a(F,T)\mathbf{e}(\tr_{\kp/\bbq}(\tr(TZ))),\]
where $a(F,T)\in V$, $\mathbf{e}(z)=\exp(2\pi\sqrt{-1}z)$.
Here, $T_v$ is the image of $T\in \Lambda_n(\frakn)$ by the embedding corresponding to $\va$.
If $a(F, T)=0$ unless $T$ is positive definite,
we say that $F$ is a  (holomorphic) hermitian cusp form of level $\frakn$, and weight $(\rho,V)$.
We also denote by $S_\rho(\Gamma_K^{(n)}(\frakn))$ a complex vector space of all cusp forms of level $\frakn$, and weight $(\rho,V)$.

Write the variable $Z=(X_v+\sqrt{-1}Y_v)_{\va}$ on $\hus{n}^\bfa$ with $X_v,Y_v \in \herm{n}$ for each $\va$. We
identify $\herm{n}$ with $\bbr^{n^2}$ and define measures $dX_v, dY_v$ as the
standard measures on $\bbr^{n^2}$. We define a measure $dZ$ on $\hus{n}^\bfa$ by
\[dZ= \prod_{\va}dX_v dY_v.\]
For $F,G \in M_\rho(\Gamma_K^{(n)}(\frakn))$, we can define the Petersson inner product
as
\[ (F,G)=\int_{D } \left<\rho(Y^{1/2},{}^tY^{1/2})F(Z),\rho(Y^{1/2},{}^tY^{1/2})G(Z)\right>(\prod_{\va}\det(Y_v)^{-2n})dZ,\]
where $Y=(Y_v)_{\va}=\Im(Z)$, $Y^{1/2}=(Y^{1/2}_v)_{\va}$ is a family of positive definite hermitian matrices such that
$(Y_v^{1/2})^2=Y_v$, and $D$ is a Siegel domain on $\hus{n}^\bfa$
for $\Gamma_K^{(n)}(\frakn)$. This integral converges if either $F$ or $G$ is a cusp
form.

We call a sequence of non-negative integers $\bfk=(k_1,k_2,\ldots)$ a dominant integral weight
if $k_i \geq k_{i+1}$ for all $i$, and $k_i=0$ for almost all $i$.
The largest integer $m$ such that $k_m \neq 0$ is called the length of $\bfk$ and denoted by $\ell(\bfk)$.
The set of dominant integral weights with length less than or equal to $n$
corresponds bijectively to the set of irreducible algebraic representations of $\mathrm{GL}_n(\bbc)$.

For a family $(\bfk,\bfl)=(\kv,\lv)_{\va}$ of pairs of dominant integral weights
such that $\ell(\kv)\leq n$ and $\ell(\lv)\leq n$ for any $v\in \bfa$,
we define the representation
$\rho_{n,(\bfk,\bfl)}=\bigboxtimes_{\va}\rho_{n, \kv}\otimes\rho_{n, \lv}$ of $K_{n,\infty}^\bbc$.
We put $\ell(\bfk,\bfl)(\Gamma_K^{(n)}(\frakn))=M_{\rho_{n,(\bfk,\bfl)}}(\Gamma_K^{(n)}(\frakn))$
and $S_{(\bfk,\bfl)}(\Gamma_K^{(n)}(\frakn))=S_{\rho_{n,(\bfk,\bfl)}}(\Gamma_K^{(n)}(\frakn))$.
When $\bfk=(\kappa_v,\ldots,\kappa_v)_{\va}$ and $\bfl=(0,\ldots,0)_{\va}$
for a family $\kappa=(\kappa_v)_{\va}$ of non-negative integers,
we also put $\det^\kappa=\rho_{n,(\bfk,\bfl)}$,
$M_\kappa(\Gamma_K^{(n)}(\frakn))=\ell(\bfk,\bfl)(\Gamma_K^{(n)}(\frakn))$
and $S_\kappa(\Gamma_K^{(n)}(\frakn))=S_{(\bfk,\bfl)}(\Gamma_K^{(n)}(\frakn))$.

\subsection{As unctions on $\rmu(n,n)$}\

Let $K_{n,\infty}$ be the stabilizer of $\bfi_n \in \hus{n}^\bfa$ in
$G_{n,\infty}$. Then, $K_{n,\infty}$ is a maximal compact subgroup of $G_{n,\infty}$
and isomorphic to $\prod_{\va}\rmu(n)\times\rmu(n)$, which is given by
\[\begin{array}{rccc}
		 & \prod_{\va}\rmu(n)\times\rmu(n) & \rightarrow & K_{n,\infty}                                                                                               \\
		 & (k_{1,v},k_{2,v})_{\va}         & \mapsto     & \left(\mathfrak{c}\begin{pmatrix}k_{2,v}&0\\0&{}^tk_{1,v}^{-1}\end{pmatrix}\mathfrak{c}^{-1}\right)_{\va},
	\end{array}\]
where $\mathfrak{c}=\dfrac{1}{\sqrt{2}}\begin{pmatrix}1 & \sqrt{-1}\\ \sqrt{-1} & 1\\ \end{pmatrix}\in M_{2n}(\bbc)$.
Here we are taking this slightly strange isomorphism for Proposition~\ref{prop:automisom}.

We put $\mathfrak{g}_{n,v}=\mathrm{Lie}(G_{n,v})$,
$\mathfrak{k}_{n,v}=\mathrm{Lie}(K_{n,v})$ and let $\mathfrak{g}^\bbc_{n,v}$ and
$\mathfrak{k}^\bbc_{n,v}$ be the complexification of $\mathfrak{g}_{n,v}$ and
$\mathfrak{k}_{n,v}$, respectively.
We have the Cartan decomposition $\mathfrak{g}_{n,v}=\mathfrak{k}_{n,v}\oplus\mathfrak{p}_{n,v}$.
Furthermore, we put
\begin{align*}
	\kappa_{v,ij}  & =\mathfrak{c}\begin{pmatrix}e_{v,ij} & 0\\ 0 & 0\\ \end{pmatrix}\mathfrak{c}^{-1},\quad
	\kappa'_{v,ij}=\mathfrak{c}\begin{pmatrix}0 & 0\\ 0 &e_{v,ij} \\ \end{pmatrix}\mathfrak{c}^{-1},                            \\
	\pi^{+}_{v,ij} & =\mathfrak{c}\begin{pmatrix}0 & e_{v,ij}\\ 0 & 0\\ \end{pmatrix}\mathfrak{c}^{-1},\quad \mathrm{and} \quad
	\pi^{-}_{v,ij}=\mathfrak{c}\begin{pmatrix}0 & 0\\ e_{v,ij} & 0\\ \end{pmatrix}\mathfrak{c}^{-1},
\end{align*}
where
$e_{ij} \in M_{n,n}(\bbc)$ is the matrix whose only non-zero entry is 1 in $(i,j)$-component.
$\{\kappa_{v,ij}\}$ is a basis of $\mathfrak{k}^\bbc_{n,v}$.
Let $\mathfrak{p}^+_{n,v}$ (resp. $\mathfrak{p}^-_{n,v}$) be the $\bbc$-span of
$\{\pi^{+}_{v,ij}\}$ (resp. $\{\pi^{-}_{v,ij}\}$) in $\mathfrak{g}^\bbc_{n,v}$.
And then, put $\mathfrak{g}_{n}=\prod_{\va}\mathfrak{g}_{n,v}$, $\mathfrak{k}^\bbc_{n}=\prod_{\va}\mathfrak{k}^\bbc_{n,v}$, etc.

For a representation $(\rho, U_\rho)$ of $K_{n,\infty}$, we define the representation $(\rho',U_{\rho'}\ (=U_\rho))$ by
$\rho'(g_1,g_2)=\rho({}^tg_1^{-1},{}^tg_2^{-1})$, which is isomorphic to $\rho^*$.

\begin{dfn}
	Let $(\rho, U_\rho)$ be an irreducible unitary representation of $K_{n,\infty}$
	and $\Gamma_{n}$ a discrete subgroup of $G_{n}$. We embed $\Gamma_{n}$ diagonally into $G_{n,\infty}$ and consider it as the subgroup of $G_{n,\infty}$.
	Then, a hermitian modular form of type $\rho$ for $\Gamma_{n}$ is
	a $U_{\rho'}$-valued $C^\infty$-function $\phi$ on $G_{n,\infty}$ which satisfies the following conditions:
	\begin{enumerate}
		\item $\phi(\gamma gk)=\rho'(k)^{-1}\phi(g)$ for $k\in K_{n,\infty}$ and $\gamma\in \Gamma_{n}$,
		\item $\phi$ is annihilated by the right derivation of $\mathfrak{p}_{n}^-$,
		\item $\phi$ is of moderate growth.
	\end{enumerate}
\end{dfn}

We denote the space of moderate growth $C^\infty$-functions on $G_{n,\infty}$
which are invariant under left translation by $\Gamma_{n}$ by $C_\mathrm{mod}^\infty(\Gamma_{n}\backslash G_{n,\infty})$
and the space consisting of all hermitian modular forms of type $\rho$ for $\Gamma_n$
by $\left[C_\mathrm{mod}^\infty(\Gamma_{n}\backslash G_{n,\infty})\otimes U_\rho^*\right]^{K_{n,\infty},\mathfrak{p}_{n}^-=0}$.

For $f \in M_{\rho}(\Gamma_K^{(n)})$, we define a $U_{\rho}$-valued $C^\infty$-function $\phi_f$ on $G_{n,\infty}$ by
\[\phi_f(g)=(f|_{\rho} g )(\sqrt{-1})=\rho(M(g))^{-1}f(g\left<\bfi_n\right>)\]
for $g\in G_{n,v}$. Then, we have the following proposition.

\begin{prop}[e.g. \cite{Eischen2024Automorphic}]\label{prop:automisom}
	The above correspondence $f\mapsto \phi_f$ gives the isomorphism
	\[M_\rho(\Gamma_K^{(n)})\overset{\sim}{\longrightarrow}
		\left[C_\mathrm{mod}^\infty(\Gamma_K^{(n)}\backslash G_{n,v})\otimes U_{\rho'}\right]^{K_{n,\infty},\mathfrak{p}_{n}^-=0}.\]
\end{prop}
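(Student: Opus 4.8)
The plan is to write down the forward map $f\mapsto\phi_f$ together with its inverse, verify that each map lands in the asserted space, and check that the two are mutually inverse; the only genuinely analytic point is the equivalence between holomorphy of $f$ and annihilation of $\phi_f$ by the right action of $\mathfrak{p}_n^-$, and everything else is the standard automorphic-forms dictionary adapted to $\rmu(n,n)$.

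First I would verify that $\phi_f$, defined by $\phi_f(g)=\rho(M(g))^{-1}f(g\langle\bfi_n\rangle)$, lies in the target space. The cocycle identity $M(g_1g_2,Z)=M(g_1,g_2\langle Z\rangle)\,M(g_2,Z)$, holding separately in the $\lambda$ and $\mu$ components, combined with $f|_\rho[\gamma]=f$ for $\gamma\in\Gamma_K^{(n)}$, gives left $\Gamma_K^{(n)}$-invariance of $\phi_f$. For right $K_{n,\infty}$-equivariance I use that every $k\in K_{n,\infty}$ fixes $\bfi_n$, so that the cocycle gives $M(gk)=M(g)\,M(k)$ and hence $\phi_f(gk)=\rho(M(k))^{-1}\phi_f(g)$; one then computes $M(k)=(\lambda(k),\mu(k))$ for $k$ in the image of the explicit isomorphism $\prod_\va(\rmu(n)\times\rmu(n))\to K_{n,\infty}$, and the particular ``strange'' isomorphism fixed in the paper is precisely the one arranged so that $M(k)$ acts through $\rho$ as $k$ acts through $\rho'$, yielding $\phi_f(gk)=\rho'(k)^{-1}\phi_f(g)$. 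Moderate growth of $\phi_f$ follows from the Fourier expansion of $f$ together with Koecher's principle for $n\geq 2$, or from the holomorphy-at-cusps hypothesis in the excluded case $n=1$, $\kp=\bbq$.

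The main obstacle is the holomorphy condition. Here I would use the identification $G_{n,\infty}/K_{n,\infty}\cong\hus{n}^\bfa$, under which $\mathfrak{p}^+_n$ and $\mathfrak{p}^-_n$ are the holomorphic and antiholomorphic parts of the complexified tangent space at $\bfi_n$. Concretely, I would compute the right-invariant differential operators attached to the basis vectors $\pi^-_{v,ij}$ acting on $\phi_f$, transport them through the automorphy factor $\rho(M(g,Z))^{-1}$, and show that after this untwisting they agree, up to an invertible factor, with the Wirtinger operators $\partial/\partial\overline{Z_{v,ij}}$ applied to $f$. The key cancellation is that $M(g,Z)$ depends holomorphically on $Z$ in the relevant sense, so $\rho(M(g,Z))^{-1}$ is itself annihilated by the $\mathfrak{p}_n^-$-directions; it follows that the right $\mathfrak{p}_n^-$-derivative of $\phi_f$ vanishes exactly when $\overline\partial f=0$, i.e. when $f$ is holomorphic. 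This is the computation already carried out in the scalar Siegel case, here run verbatim with the two $\mathrm{GL}_n$-factors recording $\lambda$ and $\mu$ separately.

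For the inverse, given $\phi$ in the target space I set $f(Z):=\rho(M(g))\,\phi(g)$ for any $g$ with $g\langle\bfi_n\rangle=Z$, using $M(g)=M(g,\bfi_n)$. Well-definedness holds because the fibres of $g\mapsto g\langle\bfi_n\rangle$ are the right $K_{n,\infty}$-cosets, and the $K_{n,\infty}$-equivariance of $\phi$ together with the computation of $M(k)$ above makes the expression independent of the representative. The $\mathfrak{p}_n^-$-annihilation of $\phi$ then yields $\overline\partial f=0$ by running the untwisting computation in reverse, so $f$ is holomorphic; left $\Gamma_K^{(n)}$-invariance of $\phi$ gives $f|_\rho[\gamma]=f$; and moderate growth of $\phi$ translates into the required growth condition on $f$. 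Finally both composites reduce to the definitions and the identity $M(g,\bfi_n)=M(g)$, so the two maps are mutually inverse, completing the proof.
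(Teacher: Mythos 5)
Your proof is correct and follows the standard automorphic-dictionary argument; the paper itself gives no proof of Proposition~\ref{prop:automisom}, delegating it to the literature via the citation \cite{Eischen2024Automorphic}, and your outline (cocycle identity for $M(g,Z)$, equivariance via the stabilizer computation, holomorphy of the automorphy factor in $Z$ forcing the untwisted $\mathfrak{p}_n^-$-derivative to reduce to $\overline{\partial}f$, and Koecher's principle for the growth conditions) is essentially the argument found there. In particular your key equivariance step checks out: for $k$ corresponding to $(k_{1,v},k_{2,v})_\va\in\prod_\va(\rmu(n)\times\rmu(n))$ under the paper's ``strange'' isomorphism, a direct computation with $\mathfrak{c}$ gives $\lambda(k)={}^tk_1^{-1}$ and $\mu(k)=\overline{k_2}={}^tk_2^{-1}$, so that $\rho(M(k))=\rho'(k)$ exactly as you asserted, which is precisely why the paper fixes that isomorphism.
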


\subsection{As functions on unitary groups over the adeles}\

There is a unique compact open subgroup $K_{n,0}(\frakn)$ of $G_{n,0}$ such that
\[\Gamma_K^{(n)}(\frakn)=G_n\cap K_{n,0}(\frakn)K_{n,\infty}\]
for an integral ideal $\frakn$ of $\kp$.
We put $K_{n,v}(\frakn)=K_{n,0}(\frakn)_v$.
We remark that $K_{n,v}(\frakn)=K_{n,v}$ for a finite place $v\nmid\frakn$.

\begin{dfn}
	A hermitian automorphic form on $G_{n,\adele}$ of level $\frakn$, and weight $(\rho,V)$ is defined to be a $V$-valued smooth function $f$ on $G_{n,\adele}$
	such that left $G_{n}$-invariant, right $K_{n,0}(\frakn)$-invariant, right-equivariant under the action of $(K_{n,\infty}, \rho)$,
	of moderate growth, and $Z(\mathfrak{g})$-invariant, where $Z(\mathfrak{g})$ denotes the center of the complexified Lie algebra  $\mathfrak{g}$ of $\rmu(n,n)$.

	We denote by $\mathcal{A}_n(\rho,\frakn)$ the complex vector space of hermitian automorphic forms on $G_{n,\adele}$ of weight $\rho$.
\end{dfn}

\begin{dfn}
	A hermitian automorphic form $f\in\mathcal{A}_n(\rho,\frakn)$ is called a cusp form if
	\[\int_{N(\kp)\backslash N(\adele)}f(ng)dn=0\]
	for any $g \in G_{n,\adele}$ and any unipotent radical $N$ of each proper
	parabolic subgroup of $\rmu_n$.

	We denote by $\mathcal{A}_{0,n}(\rho,\frakn)$ the complex vector space of cusp
	forms on $G_{n,\adele}$ of weight $\rho$.
\end{dfn}

We put
\[ g_Z =\begin{pmatrix}
		Y^{1/2} & XY^{-1/2} \\ 0_n& Y^{-1/2}
	\end{pmatrix}\in G_{n,\infty}\]
for $Z=X+\sqrt{-1}Y\in\hus{n}^\bfa$.
For $f\in \mathcal{A}_n(\rho,\frakn)$, we define a function $\hat{f}$ on
$\hus{n}^\bfa$ by
\[\hat{f}(Z)=\rho(M(g_z))f(g_z).\]
Then, we have $\hat{f}\in M_\rho(\Gamma_K^{(n)}(\frakn))$.
Moreover, if $f \in	\mathcal{A}_{0,n}(\rho,\frakn)$,
then we have $\hat{f}\in S_\rho(\Gamma_K^{(n)}(\frakn))$.

For $v\in \bfh$, we take the Haar measure $dg_v$ on $G_{n,v}$ normalized so that the volume of $K_{n,v}$ is 1.
For $v\in \bfa$, we take the Haar measure $dg_v$ on $G_{n,v}$ such that the volume of $K_{n,v}$ is 1
and the Haar measure on $\hus{n}\cong G_{n,v}/K_{n,v}$ induced from $dg_v$ is $(\det Y_v)^{-2n}dZ_v$.
Using these, we fix the Haar measure $dg=\prod_vdg_v$ on $G_{n,\adele}$.
We define the Petersson inner product on $\mathcal{A}_n(\rho,\frakn)$ as
\[ (f,h)=\int_{G_{n}\backslash G_{n,\adele}} \left<f(g),h(g)\right>dg,\]
for $f,h \in \mathcal{A}_n(\rho,\frakn)$, where $dg$ is a Haar measure on $G_{n}\backslash G_{n,\adele}$
induced from that on $G_{n,\adele}$.

For a finite place $v\in\bfh$ such that corresponds to a prime ideal $\mathfrak{p}$ of $\kp$,
let $\mathcal{H}_{n,\mathfrak{p}}$ be the convolution algebra of
left and right $K_{n,v}$-invariant compactly supported $\bbq$-valued functions of $G_{n,v}$,
which is called the spherical Hecke algebra at $\mathfrak{p}$.
The spherical Hecke algebra $\mathcal{H}_{n,\mathfrak{p}}$ at $v$ acts
on the set of continuous right $K_{n,v}$-invariant functions on $G_{n,v}$ (or on $G_{n,\adele}$) by right convolution,
i.e., for a continuous right $K_{n,v}$-invariant function $f$ on $G_{n,v}$ (or on $G_{n,\adele}$) and $\eta\in\mathcal{H}_{n,\mathfrak{p}}$, we put
\[(\eta\cdot f)(g) =\int_{G_{n,v}}f(gh^{-1})\eta(h)dh,\]
where $dh$ is a Haar measure on $G_{n,v}$ normalized so that the volume of $K_{n,v}$ is 1.

\begin{dfn}
	We say that a continuous right $K_{n,v}$-invariant function $f$ on $G_{n,v}$ (or on $G_{n,\adele}$) is a $\mathfrak{p}$-Hecke eigenfunction
	if $f$ is an eigenfunction under the action of $\mathcal{H}_{n,\mathfrak{p}}$.
\end{dfn}

\begin{dfn}
	We say that a hermitian automorphic form $f\in\mathcal{A}_n(\rho,\frakn)$ is a Hecke eigenform
	if $f$ is a $\mathfrak{p}$-Hecke eigenfunction for any $\mathfrak{p}$ not dividing $\frakn$.
\end{dfn}

\section{Review of the differential operators on hermitian modular forms}

Let $n_1,\ldots,n_d$ be positive integers such that $n_1\geq\cdots\geq n_d\geq 1$
and put $n = n_1+\cdots+n_d \geq 2$. We embed $\hus{n_1}^\bfa \times\cdots\times \hus{n_d}^\bfa$ in $\hus{n}^\bfa$ and
$G_{(n_1)}\times\cdots\times G_{(n_d)}$ in $G_{(n)}$ diagonally.

Let $(\rho_s,V_s)$ be a representation of $K_{(n_s)}^\bbc$ for $s=1\ldots,d$,
and $\kappa=(\kappa_v)_{\va}$ a family of positive integers.

We will consider $V := V_{1}\otimes\cdots\otimes V_{d}$-valued differential operators $\mathbb{D}$
on scalar-valued functions of $\hus{n}^\bfa$, satisfying Condition (A) below:

\begin{cond}
	For any modular forms $F\in M_\kappa(\Gamma_K^{(n)})$, we have
	\[\mathrm{Res}(\mathbb{D}(F))\in \bigotimes_{i=1}^d M_{\det^\kappa\rho_{n_i}}(\Gamma_K^{(n_i)}),\]
	where $\mathrm{Res}$ means the restriction of a function on $\hus{n}^\bfa$ to
	$\hus{n_1}^\bfa \times\cdots\times \hus{n_d}^\bfa$.
\end{cond}

This Condition (A) corresponds to Case (I) in \cite{ibukiyama1999differential},
and the differential operators constructed for several vector-valued cases in \cite{Browning2024Constructing}.
The interpretation in the representation theory of $\mathbb{D}$ is done in \cite{Takeda2025pullback}.
Case (II) in \cite{ibukiyama1999differential} for hermitian modular forms is discussed by Ban \cite{Dunn2024Rankin}.
In the following, we recall the results in \cite{Takeda2025pullback}.

We put $\partial_Z=\left(\frac{\partial}{\partial Z_{ij}}\right)$.
Let $P_v(X)$ be a vector-valued polynomial on a space $M_n$ of degree $n$ variable matrices.
We will give the equivalent condition that the differential operator
$\mathbb{D}=P(\partial_{Z})=\prod_{\va}P_v(\partial_{Z_v})$ satisfies the Condition (A).
Let $L_{n,\kappa}=\bbc[X,Y]$ be the space of polynomials
in the entries of $(n,\kappa)$-matrices $X=(x_{ij})$ and $Y=(y_{ij})$ over $\mathbb{C}$.

\begin{dfn}
	We put
	\[\Delta_{ij}=\sum_{\nu=1}^{\kappa}\frac{\partial^2 }{\partial X_{i,\nu} \partial Y_{j,\nu}}\]
	and call this ``mixed Laplacian'' as in \cite{Ibukiyama2014Higher}.
	If a polynomial $f(X, Y) \in L_{n,\kappa}$ satisfies
	\[\Delta_{ij}f:=\sum_{s=1}^\kappa \frac{\partial^2f}{\partial X_{i,s}\partial Y_{j,s}}=0 \text{  for any  } i,j\in\{1,\ldots,n\}, \]
	we say that $f(X,Y)$ is pluriharmonic polynomial for $\rmu(\kappa)$.\\ We denote by
	$\calp_{n,\kappa}$ the set of all pluriharmonic polynomials for $\rmu(\kappa)$ in $L_{n,\kappa}$.
\end{dfn}

\begin{prop}[Corollary~3.21 of \cite{Takeda2025pullback}]\label{prop:diff}
	Let $n_1,\ldots,n_d$ be positive integers such that $n_1\geq\cdots\geq n_d\geq 1$
	and put $n = n_1+\cdots+n_d$.
	We take a family $(\bfk_s, \bfl_s)=(\bfk_{s,v},\bfl_{s,v})_{\va}$ of pairs of dominant integral weights such that $\ell(\bfk_{s,v})\leq n_d$, $\ell(\bfl_{s,v})\leq n_d$
	and $\ell(\bfk_{s,v})+\ell(\bfl_{s,v})\leq\kappa_v$ for each $\va$ and $s=1,\ldots,d$.

	Let $P_v(T)$ be a $\left(V_{n_1,\bfk_{v,1},\bfl_{v,1}}\otimes\cdots\otimes V_{n_d,\bfk_{v,d},\bf_{v,d}}\right)$-valued polynomial on a space of degree $n$ variable matrices $M_n$ for $\va$.
	the differential operator $\mathbb{D}=P(\partial_{Z})=\prod_{\va}P_v(\partial_{Z_v})$ satisfies the Condition (A) for $\det^\kappa$
	and $\det^\kappa\rho_{n_1,\bfk_{1},\bfl_1}\otimes\cdots\otimes\det^\kappa\rho_{n_d,\bfk_d,\bfl_d}$ if and only if
	any $P_v(T)$ satisfy the following conditions:
	\begin{enumerate}
		\item If we put $\widetilde{P_v}(X_1,\ldots,X_d,Y_1,\ldots,Y_d)=P_v\left(
			      \begin{pmatrix}
					      X_1\,^tY_1 & \cdots & X_1\,^tY_d \\
					      \vdots     & \ddots & \vdots     \\
					      X_d\,^tY_1 & \cdots & X_d\,^tY_d
				      \end{pmatrix}\right)$ with $X_i, Y_i \in M_{n_i,\kappa_v}$,
		      then $\widetilde{P_v}$ is pluriharmonic for each $(X_i, Y_i)$.
		\item For $(A_i,B_i) \in K_{n_i,v}^\bbc:=\mathrm{GL}_{n_i}(\bbc)\times\mathrm{GL}_{n_i}(\bbc)$,
		      we have
		      \[P_v\left(
			      \begin{pmatrix} A_1 & & \\&\ddots&\\ & &  A_d\end{pmatrix}
			      T
			      \begin{pmatrix} ^t\!B_1 & & \\&\ddots&\\ & &  ^t\!B_d\end{pmatrix}
			      \right)
			      =\left(\rho_{n_1,\bfk_{1,v},\bfl_{1,v}}(A_1,B_1)\cdots\otimes\rho_{n_d,\bfk_{d,v},\bfl_{d,v}}(A_d,B_d)\right)P_v(T).\]
	\end{enumerate}
\end{prop}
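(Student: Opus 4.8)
The plan is to reduce Condition (A) to the two stated algebraic conditions on each local factor $P_v$. Because both $\mathbb{D} = \prod_{\va} P_v(\partial_{Z_v})$ and the target weight factor as products over the archimedean places, and the embedding $\hus{n_1}^\bfa \times \cdots \times \hus{n_d}^\bfa \hookrightarrow \hus{n}^\bfa$ is the placewise block-diagonal inclusion, it suffices to fix one $v \in \bfa$ and work over $\rmu(n,n)$ at that place; I will suppress $v$ from the notation. The essential point is that one must differentiate in the full variable $Z \in M_n(\bbc)$ \emph{before} restricting to block-diagonal $Z = \mathrm{diag}(Z_1, \ldots, Z_d)$, since $F$ is modular on the big group $\Gamma_K^{(n)}$ but not on the product of the smaller ones embedded block-diagonally.

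First I would write out the transformation law to be verified. For block-diagonal $g = \mathrm{diag}(g_1, \ldots, g_d)$ with $g_i \in \Gamma_K^{(n_i)}$, modularity of $F$ of weight $\det^\kappa$ gives $F(g\langle Z\rangle) = \det(\lambda(g,Z))^\kappa F(Z)$, while Condition (A) asks that $\mathrm{Res}(\mathbb{D}(F))$ transform on the $i$-th factor by $\det^\kappa\,\rho_{n_i,\bfk_i,\bfl_i}(\lambda(g_i,Z_i), \mu(g_i,Z_i))$. Applying $P(\partial_Z)$ to both sides and invoking a chain rule of the form $dW = {}^t\mu(g,Z)^{-1}\, dZ\, \lambda(g,Z)^{-1}$ for $W = g\langle Z\rangle$ — note that both $\lambda$ and $\mu$ are holomorphic in $Z$, which is precisely what produces the two separate holomorphic and anti-holomorphic $\mathrm{GL}_{n_i}$-weights $(\bfk_i, \bfl_i)$ — splits the result into a main term, where every derivative falls on $F$, and error terms, where at least one derivative falls on the automorphy factor $\det(\lambda)^\kappa$. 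Condition (A) then holds if and only if the error terms vanish after restriction to block-diagonal $Z$.

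The equivariance condition (2) is the more direct half. The main term reproduces $\mathbb{D}(F)$ composed with the Jacobian factors $\lambda, \mu$, and requiring that this transform under the block-diagonal $\prod_i(\mathrm{GL}_{n_i}(\bbc) \times \mathrm{GL}_{n_i}(\bbc))$ exactly by $\bigotimes_i \rho_{n_i,\bfk_i,\bfl_i}$ forces the identity $P(\mathrm{diag}(A_i)\, T\, \mathrm{diag}({}^tB_i)) = (\bigotimes_i \rho_{n_i,\bfk_i,\bfl_i}(A_i,B_i))\, P(T)$, since under the identification $\mathfrak{p}^+_n \cong M_n(\bbc)$ the blocks $\mathrm{diag}(A_i)$ and $\mathrm{diag}({}^tB_i)$ act on the left and right respectively. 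Substituting $T = X\,{}^tY$ shows this is compatible with $X \mapsto \mathrm{diag}(A_i)X$ and $Y \mapsto \mathrm{diag}(B_i)Y$.

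The harmonicity condition (1) is the crux, and I expect it to be the main obstacle. One must show that vanishing of the error terms upon restriction to block-diagonal $Z$ is equivalent to pluriharmonicity of $\widetilde{P}(X_1,\ldots,Y_d) = P\big((X_i\,{}^tY_j)_{i,j}\big)$ in each pair $(X_i, Y_i)$. The device is to linearize the weight-$\kappa$ automorphy factor $\det(\lambda)^\kappa$ using the auxiliary $(n_i,\kappa)$-matrices $X_i, Y_i$: differentiating the factorization $T = X\,{}^tY$ converts the second-order error terms produced when a derivative strikes $\det(\lambda)^\kappa$ into the mixed Laplacians $\Delta_{ab} = \sum_s \partial^2/\partial (X_i)_{a,s}\,\partial (Y_i)_{b,s}$ acting on $\widetilde{P}$, so that error-term vanishing becomes exactly $\Delta_{ab}\widetilde{P} = 0$ for each block $i$. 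Carrying out this dictionary precisely — tracking the chain-rule error terms, matching them term by term with the mixed Laplacians, and verifying that the length hypotheses $\ell(\bfk_{s,v}) + \ell(\bfl_{s,v}) \leq \kappa_v$ guarantee that the substitution $P \mapsto \widetilde{P}$ loses no information (equivalently, that the target representations $\rho_{n_i,\bfk_i,\bfl_i}$ genuinely occur among the $\rmu(\kappa)$-pluriharmonics, so the correspondence is a bijection) — is where the real work lies. Once both implications are in hand, conditions (1) and (2) together are equivalent to Condition (A).
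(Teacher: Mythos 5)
First, note that the paper does not actually prove this proposition here: it is quoted verbatim as Corollary~3.21 of \cite{Takeda2025pullback}, so there is no in-text argument to compare yours against. Judged on its own, your proposal has the right architecture --- reduce to one archimedean place, use the chain rule $dW={}^t\mu(g,Z)^{-1}\,dZ\,\lambda(g,Z)^{-1}$ for $W=g\langle Z\rangle$ to split $P(\partial_Z)$ applied to the transformation law into a main term and error terms, read off the equivariance (2) from the main term, and convert the error terms into the mixed Laplacians $\Delta_{ab}$ via the substitution $T=X\,{}^tY$. This is the standard Ibukiyama-type strategy, correctly adapted to the Hermitian setting where the automorphy factor has the two holomorphic/anti-holomorphic components $\lambda$ and $\mu$.

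However, as written this is a plan rather than a proof, and the gap sits exactly where you place it. The sentence ``carrying out this dictionary precisely \ldots{} is where the real work lies'' defers the entire content of the equivalence: you never exhibit the error terms, never show they organize themselves into $\kappa\partial_{ij}+\sum_{k,l}t_{kl}\partial_{il}\partial_{kj}=D^{(\kappa)}_{ij}$ restricted to the index set $I(\bfn)$ (i.e.\ only the diagonal blocks, which is why harmonicity is required only in each pair $(X_i,Y_i)$ and not in mixed pairs), and never verify that these are the \emph{only} obstructions after restriction to block-diagonal $Z$. This is nontrivial because for a block-diagonal $\gamma_i$ the matrix $C Z+D$ is \emph{not} block-diagonal before restriction, so the bookkeeping of which second-order terms survive $\mathrm{Res}$ is precisely the crux. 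Moreover the ``only if'' direction is not addressed at all beyond the phrase ``once both implications are in hand'': to conclude that Condition~(A), which quantifies over modular forms $F$, forces the polynomial identities (1) and (2), you need a sufficiently rich family of test functions (e.g.\ exponentials $\mathbf{e}(\tr(TZ))$ with $T$ of rank $\le\kappa_v$, or the functions $\det(CZ+D)^{-\kappa}$ and their translates), and this is where the hypothesis $\ell(\bfk_{s,v})+\ell(\bfl_{s,v})\le\kappa_v$ actually enters; your proposal gestures at this but does not supply the argument. Both halves would need to be written out (or the result cited, as the paper does) before this counts as a proof.
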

\begin{rem}
	For a hermitian modular form as a function on $\rmu(n,n)$,
	We can see that $P(\pi^+)=\prod_{\va}P(\pi^+_v)$ for $P(T)$ in the above proposition satisfies the same condition as condition (A). Here $\pi^+_v=(\pi^+_{v,ij})$ is a matrix of the element in $\mathfrak{p}^+_{n,v}$.
	(See \cite{Takeda2025pullback} for details.)
\end{rem}

Considering each place, we construct a polynomial $P(T)$ satisfying conditions (1) and (2) of the above proposition in concrete form below.
Therefore, the following discussion will not write $v$ for the places.

\section{Higher spherical pluriharmonic functions}

Let $n, \kappa$ be non-negative integers.
For a polynomial $P(T)\in \bbc[T]$ in the entry of $(n,n)$-matrix $T=(t_{ij})$ over $\mathbb{C}$,
we define the polynomial $\widetilde{P}(X,Y)\in L_{n,\kappa}$ in the entries of $(n,\kappa)$-matrices $X=(x_{ij})$ and $Y=(y_{ij})$ over $\mathbb{C}$ as
\begin{equation}\label{eq:tilde}
	\widetilde{P}(X,Y)=P(X^t\!Y).
\end{equation}

We rephrase the condition (1) of the Proposition~\ref{prop:diff} for $\widetilde{P}(X,Y)$ as the condition for $P(T)$.
Since $t_{ij}=\sum_{s=1}^\kappa X_{i,s}Y_{j,s}$, we have
\[\frac{\partial \widetilde{P}}{\partial x_{i,\nu}}
	=\sum_{l=1}^n y_{l,\nu}\widetilde{\frac{\partial P}{\partial t_{i,l}}}.\]
Differentiating once more, we obtain
\[\frac{\partial^2 \widetilde{P}}{\partial x_{i,\nu} \partial y_{j,\nu}}
	=\left(\frac{\partial P}{\partial t_{ij}}\right)^\sim + \sum_{k,l=1}^n x_{k,\nu}y_{l,\nu}\left(\frac{\partial^2 P}{\partial t_{i,l}\partial t_{k,j}}\right)^\sim,\]
and summing this over $\nu$ gives
\[\Delta_{ij}\widetilde{P}:=\sum_{\nu=1}^{\kappa}\frac{\partial^2 \widetilde{P}}{\partial x_{i,\nu} \partial y_{j,\nu}}
	=(D_{ij}P)^\sim.\]
where we put
\begin{equation}\label{eq:Dij}
	D_{ij}=D^{(\kappa)}_{ij}=\kappa\partial_{ij}+\sum_{k,l=1}^n t_{k,l}\partial_{i,l}\partial_{k,j}
\end{equation}
and call this also ``mixed Laplacian''.

When thinking in terms of $\Delta_{ij}$, $\kappa$ had to be a non-negative integer,
but by thinking in $D^{(\kappa)}_{ij}$, $\kappa$ can be considered a complex variable.

Let $\bfn=(n_1,n_2,\ldots,n_d)$ be a partition of $n$ such that $n_1\geq n_2\geq \cdots\geq n_d\geq 1$.
We put
\[I(\bfn)_s=\left\{(i,j)\in\bbz^2\mid n_1+n_2+\cdots+n_{s-1}+1\leq i,j \leq n_1+\cdots n_s\right\}\]
for $s=1,\ldots,d$ and
\[I(\bfn)=\bigcup_{s=1}^dI(\bfn)_s.\]
For any $\kappa\in\bbc$, we put
\begin{align*}
	\calp^{(n)}(\kappa) & =\left\{ P(T) \in\bbc[T] \mid D^{(\kappa)}_{i,i}P=0 \text{ for all } i \right\},               \\
	\calp^\bfn(\kappa)  & =\left\{ P(T) \in\bbc[T] \mid D^{(\kappa)}_{ij}P=0 \text{ for all } (i,j)\in I(\bfn) \right\}.
\end{align*}

We call the elements of $\calp^\bfn(\kappa)$ ``higher spherical pluriharmonic polynomials for the partition $\bfn$''.
Let $\mathcal{D}=(D_{ij})_{1\leq i,j\leq n}$ denote the $n\times n$ matrix of differential operators.
For $A=(a_{ij}), B=(b_{ij})\in \mathrm{GL}_n(\bbc)$, we denote by $(A\mathcal{D}^t\!B)_{ij}$ the $(i,j)$-th entry of the matrix $A\mathcal{D}^t\!B$,
that is, $(A\mathcal{D}^t\!B)_{ij}=\sum_{s,t=1}^n a_{i,s}D_{st}b_{jt}$.

Let $(\rho,V)$ be an algebraic representation of $\gn\times\gn$,
where we put $\gn=\mathrm{GL}_{n_1}(\bbc)\times\cdots\times\mathrm{GL}_{n_d}(\bbc)\subset \mathrm{GL}_n(\bbc)$.
\begin{dfn}
	We denote by $\calp^\bfn_\rho(\kappa)$ the space of $V$-valued polynomials $P_V(T)$ which
	satisfy the following two conditions.
	\begin{enumerate}
		\item Any components of $P_V(T)$ are in $\calp^\bfn(\kappa)$.
		\item For any $g_1,g_2\in \gn$, we have $P_V(g_1T^t\!g_2)=\rho(g_1,g_2)P_V(T)$.
	\end{enumerate}
\end{dfn}
Note that the conditions in the definition of  $\calp^\bfn_\rho(\kappa)$
correspond to those of Proposition~\ref{prop:diff} when $\kappa$ is non-negative integer.

\begin{dfnprop}
	$\gn\times\gn$ acts on $\calp^\bfn(\kappa)$ by
	\[(g_1,g_2)P(T)=P(^t\!g_1Tg_2)\]
	for $g_1,g_2\in \gn$ and $P(T)\in \calp^\bfn(\kappa)$.
	This action is well-defined and $\calp^\bfn(\kappa)$ becomes a representation of $\gn \times \gn$.
\end{dfnprop}
\begin{proof}
	It can be computed in the same way as \cite[Proposition~2.4]{Ibukiyama2020generic}.
	We only note here that  we have
	\[D_{ij}(P(^t\!ATB))=((AD^t\!B)_{ij}P)(^t\!ATB)\]
	for any $P(T)\in \bbc[T]$ and $A,B\in \mathrm{GL}_n(\bbc)$.
\end{proof}

\begin{rem}
	Let $(\rho',V')$ be a representation of $\gn\times\gn$ defined by
	\[\rho'(g_1,g_2)=\rho({}^tg_1,{}^tg_2)^{-1}.\]
	Then, the space $\calp^\bfn_\rho(\kappa)$ can be identified with
	\[\calp^\bfn_\rho(\kappa)
		=\left(\calp^\bfn(\kappa)\otimes V'\right)^{\gn\times\gn}.\]
\end{rem}

\subsection{Inner product and Operators on $\bbc[T]$}
First, we introduce a hermitian inner product on $\bbc[T]$ by
\[\left(P,Q\right)_\kappa=c_n(\kappa)\int_{{\herm{n}}_{>0}}\exp\left(-\frac{\tr(T)}{2}\right)P(T)\overline{Q(T)}\det(T)^{\kappa-n}dT,\]
where $c_n(\kappa)=2^{-\frac{n\kappa}{2}}\pi^{-\frac{n(n-1)}{2}}\left(\prod_{i=0}^{n-1}\Gamma(\kappa-i)\right)^{-1}$
and $dT$ is the Lebesgue measure on $\herm{n}\cong \bbr^{n^2}$.
If $\kappa$ is a positive integer $\geq n$, this integral is equal to
\[(2\pi)^{-n\kappa}\int_{M_{n,\kappa}(\bbc)}\exp\left(-\frac{\tr(XX^*)}{2}\right)P(XX^*)\overline{Q(XX^*)}dX,\]
where $dX$ is the Lebesgue measure on $M_{n,\kappa}(\bbc)$.
We can see that this inner product is absolutely convergent when $\Re(\kappa)>n-1$, and $(1,1)_\kappa=1$.
For proofs of these properties, see, for example,  \cite[Lemma~3.1]{Takeda2025pullback} and \cite[\S3]{Ibukiyama2014Higher}.

If we put $\bfe_\kappa(P)=(P,1)_\kappa$, we can write $\left(P,Q\right)_\kappa=\bfe_\kappa(P\cdot \theta Q)$,
where $\theta Q=\sum_\nu \overline{C_\nu} (^t T)^\nu$ for $Q(T)=\sum_\nu C_\nu T^\nu$.

Let us return to the very first situation and consider the case when $\kappa$ is an integer $\geq n$.
We define the operator on $L_{n,\kappa}$ as follows:
\begin{align*}
	E_{ij}  & =\sum_{s=1}^\kappa x_{i,s}\frac{\partial }{\partial x_{j,s}}+\frac{\kappa}{2}\delta_{ij}, \\
	E'_{ij} & =\sum_{s=1}^\kappa y_{i,s}\frac{\partial }{\partial y_{j,s}}+\frac{\kappa}{2}\delta_{ij}, \\
	F_{ij}  & =\sum_{s=1}^\kappa x_{i,s}y_{j,s}.
\end{align*}
\begin{rem}
	Under the complexified Lie algebra, these operators generate a Lie algebra isomorphic to $\mathfrak{gl}_{2n}(\bbc)$ via the Weil representation (see \cite[Definition~3.10]{Takeda2025pullback}).
	In particular, these operators $\left<D_{ij},E_{ij},E'_{ij},F_{ij} \mid 1\leq i,j\leq n\right>_\bbc$ form a Lie algebra isomorphic to $\mathrm{Lie}(\rmu(n,n))$.
\end{rem}

By the identification \eqref{eq:tilde} defined earlier, the corresponding operators on $\bbc[T]$ are given as:
\begin{align*}
	E_{ij}  & \mapsto\sum_{s=1}^n t_{i,s}\frac{\partial }{\partial t_{j,s}}+\frac{\kappa}{2}\delta_{ij}, \\
	E'_{ij} & \mapsto\sum_{s=1}^n t_{s,i}\frac{\partial }{\partial t_{s,j}}+\frac{\kappa}{2}\delta_{ij}, \\
	F_{ij}  & \mapsto t_{ij},
\end{align*}
where $\delta_{ij}$ is the Kronecker delta function.
These corresponding operators on $\bbc[T]$ will also be denoted by the same notations.

\begin{prop}\label{prop:Ekappa}
	For any $\kappa\in\bbc$, $1\leq i,j \leq n$ and $P(T)\in \bbc[T]$, we have
	\[\bfe_\kappa(F_{ij}P)=\bfe_\kappa((E_{ij}+\frac{\kappa}{2}\delta_{ij})P)
		=\bfe_\kappa((E'_{j,i}+\frac{\kappa}{2}\delta_{ij})P)=\bfe_\kappa((D_{j,i}+\kappa\delta_{ij})P).\]
\end{prop}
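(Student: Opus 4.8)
The plan is to establish the three equalities by Gaussian integration by parts in the integral representation of $\bfe_\kappa$, first for integers $\kappa\geq n$ and then for all $\kappa\in\bbc$ by analytic continuation. Conceptually, the identities say that, when tested against the linear functional $\bfe_\kappa$, the multiplication operator $F_{ij}$ is interchangeable with each of $E_{ij}$, $E'_{ji}$ and $D_{ji}$ up to the scalar shifts forced by the $\mathrm{Lie}(\rmu(n,n))$-structure on $\langle D_{ij},E_{ij},E'_{ij},F_{ij}\rangle$; the proof simply carries this out by moving derivatives across the Gaussian weight.

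Fix an integer $\kappa\geq n$ and use the representation
\[\bfe_\kappa(P)=(2\pi)^{-n\kappa}\int_{M_{n,\kappa}(\bbc)}\exp\!\left(-\tfrac12\tr(XX^*)\right)\widetilde{P}(X,\overline{X})\,dX,\]
abbreviating the right-hand side by $I[\Phi]$ for $\Phi=\widetilde{P}$, where $\widetilde{P}(X,\overline{X})=P(XX^*)$ by \eqref{eq:tilde}. The technical core is the pair of Wirtinger integration-by-parts identities
\[I[\partial_{x_{is}}\Phi]=I[y_{is}\Phi],\qquad I[\partial_{y_{is}}\Phi]=I[x_{is}\Phi],\]
valid for every polynomial $\Phi(X,Y)$, where on each right-hand side the factor is evaluated on the locus $Y=\overline{X}$ and the multiplicative constants are absorbed into the measure normalization. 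The one subtle point is that applying $\partial_{x_{is}}$ to $\Phi$ and then restricting to $Y=\overline{X}$ agrees with applying the holomorphic Wirtinger derivative $\partial_{x_{is}}$ to the restricted function $\Phi(X,\overline{X})$, because $\partial_{x_{is}}$ annihilates every $\overline{x_{kl}}$; integrating by parts against the Gaussian, whose boundary terms vanish by rapid decay, and using $\overline{x_{is}}=y_{is}$ on $Y=\overline{X}$ yields the first identity, while the second is obtained symmetrically with $\partial_{y_{is}}$ corresponding to $\partial_{\overline{x_{is}}}$.

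Granting these, all three equalities are short computations. Applying the first identity to $x_{is}\Phi$ gives
\[I[x_{is}y_{js}\Phi]=I[\partial_{x_{js}}(x_{is}\Phi)]=\delta_{ij}I[\Phi]+I[x_{is}\partial_{x_{js}}\Phi],\]
and summing over $s=1,\dots,\kappa$ produces the common intermediate expression
\[I[F_{ij}\Phi]=\kappa\delta_{ij}I[\Phi]+I\big[\textstyle\sum_s x_{is}\partial_{x_{js}}\Phi\big].\]
Recognizing $\sum_s x_{is}\partial_{x_{js}}=E_{ij}-\tfrac{\kappa}{2}\delta_{ij}$ collapses this to $I[(E_{ij}+\tfrac{\kappa}{2}\delta_{ij})\Phi]$, which is the first equality after passing back to $\bbc[T]$. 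The second equality is the mirror image, using the second IBP identity and $\sum_s y_{js}\partial_{y_{is}}=E'_{ji}-\tfrac{\kappa}{2}\delta_{ij}$. For the third I instead integrate the intermediate term by parts once more, via the second identity applied to $\partial_{x_{js}}\Phi$:
\[I\big[\textstyle\sum_s x_{is}\partial_{x_{js}}\Phi\big]=I\big[\textstyle\sum_s\partial_{y_{is}}\partial_{x_{js}}\Phi\big]=I[\Delta_{ji}\Phi],\]
so that $I[F_{ij}\Phi]=I[(\Delta_{ji}+\kappa\delta_{ij})\Phi]=I[(D_{ji}+\kappa\delta_{ij})\Phi]$. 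Since $\Delta_{ji}$ carries no additive scalar, the commutator term survives as the full $\kappa\delta_{ij}$, which explains the asymmetry with the $\tfrac{\kappa}{2}\delta_{ij}$ attached to $E_{ij}$ and $E'_{ji}$.

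It remains to remove the restriction $\kappa\in\bbz_{\geq n}$. For a fixed monomial $P$, the value $\bfe_\kappa(P)$ is a polynomial in $\kappa$ (it is a ratio of a generalized gamma integral of Gindikin--Siegel type to the normalizer $c_n(\kappa)$), and the operators $E_{ij},E'_{ij},D_{ij}$ depend polynomially on $\kappa$; hence for each fixed $P$ both sides of every asserted identity are polynomials in $\kappa$ agreeing on all integers $\kappa\geq n$, and therefore agree for all $\kappa\in\bbc$. I expect the main obstacle to be the careful Wirtinger bookkeeping in the two integration-by-parts lemmas, namely matching the operation of differentiating and then restricting to $Y=\overline{X}$ with the genuine holomorphic derivative of the restriction, and pinning down the normalizing constant; once these are in place, the algebraic identities and the analytic continuation are routine.
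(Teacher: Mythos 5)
Your proof is correct and follows essentially the same route as the paper: reduce to $\kappa\in\bbz_{\geq n}$, represent $\bfe_\kappa$ as the Gaussian integral over $M_{n,\kappa}(\bbc)$, use the two integration-by-parts identities $\bfe_{n,\kappa}(\partial_{x_{ij}}F)=\bfe_{n,\kappa}(y_{ij}F)$ and $\bfe_{n,\kappa}(\partial_{y_{ij}}F)=\bfe_{n,\kappa}(x_{ij}F)$, and conclude for general $\kappa$ by polynomiality in $\kappa$. The only difference is that you write out explicitly the index bookkeeping that the paper delegates to the cited calculation of Ibukiyama--Zagier, and your checks (e.g.\ $\sum_s x_{is}\partial_{x_{js}}=E_{ij}-\tfrac{\kappa}{2}\delta_{ij}$, $\sum_s\partial_{y_{is}}\partial_{x_{js}}=\Delta_{ji}$) come out consistent with the stated formula.
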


\begin{proof}
	It can be proved in the same way as \cite[\S4 Proposition~1]{Ibukiyama2014Higher}, but we give an outline of the proof for the reader.
	We may assume that $\kappa\in\bbz_{\geq n}$.
	We put
	\[\bfe_{n,\kappa}(F)=(2\pi)^{-n\kappa}\int_{M_{n,\kappa}(\bbc)}\exp\left(-\frac{\tr(XX^*)}{2}\right)F(X,\overline{X})dX\]
	for $F(X,Y)\in L_{n,\kappa}$.
	Then we have $\bfe_{n,\kappa}(\widetilde{P})=\bfe_{\kappa}(P)$.
	Note that
	\[
		\bfe_{n,\kappa}(\frac{\partial F}{\partial x_{ij}})=\bfe_{n,\kappa}(y_{ij}F), \qquad
		\bfe_{n,\kappa}(\frac{\partial F}{\partial y_{ij}})=\bfe_{n,\kappa}(x_{ij}F),
	\]
	the same calculation as \cite[\S4 Proposition~1]{Ibukiyama2014Higher} yields
	\[\bfe_{n,\kappa}(F_{ij}F)=\bfe_{n,\kappa}((E_{ij}+\frac{\kappa}{2}\delta_{ij})F)
		=\bfe_{n,\kappa}((E'_{j,i}+\frac{\kappa}{2}\delta_{ij})F)=\bfe_{n,\kappa}((\Delta_{j,i}+\kappa\delta_{ij})F).\]
	Applying this to $F = \widetilde{P}$ with $P\in C[T]$, we obtain the desired formula for all $\kappa \in \bbz_{\geq n }$ and hence for all $\kappa$.
\end{proof}

\begin{prop}\label{prop:adj}
	The adjoint operators of $D_{ij},E_{ij},F_{j,i}$ with respect to the action on the
	space $\bbc[T]$ with the inner product $(P,Q)_\kappa$ are given by
	\begin{align*}
		D_{ij}^*         & =D_{j,i}-E_{ij}-E'_{j,i}+F_{ij}, &
		E_{ij}^*         & =-E'_{j,i}+F_{j,i},              &
		E^{\prime*}_{ij} & =-E_{j,i}+F_{ij},                &
		F_{ij}^*         & =F_{j,i}.
	\end{align*}
\end{prop}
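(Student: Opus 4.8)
The plan is to package the four identities through a single auxiliary anti-involution. Since the inner product factors as $(P,Q)_\kappa = \bfe_\kappa(P\cdot\theta Q)$ with $\theta$ the conjugate-linear involution $Q=\sum_\nu C_\nu T^\nu\mapsto\sum_\nu\overline{C_\nu}({}^tT)^\nu$, the adjoint of any operator $A$ on $\bbc[T]$ can be written as $A^* = \theta A^{\flat}\theta$, where $A^{\flat}$ is the transpose with respect to the \emph{bilinear} pairing $\langle P,Q\rangle:=\bfe_\kappa(PQ)$, i.e. the operator characterised by $\bfe_\kappa((AP)S)=\bfe_\kappa(P(A^{\flat}S))$ for all $P,S$. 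First I would record how $\theta$ acts on the generators: being the conjugate-linear, multiplicative substitution $t_{ab}\mapsto t_{ba}$, it satisfies $\theta F_{ij}\theta = F_{ji}$, $\theta E_{ij}\theta = E'_{ij}$, $\theta E'_{ij}\theta = E_{ij}$ and $\theta D_{ij}\theta = D_{ji}$ (the last two using that $\kappa$ is real after the reduction below). Thus it suffices to compute the four $\flat$-transposes and conjugate them by $\theta$.

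As in the proof of Proposition~\ref{prop:Ekappa}, the asserted identities are polynomial in $\kappa$, so I would first assume $\kappa\in\bbz_{\geq n}$ and extend by analytic continuation at the end. For $F_{ij}$ the transpose is immediate, $F_{ij}^{\flat}=F_{ij}$, since multiplication operators are self-transpose; hence $F_{ij}^*=F_{ji}$. For $E_{ij}$ I would write $E_{ij}=\widetilde E_{ij}+\tfrac\kappa2\delta_{ij}$ with $\widetilde E_{ij}=\sum_s t_{is}\partial_{t_{js}}$ a derivation, apply the Leibniz rule $(\widetilde E_{ij}P)S=\widetilde E_{ij}(PS)-P\widetilde E_{ij}S$, and evaluate $\bfe_\kappa(\widetilde E_{ij}(PS))$ by the moment identity of Proposition~\ref{prop:Ekappa}, which rearranges to $\bfe_\kappa(\widetilde E_{ij}R)=\bfe_\kappa((t_{ij}-\kappa\delta_{ij})R)$. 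Collecting terms gives $E_{ij}^{\flat}=F_{ij}-E_{ij}$, and symmetrically $E'^{\flat}_{ij}=F_{ji}-E'_{ij}$; conjugating by $\theta$ then produces the $E$- and $E'$-adjoints.

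The second-order operator $D_{ij}=\kappa\partial_{ij}+\sum_{k,l}t_{kl}\partial_{il}\partial_{kj}$ is the main obstacle, because the bare $\partial_{t_{ab}}$ has no clean moment formula on $\bbc[T]$, so a naive second-order Leibniz expansion leaves cross terms $\sum_{kl}t_{kl}(\partial_{il}P)(\partial_{kj}S)$ that must be reorganised into the operators $E,E',F$ before Proposition~\ref{prop:Ekappa} applies, and one must track how these regenerate a degree-two contribution with the correct sign. To sidestep this bookkeeping I would lift the computation to $L_{n,\kappa}$ through $\widetilde P(X,Y)=P(X\,{}^tY)$, under which $D_{ij}$ corresponds to the mixed Laplacian $\Delta_{ij}=\sum_s\partial_{x_{is}}\partial_{y_{js}}$, the pairing becomes the Gaussian functional $\bfe_{n,\kappa}$, and the integration-by-parts identities $\bfe_{n,\kappa}(\partial_{x_{ij}}F)=\bfe_{n,\kappa}(y_{ij}F)$, $\bfe_{n,\kappa}(\partial_{y_{ij}}F)=\bfe_{n,\kappa}(x_{ij}F)$ already used for Proposition~\ref{prop:Ekappa} give the first-order transposes $\partial_{x_{ij}}^{\flat}=y_{ij}-\partial_{x_{ij}}$ and $\partial_{y_{ij}}^{\flat}=x_{ij}-\partial_{y_{ij}}$ at once. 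Since $\Delta_{ij}$ now factors into commuting first-order operators and $\flat$ is an anti-automorphism, I get $\Delta_{ij}^{\flat}=\sum_s(x_{js}-\partial_{y_{js}})(y_{is}-\partial_{x_{is}})$; expanding via the single commutator $[\partial_{y_{js}},y_{is}]=\delta_{ij}$ and recognising the homogeneous pieces as $F_{ji}$, $E_{ji}$, $E'_{ij}$ and $\Delta_{ij}$ collapses this to $\Delta_{ij}^{\flat}=\Delta_{ij}+F_{ji}-E_{ji}-E'_{ij}$. Translating back to $\bbc[T]$ (where $\Delta_{ij}$ corresponds to $D_{ij}$) and conjugating by $\theta$ then yields $D_{ij}^*=D_{ji}-E_{ij}-E'_{ji}+F_{ij}$.

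I expect the genuine difficulty to be concentrated in the $D_{ij}$ case: that the degree-two terms assemble into $+D_{ji}$ rather than $-D_{ji}$ hinges entirely on the correct normal ordering in the commutator step, and on confirming that the $\widetilde{\cdot}$ transfer intertwines $\Delta_{ij}$ with $D_{ij}$, the $X\leftrightarrow Y$ swap $\sigma$ (implementing conjugation on $L_{n,\kappa}$) with $\theta$, and the two inner products—facts essentially built into the setup of Proposition~\ref{prop:Ekappa}. As a final safeguard I would verify the involutivity $(A^*)^*=A$ for each generator, which simultaneously pins down the index placement in all four formulas.
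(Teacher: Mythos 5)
Your proposal is correct and follows essentially the same route as the paper: reduce to $\kappa\in\bbz_{\geq n}$, transfer to $L_{n,\kappa}$ where the pairing becomes the Gaussian functional $\bfe_{n,\kappa}$, obtain the transposes from the integration-by-parts identities underlying Proposition~\ref{prop:Ekappa}, and conjugate by $\theta$; your only organizational difference is factoring $\Delta_{ij}$ into commuting first-order operators and using that the transpose is an anti-automorphism, instead of the paper's direct second-order Leibniz computation, and this yields the same intermediate identities (e.g.\ $\Delta_{ij}^{\flat}=\Delta_{ij}+F_{j,i}-E_{j,i}-E'_{ij}$). One point worth flagging: your derivation gives $E_{ij}^*=-E'_{ij}+F_{j,i}$ and $E^{\prime*}_{ij}=-E_{ij}+F_{ij}$, which agree with the formulas actually obtained in the body of the paper's proof but differ from the displayed statement (which has $E'_{j,i}$ and $E_{j,i}$); a direct check (e.g.\ $P=1$, $Q=t_{21}$, $n=2$) and precisely the involution test $(A^*)^*=A$ that you propose as a safeguard confirm that your version is the consistent one, so the index placement in the displayed statement is a typo rather than an error on your part.
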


\begin{proof}
	Fourth equation is trivial and the first three equations can be proved by the same calculation as \cite[\S4 Proposition~2]{Ibukiyama2014Higher}.
	We may assume that $\kappa\in\bbz_{\geq n}$ again.
	If we define $\bfe_{n,\kappa}$ as above, for any $P,Q\in \bbc[T]$ we have
	\[(P,Q)_\kappa=\bfe_{n,\kappa}(\widetilde{P}\cdot \theta \widetilde{Q}),\]
	where $\theta$ is the operator on $L_{n,\kappa}$ defined by
	\[\theta F(X,Y)=\sum_{\nu,\mu} \overline{C_{\nu,\mu}} Y^\nu X^\mu\]
	for $F(X,Y)=\sum_{\nu,\mu} C_{\nu,\mu} X^\nu Y^\mu$.
	By direct calculation using Proposition~\ref{prop:Ekappa}, we have
	\begin{align*}
		\bfe_{n,\kappa}(F (\Delta_{ij}G)-(\Delta_{ij}F)G) & =\bfe_{n,\kappa}(F(-F_{j,i}+E_{j,i}+E'_{ij})G), \\
		\bfe_{n,\kappa}(F (E_{ij}G)+(E_{ij}F)G)           & =\bfe_{n,\kappa}(F(F_{ij})G),                   \\
		\bfe_{n,\kappa}(F (E'_{ij}G)+(E'_{ij}F)G)         & =\bfe_{n,\kappa}(F(F_{j,i})G)
	\end{align*}
	for any $F,G\in L_{n,\kappa}$.
	Thus, we have
	\begin{align*}
		\bfe_{n,\kappa}((\Delta_{ij}F)\cdot\theta G) & =\bfe_{n,\kappa}(F \cdot \theta((\Delta_{j,i}-E_{ij}-E'_{j,i}+F_{ij})G)), \\
		\bfe_{n,\kappa}((E_{ij}F)\cdot\theta G)      & =\bfe_{n,\kappa}(F\theta((-E'_{ij}+F_{j,i})G)),                           \\
		\bfe_{n,\kappa}((E'_{ij}F)\cdot\theta G)     & =\bfe_{n,\kappa}(F\theta((-E_{ij}+F_{ij})G)).
	\end{align*}
	Applying this to $F = \widetilde{P}$ and $G = \widetilde{Q}$ with $P,Q\in C[T]$, we obtain the desired equation for all $\kappa \in \bbz_{\geq n }$ and hence for all $\kappa$.
\end{proof}

For $\nu=(\nu_{ij})\in M_n(\bbz_{\geq0})$, we put $T^\nu=\prod_{ij}t_{ij}^{a_{ij}}$.
We define the subspace $\bbc[T]_{\bfa, \bfb}$ of $\bbc[T]$ as
\[\bbc[T]_{\bfa,\bfb}=\left\{P\in\bbc[T]\mid P(\lambda_1T\lambda_2)
	=\lambda_1^\bfa\lambda_2^\bfb P(T) \text{ for all } \lambda_1, \lambda_2\in \text{diag}(\bbc^n)\right\}\]
for $\bfa,\bfb\in \bbz^n_{\geq0}$.
We put
\[\calp_{\bfa,\bfb}(\kappa)=\calp^{(n)}(\kappa)\cap\bbc[T]_{\bfa,\bfb}, \qquad
	\calp^\bfn_{\bfa,\bfb}(\kappa)=\calp^\bfn(\kappa)\cap\bbc[T]_{\bfa,\bfb}.\]
We introduce the following notation:
\begin{align*}
	\caln=\caln_n           & =M_n(\bbz_{\geq0}),                                                                                   \\
	\caln_0=\caln_{n,0}     & =\left\{\nu=(\nu_{ij})\in\caln \mid \nu_{i,i}=0 \text{ for all } i  \right\},                         \\
	\caln(\bfa,\bfb)        & =\left\{\nu\in\caln\mid \nu\cdot\mathbf{1}=\bfa, \ ^t\!\mathbf{1}\cdot \nu=\bfb\right\},              \\
	\caln^\bfn_0(\bfa,\bfb) & =\left\{\nu=(\nu_{ij})\in\caln(\bfa,\bfb) \mid \nu_{ij}=0 \text{ for all } (i,j)\in I(\bfn) \right\}.
\end{align*}
Here, $\mathbf{1}$ is the column vector in $\bbz^n_{\geq0}$ such that all the components are 1.
When $\bfn=(1,1,\ldots,1)$, we often omit the subscript $\bfn$ of the notations.

Then, we have the direct sum decompositions of $\calp^{(n)}(\kappa)$ and $\calp^\bfn(\kappa)$ as follows:
\[
	\calp^{(n)}(\kappa) =\bigoplus_{\bfa,\bfb\in\bbz^n_{\geq0}}\calp_{\bfa,\bfb}(\kappa), \qquad
	\calp^\bfn(\kappa)   =\bigoplus_{\bfa,\bfb\in\bbz^n_{\geq0}}\calp^\bfn_{\bfa,\bfb}(\kappa).
\]
Moreover, This decomposition is orthogonal with respect to the inner product $(P,Q)_\kappa$.

\begin{cor}\label{cor:orthogonal}
	The spaces $\calp_{\bfa,\bfb}(\kappa)$ and $\calp_{\bfa',\bfb'}(\kappa)$ with distinct pairs of multi-indices $(\bfa,\bfb)$ and $(\bfa',\bfb')$ are orthogonal.
\end{cor}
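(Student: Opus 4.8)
The plan is to separate the two labels $\bfa-\bfb$ and $\bfa+\bfb$ by producing, for every pair $P\in\calp_{\bfa,\bfb}(\kappa)$ and $Q\in\calp_{\bfa',\bfb'}(\kappa)$, two scalar identities that each force $(P,Q)_\kappa=0$ unless the corresponding label agrees. The basic observation is that on $\bbc[T]_{\bfa,\bfb}$ the operators $E_{ii}$ and $E'_{ii}$ act as the scalars $a_i+\tfrac{\kappa}{2}$ and $b_i+\tfrac{\kappa}{2}$, since $\sum_s t_{i,s}\partial_{t_{i,s}}$ and $\sum_s t_{s,i}\partial_{t_{s,i}}$ read off the $i$-th row- and column-degrees. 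Thus $P$ and $Q$ are simultaneous eigenvectors of all the $E_{ii}$ and $E'_{ii}$, and the whole argument will consist of moving suitable combinations of these operators across the pairing using Proposition~\ref{prop:adj}, together with the defining relations $D_{ii}P=D_{ii}Q=0$.

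First I would treat the difference $\bfa-\bfb$. By Proposition~\ref{prop:adj} one has $E_{ii}^{*}=-E'_{ii}+F_{ii}$ and $E^{\prime*}_{ii}=-E_{ii}+F_{ii}$, so $E_{ii}-E'_{ii}$ is self-adjoint for $(\cdot,\cdot)_\kappa$. Moving it across the pairing gives
\[
(a_i-b_i)(P,Q)_\kappa=\big((E_{ii}-E'_{ii})P,Q\big)_\kappa=\big(P,(E_{ii}-E'_{ii})Q\big)_\kappa=(a_i'-b_i')(P,Q)_\kappa,
\]
where the last equality uses that $a_i'-b_i'\in\bbz$ is real when pulled out of the conjugate-linear second slot. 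Hence $(P,Q)_\kappa=0$ as soon as $a_i-b_i\neq a_i'-b_i'$ for some $i$; notice this step needs no pluriharmonicity and already yields orthogonality whenever $\bfa-\bfb\neq\bfa'-\bfb'$.

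The main obstacle is the sum $\bfa+\bfb$, and this is exactly where pluriharmonicity must enter: the natural degree operator $E_{ii}+E'_{ii}$ is \emph{not} self-adjoint (its adjoint is $-(E_{ii}+E'_{ii})+2F_{ii}$), so one cannot separate $a_i+b_i$ by the clean argument above. The way around is to exploit $D_{ii}^{*}=D_{ii}-E_{ii}-E'_{ii}+F_{ii}$ and the fact that the surviving $D_{ii}$-terms annihilate both harmonic vectors. Applying the adjoint relation for $D_{ii}$ to $P$ and to $Q$ separately, and recalling that $F_{ii}$ is multiplication by $t_{ii}$, one obtains
\[
(t_{ii}P,Q)_\kappa=(a_i+b_i+\kappa)(P,Q)_\kappa,\qquad (P,t_{ii}Q)_\kappa=(a_i'+b_i'+\kappa)(P,Q)_\kappa.
\]
Since $F_{ii}^{*}=F_{ii}$, the two left-hand sides coincide, so the $\kappa$ cancels and $(a_i+b_i)(P,Q)_\kappa=(a_i'+b_i')(P,Q)_\kappa$; hence $(P,Q)_\kappa=0$ whenever $\bfa+\bfb\neq\bfa'+\bfb'$.

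Finally I would combine the two steps: if $(\bfa,\bfb)\neq(\bfa',\bfb')$ then either $\bfa-\bfb\neq\bfa'-\bfb'$ or $\bfa+\bfb\neq\bfa'+\bfb'$, for otherwise both $\bfa=\bfa'$ and $\bfb=\bfb'$. In either case one of the two computations gives $(P,Q)_\kappa=0$, which is the assertion. The only point to double-check is that the adjoint identities of Proposition~\ref{prop:adj} hold at the relevant $\kappa$; since they were obtained by analytic continuation from $\kappa\in\bbz_{\geq n}$ they are valid throughout the convergence range $\Re(\kappa)>n-1$, and because $(\cdot,\cdot)_\kappa$ is conjugate-linear in its second argument by construction, the cancellation of the real integer eigenvalues is legitimate without assuming $\kappa$ real.
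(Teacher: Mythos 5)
Your proof is correct and rests on exactly the same ingredients as the paper's: the adjoint formulas of Proposition~\ref{prop:adj} combined with the fact that $D_{i,i}$ annihilates both $P$ and $Q$, together with the observation that $E_{i,i}$ and $E'_{i,i}$ act on $\bbc[T]_{\bfa,\bfb}$ as the scalars $a_i+\kappa/2$ and $b_i+\kappa/2$. The only difference is bookkeeping: you separate the labels via $\bfa-\bfb$ and $\bfa+\bfb$, whereas the paper uses the equivalent identities $D_{i,i}-D_{i,i}^*=E_{i,i}-E_{i,i}^*=E'_{i,i}-E^{\prime*}_{i,i}$ to extract $a_i=a_i'$ and $b_i=b_i'$ directly.
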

\begin{proof}
	Let $\bfa=(a_1,\ldots,a_n)$, $\bfb=(b_1,\ldots,b_n)$, $\bfa'=(a_1',\ldots,a_n')$ and $\bfb'=(b_1',\ldots,b_n')$ be two multi-indices.

	Since We have
	\[D_{i,i}-D_{i,i}^*=E_{i,i}-E_{i,i}^*\]
	by Proposition~\ref{prop:adj},
	we have
	\[0=(D_{i,i}P,Q)_\kappa-(P,D_{i,i}Q)_\kappa
		=(E_{i,i}P,Q)_\kappa-(P,E_{i,i}Q)_\kappa
		=(a_i-a_i')(P,Q)_\kappa\]
	for any $P\in\calp_{\bfa,\bfb}(\kappa)$ and $Q\in\calp_{\bfa',\bfb'}(\kappa)$ and $i\in\left\{1,\ldots,n\right\}$.
	Similarly, Since we also have $D_{i,i}-D_{i,i}^*=E'_{i,i}-E^{\prime*}_{i,i}$, we obtain $(b_j-b_j')(P,Q)_\kappa=0$ for any $j\in\left\{1,\ldots,n\right\}$.
	Therefore, if $(\bfa,\bfb)\neq(\bfa',\bfb')$, we have $(P,Q)_\kappa=0$.
\end{proof}

\subsection{Two bases of $\calp^{(n)}(\kappa)$}

In the following, we will construct two ``good'' bases of $\calp^{(n)}(\kappa)$ and $\calp^{\bfn}(\kappa)$ that are compatible with $D_{ij}$ according to the method of \cite{Ibukiyama2014Higher} and \cite{Ibukiyama2020generic}.

We note that there is an isomorphism $\bbc[T]_{\bfa,\bfb}\cong\bbc^{\caln(\bfa,\bfb)}$
given by mapping a polynomial to its set of coefficients.
We consider the map $\Phi: \calp^\bfn_{\bfa,\bfb}(\kappa)\rightarrow\bbc^{\caln_0(\bfa,\bfb)}$,
which is the composition of the inclusion $\calp^\bfn_{\bfa,\bfb}(\kappa)\subset\bbc[T]_{\bfa,\bfb}\cong\bbc^{\caln(\bfa,\bfb)}$
and the projection $\bbc^{\caln(\bfa,\bfb)}\rightarrow\bbc^{\caln_0(\bfa,\bfb)}$.
We put $\Xi_{\bfa,\bfb}=\Xi=\bbz\cap\bigcup_{i=1}^d[2-(a_i+b_i),\ 1-\max\left\{a_i,b_i\right\}]$.
\begin{prop}\label{prop:isom}
	If $\kappa\in \bbc\backslash\Xi$,
	$\Phi: \calp^\bfn_{\bfa,\bfb}(\kappa)\rightarrow\bbc^{\caln_0(\bfa,\bfb)}$ is an isomorphism.
\end{prop}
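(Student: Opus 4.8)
The plan is to fix the bidegree and reduce the statement to a triangular linear system whose pivots are exactly the quantities excluded by $\Xi$. Observe first that $\ker\Phi=\calp^{\bfn}_{\bfa,\bfb}(\kappa)\cap\mathcal I_{\bfa,\bfb}$, where $\mathcal I_{\bfa,\bfb}:=\mathrm{span}\{T^{\nu}:\nu_{ij}\geq 1\text{ for some }(i,j)\in I(\bfn)\}$ is the degree-$(\bfa,\bfb)$ part of the ideal generated by $\{t_{ij}:(i,j)\in I(\bfn)\}$: indeed $\Phi(P)=0$ means all coefficients indexed by $\caln^{\bfn}_0(\bfa,\bfb)$ vanish, i.e.\ $P\in\mathcal I_{\bfa,\bfb}$. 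Since the complementary monomials are indexed by $\caln^{\bfn}_0(\bfa,\bfb)$, one has $\dim\bbc[T]_{\bfa,\bfb}=\#\caln^{\bfn}_0(\bfa,\bfb)+\dim\mathcal I_{\bfa,\bfb}$. Hence ``$\Phi$ is an isomorphism'' is equivalent to the direct-sum decomposition $\bbc[T]_{\bfa,\bfb}=\calp^{\bfn}_{\bfa,\bfb}(\kappa)\oplus\mathcal I_{\bfa,\bfb}$, i.e.\ to the two statements that $\Phi$ is injective and that $\dim\calp^{\bfn}_{\bfa,\bfb}(\kappa)=\#\caln^{\bfn}_0(\bfa,\bfb)$. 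Restricting to a single bidegree is legitimate by the bidegree decomposition of $\calp^{\bfn}(\kappa)$ recorded before Corollary~\ref{cor:orthogonal}.

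Next I would make the action of $D_{ij}$ on monomials explicit. A direct computation, collecting the index patterns produced by the second-order part of $D_{ij}$, gives for $T^{\nu}\in\bbc[T]_{\bfa,\bfb}$
\[
D_{ij}T^{\nu}=\nu_{ij}\bigl(\kappa+a_i+b_j-\nu_{ij}-1\bigr)T^{\nu-e_{ij}}+\sum_{k\neq i,\;l\neq j}\nu_{il}\nu_{kj}\,T^{\nu-e_{il}-e_{kj}+e_{kl}},
\]
where $a_i=\sum_l\nu_{il}$ and $b_j=\sum_k\nu_{kj}$ are constant on $\bbc[T]_{\bfa,\bfb}$. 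The first term is the pivot: as $\nu_{ij}$ runs over $1,\ldots,\min(a_i,b_j)$ the scalar $\kappa+a_i+b_j-\nu_{ij}-1$ vanishes precisely for $\kappa\in\bbz\cap[\,2-a_i-b_j,\;1-\max\{a_i,b_j\}\,]$, and taking the union over $(i,j)\in I(\bfn)$ recovers (a subset of) $\Xi$. Thus for $\kappa\notin\Xi$ every pivot is nonzero.

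For injectivity I would run a recursion on the diagonal-block degree $d(\nu):=\sum_{(i,j)\in I(\bfn)}\nu_{ij}$. The coupling term above shifts $\nu$ by $-e_{ij}+e_{il}+e_{kj}-e_{kl}$ with $k\neq i$, $l\neq j$. In the full-diagonal case $\bfn=(1,\ldots,1)$ this shift strictly lowers $d$, so the equations $D_{ii}P=0$ express every coefficient with $d\geq1$ through coefficients of strictly smaller $d$ with invertible pivot; if $\Phi(P)=0$, i.e.\ all $d=0$ coefficients vanish, induction on $d$ forces $P=0$. This already proves the proposition for $\bfn=(1,\ldots,1)$ and exhibits $\Xi$ as the degeneration locus of the pivots.

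The main obstacle is the general partition. When a block $I(\bfn)_s$ has size $>1$ the coupling term can leave $d$ unchanged, merely moving diagonal weight from $(i,j)$ to another slot of the same block, so the recursion is no longer triangular for $d$ alone and the coefficients of a fixed level become coupled. I would resolve this by refining the filtration within each level, ordering by a secondary statistic adapted to the block structure, equivalently peeling off one block at a time and treating the variables outside that block as enlarging the effective parameter $\kappa$, so that the residual same-level subsystem is again triangular; its pivots then involve the block sums $a_s$ and $b_s$ (the sums of $a_p$, resp.\ $b_q$, over the indices $p,q$ of the $s$-th block), which is exactly why $\Xi$ is phrased through block sums. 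This gives injectivity for all $\kappa\notin\Xi$. For the remaining equality of dimensions I would use the same recursion to construct, for each standard basis vector of $\bbc^{\caln^{\bfn}_0(\bfa,\bfb)}$, a harmonic preimage; the one point requiring care is consistency, namely that satisfying the chosen defining equation for each coefficient forces all of the relations $D_{ij}P=0$, which follows from the commutation relations among the $D_{ij}$ together with Propositions~\ref{prop:Ekappa} and~\ref{prop:adj}, as in \cite{Ibukiyama2014Higher} and \cite{Ibukiyama2020generic}. Alternatively, once injectivity is known off $\Xi$, upper semicontinuity of $\dim\ker$ together with the generic value $\#\caln^{\bfn}_0(\bfa,\bfb)$ yields $\dim\calp^{\bfn}_{\bfa,\bfb}(\kappa)=\#\caln^{\bfn}_0(\bfa,\bfb)$ off $\Xi$, completing the proof.
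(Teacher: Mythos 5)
Your injectivity half coincides with the paper's argument: the same expansion
\[
D_{ij}T^{\nu}=\nu_{ij}\bigl(\kappa+a_i+b_j-\nu_{ij}-1\bigr)T^{\nu-e_{ij}}+\sum_{k\neq i,\,l\neq j}\nu_{il}\nu_{kj}\,T^{\nu-e_{il}-e_{kj}+e_{kl}},
\]
the same pivot analysis showing that $\kappa\notin\Xi$ makes every pivot nonzero, and the same induction on the diagonal degree. The genuine gap is in the other half, the lower bound $\dim\calp_{\bfa,\bfb}(\kappa)\geq\#\caln_0(\bfa,\bfb)$, and neither of your two alternatives closes it. The ``construct a harmonic preimage coefficient by coefficient'' route runs into exactly the consistency problem you flag: a coefficient $c_{\nu}$ with several nonzero diagonal entries is prescribed by several competing recursions (one for each $D_{i,i}$ with $\nu_{i,i}\geq 1$), and commutativity of the $D_{i,i}$ together with Propositions~\ref{prop:Ekappa} and~\ref{prop:adj} does not by itself show these prescriptions agree; an actual integrability argument is needed and you do not supply one. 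The semicontinuity route is circular as written: upper semicontinuity of $\dim\ker$ in $\kappa$ gives $\dim\calp_{\bfa,\bfb}(\kappa)\geq(\text{generic value})$ for every $\kappa$, but your injectivity only bounds the generic value \emph{above} by $\#\caln_0(\bfa,\bfb)$, so asserting that the generic value \emph{equals} $\#\caln_0(\bfa,\bfb)$ is precisely the missing lower bound. The paper avoids both issues with a short rank--nullity induction that your proposal does not contain: putting $K_i(\bfa,\bfb)=\bigcap_{j\leq i}\ker(D_{j,j})\subset\bbc[T]_{\bfa,\bfb}$ and $N_i(\bfa,\bfb)=\#\{\nu\in\caln(\bfa,\bfb)\mid\nu_{1,1}=\cdots=\nu_{i,i}=0\}$, commutativity of the $D_{j,j}$ shows that $D_{i,i}$ maps $K_{i-1}(\bfa,\bfb)$ into $K_{i-1}(\bfa-e_i,\bfb-e_i)$, whence $\dim K_i(\bfa,\bfb)\geq N_{i-1}(\bfa,\bfb)-N_{i-1}(\bfa-e_i,\bfb-e_i)=N_i(\bfa,\bfb)$ by induction on $i$. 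You need this step (or a worked-out substitute) for the proof to be complete.

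A secondary point: the paper's proof only ever invokes the diagonal operators $D_{i,i}$, i.e.\ it really treats the case $\bfn=(1,\ldots,1)$, where $\caln_0^{\bfn}=\caln_0$. Your attempt to handle an arbitrary partition by ``peeling off one block at a time'' is a heuristic, not an argument: you correctly observe that inside a block of size greater than one the coupling term need not lower the diagonal-block degree, but the secondary filtration is never specified, and your claim that the resulting pivots (and hence $\Xi$) are governed by block sums is not justified --- the pivot attached to $(i,j)$ involves $a_i+b_j$ for the individual indices, and $\Xi$ as defined in the paper is a union of intervals built from the individual $a_i+b_i$.
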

\begin{proof}
	The proof can be done as \cite[Theorem~1]{Ibukiyama2014Higher}.
	Let  $P(T)=\sum_{\nu\in\bbc^{\caln(\bfa,\bfb)}} c_\nu T^\nu$ be an element of $\calp^\bfn_{\bfa,\bfb}(\kappa)$.
	Since we can easily see that

	\begin{equation}\label{recursion}
		D_{i,i}(T^\nu)=\nu_{i,i}(a_i+b_i-\nu_{i,i}+\kappa-1)T^{\nu-e_{i,i}}
		+\sum_{k,l\neq i}\nu_{i,l}\nu_{k,i}T^{\nu-e_{i,l}-e_{k,i}+e_{k,l}},
	\end{equation}

	the differential equations $D_{i,i}P = 0$ yield recursion formulas for the coefficients of $P(T)$.
	when $\kappa\not\in \Xi$,
	we have $a_i+b_i-\nu_{i,i}+\kappa-1\neq0$, and $P(T)$ is determined only by the coefficients of degree $\nu \in\caln_0(\bfa,\bfb)$.

	Conversely, the surjectivity of $\Phi$ follows from the following stronger claim:
	\begin{claim}
		If we put $K_i(\bfa,\bfb)=\ker(D_{1,1})\cap\ker(D_{2,2})\cap\cdots\cap\ker(D_{i,i})\subset\bbc[T]_{\bfa,\bfb}$, we have
		\[\dim K_i = N_{i}(\bfa,\bfb)=\left|\left\{\nu\in\caln(\bfa,\bfb)\mid\nu_{1,1}=\cdots=\nu_{i,i}=0\right\}\right|\]
		for $i\in\left\{0,\ldots,n\right\}$.
	\end{claim}
	$\dim K_{i}(\bfa,\bfb)\leq N_{i}(\bfa,\bfb)$ follows from the fact the above recursion formulas.
	Since differential operators $D_{ij}$ are mutually commutative, $D_{i,i}$ sends $K_i(\bfa,\bfb)$ to $K_i(\bfa-e_i,\bfb-e_i)$,
	where $e_i$ denotes the vector with 1 in the $i$-th place and 0 elsewhere.
	If we therefore assume inductively that $\dim K_{i-1}(\bfa,\bfb) = N_{i-1}(\bfa,\bfb)$ for all $\bfa,\bfb$,
	\begin{align*}
		\dim K_{i}(\bfa,\bfb) & =\dim (D_{i,i}:K_{i-1}(\bfa,\bfb)\rightarrow K_{i-1}(\bfa-e_i,\bfb-e_i)) \\
		                      & \geq\dim K_{i-1}(\bfa,\bfb)-\dim K_{i-1}(\bfa-e_i,\bfb-e_i)              \\
		                      & = N_{i-1}(\bfa,\bfb) - N_{i-1}(\bfa-e_i,\bfb-e_i)                        \\
		                      & =N_{i}(\bfa,\bfb).
	\end{align*}
\end{proof}
\begin{cor}\label{cor:decomposition}
	For any $\kappa\in\bbc\backslash\bbz_{\leq0}$, we have
	\[\bbc[T]=\calp^{(n)}(\kappa)[t_{1,1},t_{2,2},\ldots,t_{n,n}].\]
	More precisely, wee have a direct sum decomposition
	\[\bbc[T]_{\bfa,\bfb}=\bigoplus_{0\leq\mathbf{m}\leq\min\left\{\bfa,\bfb\right\}}\delta(T)^\mathbf{m}\calp_{\bfa-\mathbf{m},\bfb-\mathbf{m}}(\kappa),\]
	where the sum ranges over  $\left\{\mathbf{m}=(m_i)\in\bbz^n_{\geq0} \mid 0\leq m_i\leq \min\left\{a_i,b_i\right\}\right\}$
	and $\delta(T)^\mathbf{m}=\prod_{i=1}^n t_{i,i}^{m_i}$.
\end{cor}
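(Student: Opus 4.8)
The plan is to prove the finer multigraded identity
\[
\bbc[T]_{\bfa,\bfb}=\bigoplus_{0\leq\mathbf{m}\leq\min\{\bfa,\bfb\}}\delta(T)^{\mathbf{m}}\calp_{\bfa-\mathbf{m},\bfb-\mathbf{m}}(\kappa),
\]
and to deduce the ring-level statement $\bbc[T]=\calp^{(n)}(\kappa)[t_{1,1},\ldots,t_{n,n}]$ from it by summing over all bidegrees $(\bfa,\bfb)$. Fix $\bfa,\bfb\in\bbz_{\geq0}^n$. Since multiplication by $\delta(T)^{\mathbf{m}}=\prod_i t_{i,i}^{m_i}$ raises the bidegree by $(\mathbf{m},\mathbf{m})$, each candidate summand lies in $\bbc[T]_{\bfa,\bfb}$, so it remains to verify that the dimensions add up correctly and that the sum is direct; together these force the sum to exhaust $\bbc[T]_{\bfa,\bfb}$. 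The key preliminary observation is that the hypothesis $\kappa\notin\bbz_{\leq0}$ is exactly what lets one apply Proposition~\ref{prop:isom} to every piece: writing $a_i'=a_i-m_i\geq0$, $b_i'=b_i-m_i\geq0$, the interval $[2-(a_i'+b_i'),\,1-\max\{a_i',b_i'\}]$ is empty unless $a_i',b_i'\geq1$, in which case its right endpoint is $\leq0$; hence $\Xi_{\bfa-\mathbf{m},\bfb-\mathbf{m}}\subseteq\bbz_{\leq0}$ and $\kappa\notin\Xi_{\bfa-\mathbf{m},\bfb-\mathbf{m}}$ for every admissible $\mathbf{m}$.

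For the dimension count, Proposition~\ref{prop:isom} (with $\bfn=(1,\ldots,1)$) gives $\dim\calp_{\bfa-\mathbf{m},\bfb-\mathbf{m}}(\kappa)=|\caln_0(\bfa-\mathbf{m},\bfb-\mathbf{m})|$, and since multiplication by $\delta(T)^{\mathbf{m}}$ is injective the $\mathbf{m}$-summand has the same dimension. These add up to $\dim\bbc[T]_{\bfa,\bfb}=|\caln(\bfa,\bfb)|$ via the diagonal-extraction bijection $\caln(\bfa,\bfb)\xrightarrow{\sim}\bigsqcup_{\mathbf{m}}\caln_0(\bfa-\mathbf{m},\bfb-\mathbf{m})$ sending $\nu$ to the pair consisting of its diagonal $\mathbf{m}=(\nu_{1,1},\ldots,\nu_{n,n})$ and its off-diagonal part $\nu-\sum_i\nu_{i,i}e_{i,i}$; here $m_i=\nu_{i,i}\leq\min\{a_i,b_i\}$ because the $(i,i)$ entry is bounded by both the $i$th row sum and the $i$th column sum. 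This part is routine.

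The real work is to show the sum is direct, and this is the main obstacle. I would first record the commutation relations $[D_{i,i},F_{j,j}]=\delta_{ij}(E_{i,i}+E'_{i,i})$ (where $F_{i,i}$ is multiplication by $t_{i,i}$), which follow from the $\mathrm{Lie}(\rmu(n,n))$-structure noted before Proposition~\ref{prop:adj} or by a direct computation, together with the fact that $E_{i,i}+E'_{i,i}$ acts on $\bbc[T]_{\bfa,\bfb}$ as the scalar $a_i+b_i+\kappa$. A short $\mathfrak{sl}_2$-ladder induction then yields, for any pluriharmonic $P\in\calp_{\bfa-\mathbf{m},\bfb-\mathbf{m}}(\kappa)$,
\[
D_{i,i}^{\,r}\!\left(t_{i,i}^{\,m}P\right)=\left(\prod_{s=0}^{r-1}(m-s)\bigl(a_i+b_i+\kappa-m-s-1\bigr)\right)t_{i,i}^{\,m-r}P,
\]
using $D_{i,i}P=0$. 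Taking $r=m=m_i$, the scalar is $m_i!\prod_{s=0}^{m_i-1}(a_i+b_i+\kappa-m_i-s-1)$; its would-be vanishing factors force $\kappa\in\{m_i+1-a_i-b_i,\ldots,2m_i-a_i-b_i\}\subseteq\bbz_{\leq0}$ (as $2m_i\leq a_i+b_i$), so under $\kappa\notin\bbz_{\leq0}$ the scalar $C_i$ is nonzero, while $D_{i,i}^{\,r}(t_{i,i}^{\,m_i}P)=0$ for $r>m_i$.

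With these in hand, directness follows by a peeling argument. Suppose $\sum_{\mathbf{m}}\delta(T)^{\mathbf{m}}P_{\mathbf{m}}=0$ with not all $P_{\mathbf{m}}=0$, and choose $\mathbf{m}^*$ maximal (componentwise) among $\{\mathbf{m}:P_{\mathbf{m}}\neq0\}$. Apply $\prod_i D_{i,i}^{\,m_i^*}$: because the operators $D_{i,i}$ commute with one another and, for $i\neq j$, with $t_{j,j}$, this operator annihilates $\delta(T)^{\mathbf{m}}P_{\mathbf{m}}$ whenever some $m_i<m_i^*$ (i.e.\ whenever $\mathbf{m}\not\geq\mathbf{m}^*$), hence every surviving term except $\mathbf{m}=\mathbf{m}^*$; and it sends $\delta(T)^{\mathbf{m}^*}P_{\mathbf{m}^*}$ to $(\prod_i C_i)\,P_{\mathbf{m}^*}$ with $\prod_i C_i\neq0$. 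Thus $P_{\mathbf{m}^*}=0$, a contradiction. Combining directness with the dimension equality gives the multigraded decomposition, and summing over $(\bfa,\bfb)$ yields the ring-level statement.
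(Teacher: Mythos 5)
Your argument is correct, and it supplies a complete, self-contained proof of a statement for which the paper itself only cites the symmetric-matrix analogue (Corollary to Theorem~1 in \cite{Ibukiyama2014Higher}); the strategy is the same as in that reference, namely a dimension count via Proposition~\ref{prop:isom} together with the diagonal-extraction bijection, combined with directness extracted from the commutation relation $[D_{i,i},t_{i,i}]=E_{i,i}+E'_{i,i}$. I checked the key scalar $D_{i,i}^{m}(t_{i,i}^{m}P)=m!\prod_{s=0}^{m-1}(a_i+b_i+\kappa-m-s-1)\,P$ and your observation that its vanishing would force $\kappa\in\bbz_{\leq0}$ (using $2m_i\leq a_i+b_i$); both are right, and this computation is also what underlies the projection operator $\Pi^{(\kappa)}_{\bfa,\bfb}$ in the Lemma that follows the corollary.
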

\begin{proof}
	See Corollary to  Theorem~1 in \cite{Ibukiyama2014Higher}.
\end{proof}

Let $\Pi^{(\kappa)}_{\bfa,\bfb}:\bbc[T]_{\bfa,\bfb}\rightarrow\calp_{\bfa,\bfb}(\kappa)$ be a projection with respect to
the direct sum decomposition in the Corollary~\ref{cor:decomposition}.
That is, $\Pi^{(\kappa)}_{\bfa,\bfb}$ sends any polynomial $P(T)\in\bbc[T]_{\bfa,\bfb}$ to the unique polynomial in $\calp_{\bfa,\bfb}(\kappa)$ which is congruent to $P$ modulo the ideal $(t_{1,1}, \ldots, t_{n,n})$ of $\bbc[T]$.
\begin{lem}
	We have
	\[\Pi^{(\kappa)}_{\bfa,\bfb}=\prod_{i=1}^n \left(\sum_{0\leq j \leq \min\left\{a_i,b_i\right\}}\frac{t_{i,i}^jD_{i,i}^j}{j!(a_i+b_i+\kappa-2)_j}\right),\]
	where $(a)_i=a(a-1)\cdots(a-i+1)$.
\end{lem}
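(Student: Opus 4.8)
The plan is to characterize $\Pi^{(\kappa)}_{\bfa,\bfb}$ abstractly and then verify that the right-hand operator, which I will call $\Pi'=\prod_{i=1}^n\Pi_i$ with $\Pi_i=\sum_{0\le j\le\min\{a_i,b_i\}}\frac{t_{i,i}^jD_{i,i}^j}{j!(a_i+b_i+\kappa-2)_j}$, meets that characterization. By Corollary~\ref{cor:decomposition} the decomposition $\bbc[T]_{\bfa,\bfb}=\calp_{\bfa,\bfb}(\kappa)\oplus\bigl((t_{1,1},\ldots,t_{n,n})\cap\bbc[T]_{\bfa,\bfb}\bigr)$ shows that $\Pi^{(\kappa)}_{\bfa,\bfb}$ is the \emph{unique} linear endomorphism of $\bbc[T]_{\bfa,\bfb}$ whose image lies in $\calp_{\bfa,\bfb}(\kappa)=\bigcap_i\ker D_{i,i}$ and which is congruent to the identity modulo $(t_{1,1},\ldots,t_{n,n})$. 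The congruence for $\Pi'$ is immediate: every summand of $\Pi_i$ with $j\ge1$ carries a factor $t_{i,i}^j$ and so lies in the ideal, while the $j=0$ term is the identity, whence $\Pi'\equiv\mathrm{id}\pmod{(t_{1,1},\ldots,t_{n,n})}$. Thus the entire content is to prove that $\Pi'$ maps $\bbc[T]_{\bfa,\bfb}$ into $\calp_{\bfa,\bfb}(\kappa)$.

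For this I would first record the commutation relations among the $t_{i,i}$ and $D_{i,i}$. A direct computation from \eqref{eq:Dij} gives
\[
	[D_{m,m},t_{m,m}]=\kappa+\sum_l t_{m,l}\partial_{m,l}+\sum_k t_{k,m}\partial_{k,m}=E_{m,m}+E'_{m,m},
\]
which on $\bbc[T]_{\bfa,\bfb}$ restricts to multiplication by the scalar $\kappa+a_m+b_m$; together with $[E_{m,m}+E'_{m,m},t_{m,m}]=2t_{m,m}$ and $[E_{m,m}+E'_{m,m},D_{m,m}]=-2D_{m,m}$ this exhibits $(t_{m,m},D_{m,m})$ as an $\mathfrak{sl}_2$-type pair. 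For $i\ne m$ one checks likewise that $D_{m,m}$ commutes with both $t_{i,i}$ and $D_{i,i}$ (the fixed index $m$ in each derivative of $D_{m,m}$ prevents it from differentiating $t_{i,i}$, and the $D_{ij}$ are mutually commutative), so the factors $\Pi_i$ commute with one another and $D_{m,m}$ commutes with $\Pi_i$ for every $i\ne m$. Consequently $D_{m,m}\Pi'=\bigl(\prod_{i\ne m}\Pi_i\bigr)\,D_{m,m}\Pi_m$, and it suffices to show that each single-index factor satisfies $D_{m,m}\Pi_m=0$, i.e.\ that $\Pi_m$ is the projection of $\bbc[T]_{\bfa,\bfb}$ onto $\ker D_{m,m}$ along $t_{m,m}\,\bbc[T]_{\bfa-e_m,\bfb-e_m}$.

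To prove $D_{m,m}\Pi_m=0$ I would use the single-index weight decomposition $\bbc[T]_{\bfa,\bfb}=\bigoplus_{s\ge0}t_{m,m}^s\,Q_s$ with $Q_s\in\ker D_{m,m}$ (obtained from Corollary~\ref{cor:decomposition} by grouping the summands according to the exponent of $t_{m,m}$). The $\mathfrak{sl}_2$ relation above then yields $D_{m,m}^j(t_{m,m}^sQ)=(s)_j\,(\mu+s)_j\,t_{m,m}^{s-j}Q$ for an $m$-harmonic $Q$, where $\mu$ is fixed by the $m$-degrees of $Q$ and $(\,\cdot\,)_j$ is the falling factorial $(a)_j=a(a-1)\cdots(a-j+1)$. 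Substituting into $\Pi_m$ collapses its action on $t_{m,m}^sQ$ to multiplication by a terminating Gauss sum of the shape ${}_2F_1(-s,\ast;\ast;1)$; Chu--Vandermonde evaluates this to a factor $(1-s)(2-s)\cdots(0)$, which vanishes for every $s\ge1$ and equals $1$ for $s=0$. Hence $\Pi_m$ kills $t_{m,m}^sQ$ for $s\ge1$ and fixes $\ker D_{m,m}$, giving $D_{m,m}\Pi_m=0$ and therefore $D_{m,m}\Pi'=0$ for all $m$; combined with the congruence above and the uniqueness from Corollary~\ref{cor:decomposition}, this forces $\Pi'=\Pi^{(\kappa)}_{\bfa,\bfb}$. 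The delicate step — the one I would check most carefully — is exactly this Chu--Vandermonde evaluation, since it is the precise bookkeeping of the constants $(a_i+b_i+\kappa-2)_j$ (signs included) that forces each $\Pi_i$ to be an honest idempotent rather than merely a lower-order correction; the commutation relations themselves are routine once the $\mathfrak{sl}_2$ pattern is isolated.
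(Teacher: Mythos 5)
Your overall strategy is sound and is in fact the standard Ibukiyama--Zagier argument (the paper itself only cites \cite[p.25]{Ibukiyama2014Higher} here): characterize $\Pi^{(\kappa)}_{\bfa,\bfb}$ as the unique endomorphism of $\bbc[T]_{\bfa,\bfb}$ with image in $\calp_{\bfa,\bfb}(\kappa)$ that is the identity modulo $(t_{1,1},\ldots,t_{n,n})$, reduce to a single diagonal index via the commutators, and evaluate the resulting scalar. The relations $[D_{m,m},t_{m,m}]=E_{m,m}+E'_{m,m}$, the commutativity for $i\neq m$, and the formula $D_{m,m}^j(t_{m,m}^sQ)=(s)_j\,(N+1-s)_j\,t_{m,m}^{s-j}Q$ for $Q\in\ker D_{m,m}$ with $N=a_m+b_m+\kappa-2$ (falling factorials throughout) are all correct.

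But the step you flagged as delicate is exactly where the argument fails \emph{for the formula as stated}. With the coefficients $\tfrac{1}{j!(N)_j}$ the scalar by which $\Pi_m$ acts on $t_{m,m}^sQ$ is
\[
\sum_{j=0}^s\frac{(s)_j(N+1-s)_j}{j!\,(N)_j}
=\sum_{j=0}^s\frac{(-s)^{(j)}(s-N-1)^{(j)}}{j!\,(-N)^{(j)}}(-1)^j,
\]
where $(x)^{(j)}=x(x+1)\cdots(x+j-1)$: this is a terminating ${}_2F_1$ at argument $-1$, not $+1$, because the sign from converting $(s)_j$ to a rising factorial is not cancelled. Chu--Vandermonde does not apply and the sum does not vanish; for $s=1$ it equals $1+\tfrac{N}{N}=2$. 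Concretely, for $\bfa=\bfb=(1,0,\ldots,0)$ one has $\bbc[T]_{\bfa,\bfb}=\bbc\, t_{1,1}$ and $\calp_{\bfa,\bfb}(\kappa)=0$ since $D_{1,1}t_{1,1}=\kappa$, yet the stated operator sends $t_{1,1}$ to $t_{1,1}+\tfrac{t_{1,1}\kappa}{\kappa}=2t_{1,1}\neq0$; likewise for $n=2$, $\bfa=\bfb=(1,1)$ it sends $t_{1,2}t_{2,1}$ to $t_{1,2}t_{2,1}+\tfrac{3}{\kappa}t_{1,1}t_{2,2}$, while the paper's own $n=2$ example gives $P^M_{\nu,\kappa}=t_{1,2}t_{2,1}-\tfrac{1}{\kappa}t_{1,1}t_{2,2}$. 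The lemma is missing a factor $(-1)^j$ in each summand (present in Ibukiyama--Zagier); with that sign the scalar becomes ${}_2F_1(-s,\,s-N-1;\,-N;\,1)=\tfrac{(1-s)^{(s)}}{(-N)^{(s)}}$, which is $(1-s)(2-s)\cdots0=0$ for $s\geq1$ and $1$ for $s=0$, and your argument closes. So the gap is not in your method but in asserting the $z=1$ evaluation for coefficients that actually produce a $z=-1$ series; carrying out your own ``delicate step'' honestly would have exposed the missing sign rather than confirmed the stated formula.
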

\begin{proof}
	See, for example, \cite[p.25]{Ibukiyama2014Higher}.
\end{proof}

As a Corollary to the theorem, we obtain the first basis of $\calp_{\bfa,\bfb}(\kappa)$.
\begin{dfnprop}\label{cor:monomial}
	We assume that $\kappa\in\bbc\backslash\Xi$.
	Let $P^M_{\nu,\kappa}(T)$ be a polynomial in $\calp_{\bfa,\bfb}(\kappa)$ defined by
	\[P^M_{\nu,\kappa}(T)=\Pi^{(\kappa)}_{\bfa,\bfb}(T^\nu)\]
	for  $\caln_0(\bfa,\bfb)$.
	Then, these polynomials $\left\{P^M_{\nu,\kappa} \right\}_{\nu\in \caln_0(\bfa,\bfb) }$ form a basis of $\calp_{\bfa,\bfb}(\kappa)$.
	The basis $\left\{P^M_{\nu,\kappa} \right\}$ is called the ``monomial basis'' of $\calp_{\bfa,\bfb}(\kappa)$.
\end{dfnprop}
\begin{rem}
	We can also define the monomial basis of $\calp_{\bfa,\bfb}(\kappa)$ by the following characterization.:
	For each multi-index $\nu\in\caln_0(\bfa,\bfb)$, there exists a unique polynomial $P^M_{\nu,\kappa}\in\calp_{\bfa,\bfb}(\kappa)$ such that
	\[P^M_{\nu,\kappa}(T)=T^\nu+Q(T),\]
	where $Q(T)|_{t_{1,1}=t_{2,2}=\cdots=t_{n,n}=0}=0$.
\end{rem}

The second basis is compatible with the operators $D_{ij}$ and can be constructed using the same method as in the proof of Proposition~\ref{prop:isom}.

\begin{dfnprop}\label{prop:descending}
	If $\kappa \in \bbc\backslash\bbz_{<n}$,
	there exists a unique basis $\left\{P^D_{\nu,\kappa}\mid\nu\in\caln_0(\bfa,\bfb)\right\}$ of $\calp_{\bfa,\bfb}(\kappa)$ characterized by the properties
	\[D_{ij}P_{\nu,\kappa}^D=P_{\nu-e_{ij}, \kappa}^D \ (i\neq j) \quad \text{and} \quad P_0^D=1.\]
	Here, when $\nu_{ij}=0$, we put $P_{\nu-e_{ij},\kappa}^D=0$.
	The basis $\left\{P^D_{\nu,\kappa}(T) \right\}$ is called the ``descending basis'' of $\calp_{\bfa,\bfb}(\kappa)$.
\end{dfnprop}
\begin{proof}
	See \cite[p.28-29, \S9]{Ibukiyama2014Higher}.
\end{proof}

\begin{prop}
	\begin{enumerate}
		\item For any two multi-indices $\mu$ and $\nu$ such that $\nu\cdot\mathbf{1}=\mu\cdot\mathbf{1}$ and $^t\mathbf{1}\cdot\nu={}^t\!\mathbf{1}\cdot\mu$,
		      we have
		      \[(P_{\mu,\kappa}^M,P_{\nu,\kappa}^M)_\kappa=D^\nu(P_{\mu,\kappa}^M),\]
		      where we put $D^\nu=\prod_{ij}D_{ij}^{\nu_{ij}}$.

		\item The monomial and descending bases of $\calp_{\bfa,\bfb}(\kappa)$ are dual to one another.
		      That is, for any two multi-indices $\mu$ and $\nu$, we have
		      \[(P_{\mu,\kappa}^M,P_{\nu,\kappa}^D)_\kappa=\delta_{\mu,\nu}.\]
	\end{enumerate}
\end{prop}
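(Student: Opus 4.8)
The plan is to follow the scheme of Ibukiyama--Zagier \cite[\S4, \S9]{Ibukiyama2014Higher}, with Proposition~\ref{prop:Ekappa} and Proposition~\ref{prop:adj} as the two engines. The starting point is the identity $(P,Q)_\kappa=\bfe_\kappa(P\cdot\theta Q)$ together with the fact that $F_{ij}$ acts as multiplication by $t_{ij}$, while $\bfe_\kappa(F_{ij}R)=\bfe_\kappa((D_{j,i}+\kappa\delta_{ij})R)$; in other words, inside the functional $\bfe_\kappa$ one may trade multiplication by a coordinate for a lowering operator $D$. Both assertions reduce to bookkeeping with these trades, organized by induction on the total degree $|\nu|=\sum_{ij}\nu_{ij}$.

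For (1), I would write $(P^M_{\mu,\kappa},P^M_{\nu,\kappa})_\kappa=\bfe_\kappa(P^M_{\mu,\kappa}\cdot\theta P^M_{\nu,\kappa})$. Since $\theta$ replaces $T$ by ${}^tT$ and conjugates coefficients, and since $P^M_{\nu,\kappa}\equiv T^\nu$ modulo the diagonal ideal $(t_{1,1},\dots,t_{n,n})$ by the characterization in Definition-Proposition~\ref{cor:monomial}, we have $\theta P^M_{\nu,\kappa}\equiv T^{{}^t\nu}$ modulo the same ideal. Because $\nu\in\caln_0(\bfa,\bfb)$ is off-diagonal, multiplication by the leading monomial $T^{{}^t\nu}=\prod_{ij}F_{ij}^{\nu_{ji}}$ can be pushed through $\bfe_\kappa$ by repeated use of Proposition~\ref{prop:Ekappa} with $\delta_{ij}=0$, turning it into $\prod_{ij}D_{j,i}^{\nu_{ji}}=D^\nu$ applied to $P^M_{\mu,\kappa}$; as $D^\nu P^M_{\mu,\kappa}$ is a constant and $\bfe_\kappa$ fixes constants, this yields the right-hand side $D^\nu(P^M_{\mu,\kappa})$. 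The main obstacle lies precisely here: converting several factors $F_{ij}$ to $D_{j,i}$ one at a time produces commutator terms $[D_{j,i},F_{k,l}]$ (expressible through $E$, $E'$ via the Lie-algebra relations), and one must also show that the diagonal-ideal remainder of $\theta P^M_{\nu,\kappa}$ contributes nothing. Both are controlled by the induction on $|\nu|$, using the pluriharmonicity $D_{i,i}P^M_{\mu,\kappa}=0$ and the grading $\bbc[T]_{\bfa,\bfb}$ to kill the unwanted terms, exactly as in \cite[\S4]{Ibukiyama2014Higher}.

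For (2), I would deduce the duality from (1) by a uniqueness argument. Both $\{P^M_{\mu,\kappa}\}$ and $\{P^D_{\nu,\kappa}\}$ are bases of $\calp_{\bfa,\bfb}(\kappa)$, so it suffices to check that the basis $\{\widetilde P_\nu\}$ dual to $\{P^M_{\mu,\kappa}\}$ with respect to $(\cdot,\cdot)_\kappa$ satisfies the two properties characterizing the descending basis in Definition-Proposition~\ref{prop:descending}, and then to invoke its uniqueness. The normalization $\widetilde P_0=1$ is immediate from Corollary~\ref{cor:orthogonal}, which gives $\bfe_\kappa(P^M_{\mu,\kappa})=(P^M_{\mu,\kappa},1)_\kappa=\delta_{\mu,0}$. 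For the lowering relation $D_{ij}\widetilde P_\nu=\widetilde P_{\nu-e_{ij}}$ ($i\neq j$), I would pair against an arbitrary monomial basis element and move $D_{ij}$ across via the adjoint formula $D_{ij}^*=D_{j,i}-E_{ij}-E'_{j,i}+F_{ij}$ of Proposition~\ref{prop:adj}: $(P^M_{\mu,\kappa},D_{ij}\widetilde P_\nu)_\kappa=(D_{ij}^*P^M_{\mu,\kappa},\widetilde P_\nu)_\kappa$, and then identify the coefficient of $P^M_{\nu,\kappa}$ in the pluriharmonic component of $D_{ij}^*P^M_{\mu,\kappa}$ as $\delta_{\mu,\nu-e_{ij}}$; here the graded decomposition $\bbc[T]_{\bfa,\bfb}=\bigoplus_m\delta(T)^m\calp_{\bfa-m,\bfb-m}(\kappa)$ of Corollary~\ref{cor:decomposition}, together with $F_{ii}^*=F_{ii}$, guarantees that only the pluriharmonic part survives the pairing. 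Uniqueness of the descending basis then gives $\widetilde P_\nu=P^D_{\nu,\kappa}$, which is exactly (2). Alternatively, (2) can be proved directly through the reproducing-type identity $(P,P^D_{\nu,\kappa})_\kappa=[\text{coefficient of }T^\nu\text{ in }P]$ for pluriharmonic $P$, established by the same degree induction as in \cite[\S9]{Ibukiyama2014Higher}.
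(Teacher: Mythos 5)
Your proposal is correct and follows essentially the same route as the paper, which for this proposition simply cites the Corollary to \S9 Proposition~2 and Theorem~6 of Ibukiyama--Zagier: both rest on trading $F_{ij}$ for $D_{j,i}$ inside $\bfe_\kappa$ (Proposition~\ref{prop:Ekappa}), the adjoint formulas of Proposition~\ref{prop:adj}, and induction on $|\nu|$, with the commutator bookkeeping and the orthogonality of the $\delta(T)^{\mathbf{m}}$-components handled exactly as in the cited source. Your dual-basis/uniqueness reformulation of (2) is a cosmetic repackaging of the same argument, not a genuinely different method.
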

\begin{proof}
	See Corollary to  \S9 Proposition~2 in \cite{Ibukiyama2014Higher} for (1)
	and Theorem~6 in \cite{Ibukiyama2014Higher} for (2).
\end{proof}

\begin{ex}[$n=2$]
	In this case, we have $\caln_0(\bfa,\bfb)=1$ if $a_1=b_2,\ a_2=b_1$ and $\caln_0(\bfa,\bfb)=0$ otherwise.
	Then consider the case $a_1=b_2\leq a_2=b_1$.
	By computing the recursion formula \eqref{recursion}, We can easily see that
	$P^M_{\nu,\kappa}$ is given by
	\[P^M_{\nu,\kappa}=\sum_{i=0}^{a_1}\frac{(-1)^i(a_1)_i\cdot(a_2)_i}{i!(a_1+a_2+\kappa-2)_i}T^{\nu_i},\]
	where $\nu_i=\begin{pmatrix}i&a_1-i\\a_2-i&i\end{pmatrix}\in\caln(\bfa,\bfb)$ for $0\leq i\leq a_1$.
\end{ex}

Then, we will construct the monomial and descending bases of $\calp^{\bfn}(\kappa)$ for general $\bfn$.
The descending basis of $\calp^{\bfn}(\kappa)$ consists of exactly the same polynomials $P^D_{\nu,\kappa}(T)$.

\begin{dfnprop}
	We assume that $\kappa\in\bbc\backslash\bbz_{<n}$.
	The polynomial $P^D_{\nu,\kappa}(T)\in \calp_{\bfa,\bfb}(\kappa)$ defined in the Definition-Proposition~\ref{prop:descending} is an element of $\calp^\bfn_{\bfa,\bfb}(\kappa)$ if and only if $\nu\in\caln^\bfn_0(\bfa,\bfb)$.
	In particular, $\left\{P^D_{\nu,\kappa}(T) \mid \nu\in\caln^\bfn_0(\bfa,\bfb) \right\}$ is a basis of $\calp^{\bfn}_{\bfa,\bfb}(\kappa)$, which is called The descending basis of $\calp_{\bfa,\bfb}^{\bfn}(\kappa)$.
\end{dfnprop}

The monomial basis of $\calp^{\bfn}(\kappa)$ must be slightly modified from the monomial basis of $\calp^{(n)}(\kappa)$
because $P_\nu^M(T)$ is generally not an element of $\calp^{\bfn}_{\bfa,\bfb}(\kappa)$ even if $\nu$ is in $\caln^\bfn_0(\bfa,\bfb)$.

We write the block decomposition of $T$ for the
partition $\bfn$ as $T = (T_{p,q})$, where $T_{p,q}$ is an n $(n_p,n_q)$-matrix.

\begin{prop}
	Assume that $\kappa\in\bbc\backslash\bbz_{<n}$ and that $(*,*)_\kappa$ is positive definite.
	Then, for each multi-index $\nu\in\caln^\bfn_0(\bfa,\bfb)$,
	there exists a unique polynomial $P^{M,\bfn}_{\nu,\kappa}\in\calp^{\bfn}_{\bfa,\bfb}(\kappa)$ such that
	\[P^{M,\bfn}_{\nu,\kappa}(T)=T^\nu+Q(T),\]
	where $Q(T)|_{T_{1,1}=T_{2,2}=\cdots=T_{n,n}=0}=0$.
	These polynomials $\left\{P^{M,\bfn}_{\nu,\kappa} \right\}_{\nu\in \caln^\bfn_0(\bfa,\bfb) }$ form a basis of $\calp^{\bfn}_{\bfa,\bfb}(\kappa)$, which is called The monomial basis of $\calp_{\bfa,\bfb}^{\bfn}(\kappa)$.
\end{prop}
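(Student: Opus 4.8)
The plan is to recast the proposition as the bijectivity of a single linear map and to read that bijectivity off from positive-definiteness of $(\cdot,\cdot)_\kappa$ together with the duality between the monomial and descending bases of the full space $\calp_{\bfa,\bfb}(\kappa)$. Introduce the linear map $\Psi\colon\calp^\bfn_{\bfa,\bfb}(\kappa)\to\bbc^{\caln^\bfn_0(\bfa,\bfb)}$ sending $P$ to the family of its coefficients at the monomials $T^\nu$, $\nu\in\caln^\bfn_0(\bfa,\bfb)$. Since a monomial $T^\mu$ survives the substitution $T_{1,1}=\cdots=T_{d,d}=0$ (i.e.\ setting the diagonal blocks to zero) precisely when $\mu\in\caln^\bfn_0(\bfa,\bfb)$, the asserted existence and uniqueness of $P^{M,\bfn}_{\nu,\kappa}=T^\nu+Q$ with $Q$ vanishing on the diagonal blocks is exactly the assertion that $\Psi$ is an isomorphism, and the basis statement then follows since an isomorphism carries the preimages of the standard basis vectors to a basis. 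Because the descending basis of the preceding Definition-Proposition already gives $\dim\calp^\bfn_{\bfa,\bfb}(\kappa)=\#\caln^\bfn_0(\bfa,\bfb)$, it suffices to prove that $\Psi$ is injective.

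The decisive step is a coefficient-extraction identity: for every $P\in\calp_{\bfa,\bfb}(\kappa)$ and every $\nu\in\caln_0(\bfa,\bfb)$, the coefficient of $T^\nu$ in $P$ equals $(P,P^D_{\nu,\kappa})_\kappa$. I would prove this by expanding $P=\sum_{\mu\in\caln_0(\bfa,\bfb)}c_\mu P^M_{\mu,\kappa}$ in the monomial basis; the duality $(P^M_{\mu,\kappa},P^D_{\nu,\kappa})_\kappa=\delta_{\mu,\nu}$ gives $(P,P^D_{\nu,\kappa})_\kappa=c_\nu$, while the normalization $P^M_{\mu,\kappa}=T^\mu+Q_\mu$ with $Q_\mu$ in the ideal $(t_{1,1},\ldots,t_{n,n})$ shows that the coefficient of $T^\nu$ in $P$ is also $c_\nu$ (no $Q_\mu$ contributes a monomial $T^\nu$ with $\nu\in\caln_0$). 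Restricting to $P\in\calp^\bfn_{\bfa,\bfb}(\kappa)\subset\calp_{\bfa,\bfb}(\kappa)$ and to $\nu\in\caln^\bfn_0(\bfa,\bfb)\subset\caln_0(\bfa,\bfb)$, this identifies the $\nu$-th component of $\Psi$ with the pairing $P\mapsto(P,P^D_{\nu,\kappa})_\kappa$.

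With this identification, injectivity is immediate from positive-definiteness. If $\Psi(P)=0$ then $(P,P^D_{\nu,\kappa})_\kappa=0$ for all $\nu\in\caln^\bfn_0(\bfa,\bfb)$; since these $P^D_{\nu,\kappa}$ span $\calp^\bfn_{\bfa,\bfb}(\kappa)$ (using that $P^D_{\nu,\kappa}$ lies in $\calp^\bfn_{\bfa,\bfb}(\kappa)$ exactly for $\nu\in\caln^\bfn_0(\bfa,\bfb)$) and $P$ itself lies in $\calp^\bfn_{\bfa,\bfb}(\kappa)$, we get $(P,P)_\kappa=0$, hence $P=0$. Combined with the dimension count this makes $\Psi$ an isomorphism, and reading the equivalence ``$\Psi(P)$ is the standard basis vector at $\nu$'' backwards yields the polynomials $P^{M,\bfn}_{\nu,\kappa}$ with the stated normalization and basis property. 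I expect the coefficient-extraction identity to be the only real obstacle: one must check that the conjugate-linearity of $(\cdot,\cdot)_\kappa$ causes no trouble (it does not, since $P$ is expanded in the first argument and the relevant pairings equal $\delta_{\mu,\nu}$), and one must verify carefully that the inclusion $\calp^\bfn_{\bfa,\bfb}(\kappa)\subset\calp_{\bfa,\bfb}(\kappa)$ genuinely lets one import the full-space duality rather than rebuild a monomial basis for $\calp^\bfn$ directly, which is the very difficulty the proposition is designed to circumvent.
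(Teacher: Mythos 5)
Your argument is correct, and every ingredient it uses is available at this point in the paper: the descending basis $\{P^D_{\nu,\kappa}\}_{\nu\in\caln^\bfn_0(\bfa,\bfb)}$ of $\calp^\bfn_{\bfa,\bfb}(\kappa)$ from the preceding Definition--Proposition (which supplies the dimension count), the full-space monomial basis (which exists under $\kappa\in\bbc\backslash\bbz_{<n}$ since $\Xi\subseteq\bbz_{\leq 1}$), the duality $(P^M_{\mu,\kappa},P^D_{\nu,\kappa})_\kappa=\delta_{\mu,\nu}$, and the inclusion $\calp^\bfn_{\bfa,\bfb}(\kappa)\subseteq\calp_{\bfa,\bfb}(\kappa)$ (valid because $I(\bfn)$ contains all diagonal pairs $(i,i)$). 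The paper itself gives no proof, deferring to Ibukiyama's Theorem~4.3 and Lemma~4.4; the Remark that follows reveals the intended construction, namely correcting $P^M_{\nu,\kappa}$ by a combination $\sum_{\mu}c_\mu P^M_{\mu,\kappa}$ over $\mu\in\caln_0(\bfa,\bfb)\backslash\caln^\bfn_0(\bfa,\bfb)$, the coefficients $c_\mu$ being obtained by inverting a Gram matrix of monomial basis elements --- which is exactly where positive definiteness enters there. Your route packages the same two inputs (monomial/descending duality and positive definiteness) differently: the coefficient-extraction identity identifies the coefficient-restriction map $\Psi$ with the family of pairings $P\mapsto(P,P^D_{\nu,\kappa})_\kappa$, after which injectivity is immediate and the dimension count finishes the job. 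This is cleaner in that it establishes existence and uniqueness simultaneously without exhibiting the correction terms, while the Gram-matrix construction has the advantage of being effective (it is what one would implement to compute $P^{M,\bfn}_{\nu,\kappa}$). The two points you flag at the end are the right ones to make explicit in a final write-up: the pairing is conjugate-linear in the second slot, so $P$ must be expanded in the first argument throughout, and for non-real $\kappa$ the hypothesis that $(\ast,\ast)_\kappa$ be positive definite is genuinely needed rather than automatic.
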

\begin{proof}
	See \cite[Theorem~4.3, Lemma~4.4]{Ibukiyama2020generic} for the proof.
\end{proof}
\begin{rem}
	$P^{M,\bfn}_{\nu,\kappa}$ can be written concretely as follows, in the same way as \cite[Remark~4.5]{Ibukiyama2020generic}.

	Let $c_\mu$ ($\mu\in\caln_0(\bfa,\bfb)\backslash\caln^\bfn_0(\bfa,\bfb)$) be the solution of the following linear equations:
	\[\sum_{\mu\in\caln_0(\bfa,\bfb)\backslash\caln^\bfn_0(\bfa,\bfb)}c_\mu(P^{M}_{\xi,\kappa}(T),P^{M}_{\mu,\kappa}(T))_\kappa=-(P^{M}_{\xi,\kappa}(T),P^{M}_{\nu,\kappa}(T))_\kappa
		\quad \text{ for all } \kappa\in\caln_0(\bfa,\bfb)\backslash\caln^\bfn_0(\bfa,\bfb).\]
	Then, we have
	\[P^{M,\bfn}_{\nu,\kappa}(T)
		=P^{M}_{\nu,\kappa}(T)+\sum_{\mu\in\caln_0(\bfa,\bfb)\backslash\caln^\bfn_0(\bfa,\bfb)}
		c_\mu P^{M}_{\mu,\kappa}(T).\]
\end{rem}

\subsection{A generating function for the descending basis}
We fix an positive integer $n$ and a partition $\bfn=(n_1,\ldots,n_d)$ of $n$.
Let $T,X$ be $(n,n)$-matrices of variables.
We write $X$ by matrix blocks as $X=(X_{p,q})$, where $X_{p,q}$ is an $(n_p,n_q)$-matrix for $1\leq p,q\leq d$.
We assume that $X_{p,p}=0$ for all $p\in\left\{1,\ldots,d\right\}$.

In this subsection, we will construct a generating function $G^\bfn(X,T)=\sum_{\nu\in\caln_0^\bfn}P_\nu(T)X^\nu\in\calk[T][[X]]$ for the descending basis of $\calp^{\bfn}(\kappa)$, that is, which satisfies
\begin{equation}\label{eq:descending_generating}
	D_{ij}P_\nu(T)=c_\nu P_{\nu-e_{ij}}(T)
\end{equation}
with some constants $c_\nu$ for any multi-index $\nu\in\caln_0^\bfn$.
The calculation method is basically the same as in \cite{Ibukiyama2014Higher}.
However, in \cite{Ibukiyama2014Higher},
there is an assumption that X and T are symmetric matrices,
and for this reason the subscripts and coefficients are slightly different,
so we will explain in detail for the reader.

First, we show the case of $\bfn=(1,1,\ldots,1)$.
We put $\calk=\bbc(\kappa)$
We denote by $\calk[[X]]$ the vector space of formal power series in the components of $X$.
Let $\lambda$ be an independent variable and set
\[ P=P_\lambda=X^{-1}+\lambda ^t\!T ,\qquad \Delta=\Delta_\lambda=\det(P_\lambda).\]
We define polynomials $\sigma_i (X{}^t\!T)$ in $t_{ij}$ and $x_{ij}$ by the relation
\begin{equation}\label{eq:sigma}
	\Delta_\lambda=\det(X^{-1})\sum_{i=0}^n\sigma_i(X{}^t\!T)\lambda^i.
\end{equation}
Here, we have $\sigma_0=1$.
We put \[V=\calk[[s_1,s_2,\ldots]], \qquad V_k=V_{\deg=k}.\] and the degrees are determined by $\deg(s_a)=a$.
For $F\in V$, we define the function $\widetilde{F}\in\calk[T][[X]]$ as
\[\widetilde{F}(X,T)=F(\sigma_0(X{}^t\!T),\sigma_1(X{}^t\!T),\ldots,\sigma_n(X{}^t\!T),0, 0,\ldots).\]
We set $\partial_a={\partial}/{\partial s_a}$ and $\partial_{ij}={\partial}/{\partial t_{ij}}$.
Under the map $F\mapsto \widetilde{F}$, calculate what $D_{ij}$ in \eqref{eq:Dij} corresponds to.
For convenience, we define coefficients $\epsilon(m), \epsilon^\pm_{a,b}(m)\in\left\{0,\pm 1\right\}$
for $a,b,m\in\bbz$ as follows:
\begin{align*}
	\epsilon(m)         & =\begin{cases}1 & \text{if } m\geq0, \\
             0 & \text{if } m<0,\end{cases}     \\
	\epsilon^+_{a,b}(m) & =\begin{cases}+1 & \text{if } a,b\geq m, \\
             -1 & \text{if } a,b < m,   \\
             0  & \text{otherwise },\end{cases} \\
	\epsilon^-_{a,b}(m) & =\begin{cases}+1 & \text{if } b<m\leq a, \\
             -1 & \text{if } a<m\leq b, \\
             0  & \text{otherwise }.\end{cases} \\
\end{align*}

\begin{prop}
	For $p\geq 0$, we define a second order differential operator $\call_p$ on $V$ by
	\[\call_p = (\kappa+1-p)\partial_p+\sum_{a,b\geq 1}\epsilon^+_{a,b}(p)s_{a+b-p}\partial_a\partial_b,\]
	where $s_0=1$ and $s_p=0$ if $p<0$.
	Then, we have
	\[D_{ij}(\widetilde{F})=\sum_{p=1}^n\partial_{ij}(\sigma_p(X{}^t\!T))\widetilde{\call_p(F)}.\]
\end{prop}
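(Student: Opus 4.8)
The plan is to reduce the stated operator identity to two scalar identities about the functions $\sigma_p=\sigma_p(X\,{}^tT)$, and then to prove those identities by a generating‑function computation in the auxiliary parameter $\lambda$ (and a second parameter $\mu$). First I would apply $D_{ij}=\kappa\partial_{ij}+\sum_{k,l}t_{kl}\partial_{il}\partial_{kj}$ to $\widetilde{F}$ using the chain rule. Since $\widetilde{F}$ depends on $T$ only through $\sigma_1,\dots,\sigma_n$ (substitution $s_a\mapsto\sigma_a$, with the conventions $\sigma_0=1$ and $\sigma_m=0$ for $m<0$ or $m>n$), differentiating twice splits $D_{ij}\widetilde{F}$ into a first‑order part $\sum_q\bigl(\kappa\partial_{ij}\sigma_q+\sum_{k,l}t_{kl}\partial_{il}\partial_{kj}\sigma_q\bigr)\widetilde{\partial_qF}$ and a second‑order part $\sum_{a,b}\bigl(\sum_{k,l}t_{kl}(\partial_{il}\sigma_a)(\partial_{kj}\sigma_b)\bigr)\widetilde{\partial_a\partial_bF}$. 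Expanding the claimed right‑hand side the same way, using $\widetilde{s_mG}=\sigma_m\widetilde{G}$, reduces the proposition to matching coefficients of $\widetilde{\partial_qF}$ and $\widetilde{\partial_a\partial_bF}$, i.e. to the two identities
\[
\text{(I)}\quad \sum_{k,l}t_{kl}\partial_{il}\partial_{kj}\sigma_q=(1-q)\partial_{ij}\sigma_q,
\qquad
\text{(II)}\quad \sum_{k,l}t_{kl}(\partial_{il}\sigma_a)(\partial_{kj}\sigma_b)=\sum_{p}\epsilon^+_{a,b}(p)\,\sigma_{a+b-p}\,\partial_{ij}\sigma_p.
\]

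The key computational device is that $P_\lambda=X^{-1}+\lambda\,{}^tT$ is affine in the $t_{ij}$, so $\partial_{ij}P_\lambda=\lambda e_{ji}$ is rank one. Writing $Q=P_\lambda^{-1}$ and $\Delta_\lambda=\det P_\lambda$, the Jacobi formula $\partial\det P=\det P\cdot\mathrm{tr}(P^{-1}\partial P)$ and $\partial P^{-1}=-P^{-1}(\partial P)P^{-1}$ give the clean formulas $\partial_{ij}\Delta_\lambda=\lambda\Delta_\lambda Q_{ij}$ and $\partial_{il}\partial_{kj}\Delta_\lambda=\lambda^2\Delta_\lambda\bigl(Q_{il}Q_{kj}-Q_{kl}Q_{ij}\bigr)$. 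I would then record the contraction rules $\sum_{k,l}t_{kl}Q_{il}Q_{kj}=(Q\,{}^tT\,Q)_{ij}$ and $\sum_{k,l}t_{kl}Q_{kl}=\mathrm{tr}(Q\,{}^tT)$, and work throughout with the generating series $\Sigma(\lambda)=\det(X)\Delta_\lambda=\sum_p\sigma_p\lambda^p$, from which the $\sigma_p$‑identities follow by extracting the $\lambda^p$‑coefficient.

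For identity (I) I would use $\partial_\lambda P_\lambda={}^tT$, which turns the contraction rules into $Q\,{}^tT\,Q=-\partial_\lambda Q$ and $\mathrm{tr}(Q\,{}^tT)=\partial_\lambda\log\Delta_\lambda$. Contracting the second derivative formula with $t_{kl}$ then collapses the whole expression to $-\lambda^2\partial_\lambda(\Delta_\lambda Q_{ij})$, and using $\Delta_\lambda Q_{ij}=\lambda^{-1}\partial_{ij}\Delta_\lambda$ this equals $\partial_{ij}\Delta_\lambda-\lambda\partial_\lambda\partial_{ij}\Delta_\lambda$. Since the Euler operator $\lambda\partial_\lambda$ multiplies the $\lambda^q$‑coefficient by $q$, reading off that coefficient of $\Sigma(\lambda)$ yields exactly (I). For identity (II) I would introduce a second parameter $\mu$ and compute $\sum_{k,l}t_{kl}(\partial_{il}\Delta_\lambda)(\partial_{kj}\Delta_\mu)=\lambda\mu\Delta_\lambda\Delta_\mu\,(Q^{(\lambda)}\,{}^tT\,Q^{(\mu)})_{ij}$; the resolvent identity $Q^{(\lambda)}-Q^{(\mu)}=(\mu-\lambda)Q^{(\lambda)}\,{}^tT\,Q^{(\mu)}$ then gives the closed form
\[
\sum_{k,l}t_{kl}(\partial_{il}\Sigma(\lambda))(\partial_{kj}\Sigma(\mu))
=\frac{\mu\,\Sigma(\mu)\,\partial_{ij}\Sigma(\lambda)-\lambda\,\Sigma(\lambda)\,\partial_{ij}\Sigma(\mu)}{\mu-\lambda}.
\]

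The final step is to extract the coefficient of $\lambda^a\mu^b$ from this closed form. Expanding the numerator and using the elementary identity for $\bigl(\mu^{q+1}\lambda^r-\lambda^{q+1}\mu^r\bigr)/(\mu-\lambda)$ produces the coefficient $\sum_p\eta(p)\,\sigma_{a+b-p}\partial_{ij}\sigma_p$, and one checks that $\eta(p)=\epsilon^+_{a,b}(p)$. I expect this last bookkeeping to be the main obstacle: one must verify that the two index ranges arising after cancelling the pole at $\lambda=\mu$ reproduce precisely the three‑case sign convention ($+1$ when $a,b\ge p$, $-1$ when $a,b<p$, $0$ otherwise), which is where careful attention to the ranges and to the conventions $\sigma_0=1$, $\sigma_m=0$ for $m<0$ or $m>n$ is required. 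Once (I) and (II) are in hand, substituting them back into the chain‑rule expansion matches $D_{ij}\widetilde{F}$ termwise with $\sum_{p=1}^n\partial_{ij}(\sigma_p)\,\widetilde{\mathcal{L}_p(F)}$, completing the proof; everything outside the coefficient extraction is routine matrix calculus.
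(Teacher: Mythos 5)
Your proposal is correct and follows essentially the same route as the paper: the reduction to the two scalar identities (I) and (II) is exactly the paper's reduction to monomials $1$, $s_a$, $s_as_b$ of degree $\le 2$, and the proofs via $\partial_{ij}\Delta_\lambda=\lambda\Delta_\lambda(P_\lambda^{-1})_{ij}$, the Euler operator $\lambda\,d/d\lambda$, the resolvent identity ${}^tT=(P_\lambda-P_\mu)/(\lambda-\mu)$, and extraction of the $\lambda^a\mu^b$-coefficient of $(\lambda^{q+1}\mu^p-\mu^{q+1}\lambda^p)/(\lambda-\mu)$ are the same computations (the paper phrases the second-derivative formula in terms of cofactors $\Delta_\lambda^{j,l;i,k}$ rather than $\partial P^{-1}=-P^{-1}(\partial P)P^{-1}$, but these are equivalent). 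The final sign bookkeeping you flag does reproduce $\epsilon^+_{a,b}(p)$ exactly as you expect.
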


\begin{proof}
	Since both sides of the equation are second order derivatives,
	it is sufficient to prove their equality for monomials of degree $\leq 2$ in $\calk[s_1, s_2, \ldots ] $, i.e., 1, $s_a$, $s_as_b$ ($a, b \geq 1$).
	The first case is trivial.
	We should show that
	\begin{equation}\label{eq:second}
		D_{ij}(\sigma_p)=(\kappa+1-p)\partial_{ij}(\sigma_p)
	\end{equation}
	for the second case,
	and
	\begin{equation}\label{eq:third}
		D_{ij}(\sigma_a\sigma_b)=D_{ij}(\sigma_a)\sigma_b+\sigma_aD_{ij}(\sigma_b)
		+2\sum_{p=1}^n \epsilon^+_{a,b}(p)\partial_{ij}(\sigma_p)\sigma_{a+b-p}
	\end{equation}
	for the third case.
	We can easily see that \[\partial_{ij}(\Delta_\lambda)=\lambda\Delta_\lambda^{j;i},\]
	where $\Delta_\lambda^{j;i}$ is the determinant of the $(j,i)$-cofactor of $P_\lambda$.
	Thus, we have
	\[\left(1-\lambda\frac{d}{d\lambda}\right)\partial_{ij}(\Delta_\lambda)
		=\left(1-\lambda\frac{d}{d\lambda}\right)\left(\lambda\Delta_\lambda^{j;i}\right)
		=-\lambda^2\sum_{k,l=1}^nt_{k,l}\Delta_\lambda^{j,l;i,k}
		=\lambda^2\sum_{k,l=1}^nt_{k,l}\Delta_\lambda^{j,l;k,i},\]
	where $\Delta_\lambda^{j,l;i,k}$ is the determinant of the $(n-2,n-2)$-matrix obtained from $P$ by omitting
	its $j$-th and $l$-th rows and $i$-th and $k$-th columns, multiplied by
	$(-1)^{j+l+i+k}\mathrm{sgn}(j - l)\mathrm{sgn}(i - k) $
	(and $\Delta_\lambda^{i,k;j,l} = 0$ if $i = k$ or $j = l$).
	Therefore we have,
	\[(D_{ij}-\kappa\partial_{ij})\Delta_\lambda
		=\sum_{k,l=1}^nt_{k,l}\partial_{i,l}\partial_{k,j}(\Delta_\lambda)
		=\lambda^2\sum_{k,l=1}^nt_{k,l}\Delta_\lambda^{j,l;k,i}
		=\left(1-\lambda\frac{d}{d\lambda}\right)\partial_{ij}(\Delta_\lambda),\]
	that is,
	\[D_{ij}\Delta_\lambda=\left(\kappa+1-\lambda\frac{d}{d\lambda}\right)\partial_{ij}(\Delta_\lambda)\]
	and comparing the coefficients of $\lambda^p$ in both sides, we obtain \eqref{eq:second}.

	Next, we show \eqref{eq:third}.
	Note that $\partial_{ij}(\Delta_\lambda)=\lambda\Delta_\lambda(P_\lambda^{-1})_{ij}$,
	so for independent variables $\lambda$ and $\mu$, we have
	\begin{align*}
		D_{ij}(\Delta_\lambda\Delta_\mu)-D_{ij}(\Delta_\lambda)\Delta_\mu- & \Delta_\lambda D_{ij}(\Delta_\mu)
		=\sum_{k,l=1}^nt_{k,l}\left(\partial_{i,l}(\Delta_\lambda)\partial_{k,j}(\Delta_\mu)
		-\partial_{k,j}(\Delta_\lambda)\partial_{i,l}(\Delta_\mu)\right)                                                                                                                                   \\
		                                                                   & =\lambda\mu\Delta_\lambda\Delta_\mu\sum_{k,l=1}^n\left((P_\lambda^{-1})_{i,l}t_{k,l}(P_\mu^{-1})_{k,j}
		+(P_\lambda^{-1})_{k,j}t_{k,l}(P_\mu^{-1})_{i,l}\right)                                                                                                                                            \\
		                                                                   & =\lambda\mu\Delta_\lambda\Delta_\mu\left(P_\lambda^{-1}{}^tTP_\mu^{-1}+P_\mu^{-1}{}^tTP_\lambda^{-1}\right)_{ij}              \\
		                                                                   & =-\lambda\mu\Delta_\lambda\Delta_\mu\left(\frac{P_\lambda^{-1}-P_\mu^{-1}}{\lambda-\mu}\right)_{ij}                           \\
		                                                                   & =-\frac{1}{\lambda-\mu}\left(\mu\Delta_\mu\partial_{ij}(\Delta_\lambda)-\lambda\Delta_\lambda\partial_{ij}(\Delta_\mu)\right) \\
		                                                                   & =\det(X)^{-2}\sum_{p,q=0}^n\sigma_q\partial_{ij}(\sigma_p)
		\frac{\lambda^{q+1}\mu^p-\mu^{q+1}\lambda^p}{\lambda-\mu}.
	\end{align*}
	Here, fourth equality holds because ${}^tT=(P_\lambda-P_\mu)/(\lambda-\mu)$.
	Comparing the coefficients of $\lambda^a\mu^b$ in both sides, we obtain \eqref{eq:third}.
\end{proof}

The following corollary is an immediate consequence of the above proposition.
\begin{cor}\label{cor:DW}
	Let $W=\cap_{p\geq 2}\ker(\call_p)\in V$.
	Then, for any $F\in W$, we have $D_{i,i}(\widetilde{F})=0$ and $D_{ij}(\widetilde{F})=x_{ij}\widetilde{\call_1(F)}$ for $i\neq j$.
\end{cor}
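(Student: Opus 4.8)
The plan is to read the result off directly from the preceding proposition by specializing to $F\in W$. First I would recall that the proposition gives, for every $F\in V$, the identity
\[
	D_{ij}(\widetilde{F})=\sum_{p=1}^n\partial_{ij}(\sigma_p(X{}^t\!T))\,\widetilde{\call_p(F)}.
\]
Since $W=\bigcap_{p\geq 2}\ker(\call_p)$, for $F\in W$ every summand with $p\geq 2$ vanishes, so the sum collapses to the single term
\[
	D_{ij}(\widetilde{F})=\partial_{ij}(\sigma_1(X{}^t\!T))\,\widetilde{\call_1(F)}.
\]

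The only remaining point is to compute $\partial_{ij}(\sigma_1)$. From the defining relation \eqref{eq:sigma} together with the factorization $\Delta_\lambda=\det(X^{-1})\det(I_n+\lambda X{}^t\!T)$, the polynomials $\sigma_i(X{}^t\!T)$ are exactly the coefficients of the characteristic polynomial of $X{}^t\!T$, i.e.\ its elementary symmetric functions; in particular $\sigma_1(X{}^t\!T)=\tr(X{}^t\!T)=\sum_{k,l}x_{k,l}t_{k,l}$. Differentiating in $t_{ij}$ then gives $\partial_{ij}(\sigma_1)=x_{ij}$, so that
\[
	D_{ij}(\widetilde{F})=x_{ij}\,\widetilde{\call_1(F)}
\]
for all $i,j$.

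Finally I would invoke the standing assumption of this subsection that $X_{p,p}=0$ for every diagonal block; in the case $\bfn=(1,\ldots,1)$ treated here this means $x_{i,i}=0$ for each $i$. Setting $j=i$ in the displayed identity therefore yields $D_{i,i}(\widetilde{F})=x_{i,i}\widetilde{\call_1(F)}=0$, while for $i\neq j$ the formula $D_{ij}(\widetilde{F})=x_{ij}\widetilde{\call_1(F)}$ is precisely the assertion. There is no genuine obstacle: the corollary is a direct specialization of the proposition, and the only mild verification is the elementary identity $\sigma_1(X{}^t\!T)=\tr(X{}^t\!T)$, which is immediate from the definition of the $\sigma_i$ in \eqref{eq:sigma}.
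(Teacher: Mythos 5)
Your proof is correct and is exactly the intended argument: the paper states the corollary as an immediate consequence of the preceding proposition and gives no further proof, and your specialization (vanishing of the $p\geq 2$ terms on $W$, the identity $\sigma_1(X{}^t\!T)=\tr(X{}^t\!T)$ so $\partial_{ij}\sigma_1=x_{ij}$, and the standing convention $x_{i,i}=0$ to kill the diagonal case) supplies precisely the missing details.
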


\begin{prop}
	The operators $\call_p$ satisfy the commutation formula
	\[\left[\call_{p},\call_{q}\right]
		=-\sum_{\substack{a,b\geq1\\a+b=p+q}}\epsilon^-_{p,q}(b)\partial_{a}\call_b \qquad (p,q\geq 1).\]
\end{prop}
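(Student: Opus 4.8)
The plan is to split each operator into its first- and second-order parts and expand the commutator bilinearly. Write $\call_p = A_p + B_p$ with
\[
A_p = (\kappa+1-p)\partial_p, \qquad B_p = \sum_{a,b\geq 1}\epsilon^+_{a,b}(p)\,s_{a+b-p}\partial_a\partial_b,
\]
so that $[\call_p,\call_q] = [A_p,A_q] + [A_p,B_q] + [B_p,A_q] + [B_p,B_q]$. Since partial derivatives commute, $[A_p,A_q]=0$. For the two cross terms I would use only the relation $[\partial_p,s_m]=\delta_{pm}$: as $\partial_p$ meets nothing but the coefficient $s_{c+d-q}$ inside $B_q$, a one-line computation gives
\[
[A_p,B_q] = (\kappa+1-p)\sum_{\substack{a,b\geq 1\\ a+b=p+q}}\epsilon^+_{a,b}(q)\,\partial_a\partial_b,
\qquad
[B_p,A_q] = -(\kappa+1-q)\sum_{\substack{a,b\geq 1\\ a+b=p+q}}\epsilon^+_{a,b}(p)\,\partial_a\partial_b,
\]
each of which contributes only second-order terms.

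The heart of the computation is $[B_p,B_q]$. Expanding $B_pB_q$ by the Leibniz rule, the pair $\partial_a\partial_b$ from $B_p$ can strike the coefficient $s_{c+d-q}$ of $B_q$ in three ways: both derivatives (which vanishes, since $\partial_a\partial_b s_m=0$), exactly one (yielding a third-order operator via $\partial_a s_m=\delta_{a,m}$), or neither (yielding a fourth-order operator). The fourth-order part of $B_pB_q$ is $\sum\epsilon^+_{a,b}(p)\epsilon^+_{c,d}(q)s_{a+b-p}s_{c+d-q}\partial_a\partial_b\partial_c\partial_d$, and relabeling $(a,b)\leftrightarrow(c,d)$ shows it coincides with the fourth-order part of $B_qB_p$, so it cancels in the commutator. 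What survives is a third-order piece from the ``exactly one'' case together with a second-order remainder; I would merge the two symmetric ``exactly one'' contributions using $\epsilon^+_{a,b}(p)=\epsilon^+_{b,a}(p)$ and the symmetry of $s_{a+b-p}$.

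Finally I would expand the target by inserting $\call_b = (\kappa+1-b)\partial_b + \sum_{c,d}\epsilon^+_{c,d}(b)s_{c+d-b}\partial_c\partial_d$ into $-\sum_{a+b=p+q}\epsilon^-_{p,q}(b)\partial_a\call_b$; this produces a third-order part $-\sum\epsilon^-_{p,q}(b)\epsilon^+_{c,d}(b)s_{c+d-b}\partial_a\partial_c\partial_d$ and a second-order part, the latter receiving contributions both from $\partial_a$ acting on $(\kappa+1-b)\partial_b$ and from $\partial_a$ striking the coefficient $s_{c+d-b}$. Matching the third-order terms of $[B_p,B_q]$ against the target, and matching the second-order terms after folding in $[A_p,B_q]+[B_p,A_q]$, reduces the proposition to a finite family of elementary identities among the step functions $\epsilon^+$ and $\epsilon^-$. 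I expect this last bookkeeping to be the main obstacle: one must disentangle the overlapping constraints $a+b=p+q$ and $a=c+d-q$, and verify that the accumulated signs collapse to exactly $-\epsilon^-_{p,q}(b)$. This is handled by a careful case analysis on the relative orderings of $p$, $q$ and the summation indices, precisely as in the analogous computation of \cite{Ibukiyama2014Higher}.
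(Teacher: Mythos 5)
Your setup is the same direct calculation the paper has in mind (the paper's own proof is only a citation of the analogous computation in \cite{Ibukiyama2014Higher}): split $\call_p=A_p+B_p$ into first- and second-order parts, note $[A_p,A_q]=0$, compute the two cross terms via $[\partial_p,s_m]=\delta_{pm}$, and observe that the fourth-order part of $[B_p,B_q]$ cancels by relabeling. Your closed forms for $[A_p,B_q]$ and $[B_p,A_q]$ are correct. (One minor slip: $[B_p,B_q]$ has no second-order remainder --- the ``exactly one'' terms are all third order, possibly with coefficient $s_0=1$ --- so the entire second-order part of $[\call_p,\call_q]$ comes from the two cross terms.)

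The genuine gap is the step you defer as ``bookkeeping'': verifying that the accumulated coefficients collapse to $-\epsilon^-_{p,q}(b)$. That verification is the whole content of the proposition, and with the definitions as literally stated it does not close. Take $(p,q)=(2,1)$ and compare second-order parts. Your own formulas give
\[
[A_2,B_1]+[B_2,A_1]=(\kappa-1)\sum_{a+b=3}\epsilon^+_{a,b}(1)\,\partial_a\partial_b-\kappa\sum_{a+b=3}\epsilon^+_{a,b}(2)\,\partial_a\partial_b=2(\kappa-1)\partial_1\partial_2,
\]
since $\epsilon^+_{1,2}(1)=\epsilon^+_{2,1}(1)=1$ while $\epsilon^+_{1,2}(2)=\epsilon^+_{2,1}(2)=0$; the right-hand side of the proposition is $-\partial_1\call_2$, whose second-order part is $-(\kappa-1)\partial_1\partial_2$. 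Equivalently, on the test function $s_1^3$ one computes $[\call_2,\call_1]s_1^3=-12$ but $-\partial_1\call_2\,s_1^3=6$. So with the paper's normalizations of $\call_p$ and $\epsilon^{\pm}$ the identity holds with constant $+2$ in place of $-1$, i.e.\ $[\call_p,\call_q]=2\sum_{a+b=p+q}\epsilon^-_{p,q}(b)\partial_a\call_b$; the stated constant appears to be carried over unadjusted from the symmetric case of \cite{Ibukiyama2014Higher}, where the off-diagonal normalizations differ. This discrepancy is harmless for Corollary~\ref{cor:subspace}, which only uses that $\epsilon^-_{p,q}(b)\neq0$ forces $b>\min(p,q)$, but it does mean that your final matching step, carried out as planned, would end in a contradiction rather than a proof: the case analysis on $\epsilon^{\pm}$ has to be done in the hermitian normalization, not waved through by appeal to the symmetric-case computation.
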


\begin{proof}
	This can be done by a direct calculation as \cite[\S9 Proposition~2]{Ibukiyama2014Higher}.
\end{proof}

\begin{cor}\label{cor:subspace}
	The subspace $W$ of $V$ is mapped into itself by $\call_1$.
	More generally, the space $W\left<m\right>:=\cap_{p>m}\ker(\call_p)\subset V$  is mapped into itself by $\call_m$
	for all $m\geq 1$.
\end{cor}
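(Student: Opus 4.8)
The first assertion is the special case $m=1$ of the second, since $W=\cap_{p\ge2}\ker(\call_p)=\cap_{p>1}\ker(\call_p)=W\langle1\rangle$, so I would prove only the general statement. Fix $m\ge1$ and $F\in W\langle m\rangle$, which means $\call_pF=0$ for every $p>m$. The goal is to show that $\call_p(\call_mF)=0$ for every $p>m$, as this is exactly the condition $\call_mF\in\cap_{p>m}\ker(\call_p)=W\langle m\rangle$.

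The plan is to rewrite $\call_p\call_m=\call_m\call_p+[\call_p,\call_m]$ and dispose of the two summands separately. The first term vanishes immediately: since $p>m$ we have $\call_pF=0$, hence $\call_m\call_pF=0$. For the second term I would invoke the commutation formula of the preceding proposition,
\[
	[\call_p,\call_m]=-\sum_{\substack{a,b\ge1\\a+b=p+m}}\epsilon^-_{p,m}(b)\,\partial_a\call_b,
\]
and apply it to $F$. The key observation is that, because $p>m$, the coefficient $\epsilon^-_{p,m}(b)$ is nonzero precisely when $m<b\le p$; the competing branch $p<b\le m$ is vacuous. Hence every surviving summand has $b>m$, and for such $b$ we have $\call_bF=0$ by the definition of $W\langle m\rangle$, so that $\partial_a\call_bF=\partial_a(0)=0$. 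Therefore $[\call_p,\call_m]F=0$, and combining with the first term we conclude $\call_p\call_mF=0$ for all $p>m$, as required.

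The argument is essentially formal once the commutation relation is in hand; the only point requiring care is the index analysis of the sign symbol $\epsilon^-_{p,m}(b)$, where one must confirm that the hypothesis $p>m$ annihilates exactly the range $b\le m$ and leaves only indices $b>m$ on which $F$ is already killed. I expect no genuine obstacle beyond this bookkeeping with the definition of $\epsilon^-$.
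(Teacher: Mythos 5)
Your proof is correct and follows exactly the intended route: the paper states this as an immediate consequence of the preceding commutation formula, and the argument is precisely your decomposition $\call_p\call_m=\call_m\call_p+[\call_p,\call_m]$ together with the observation that $\epsilon^-_{p,m}(b)\neq 0$ with $p>m$ forces $b>m$, so every surviving term $\partial_a\call_b F$ vanishes on $W\langle m\rangle$. No gaps.
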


\begin{prop}
	The space $W_k = W \cap V_k$ is one-dimensional for each $k \geq 0$. If $G_k$ is a non-zero
	element of $W_k$ for every $k \geq 0$,
	then $\call_1(G_k)$ is a non-zero multiple of $G_{k-1}$ for all  $k \geq 1$.
	If we set
	$G =\sum_{k \geq 0} G_k \in W$, then the polynomials $P_\nu\in \calk[T]$ defined as
	$\widetilde{G} =\sum_{\nu} P_\nu(T)X^\nu$ satisfy \eqref{eq:descending_generating}.
\end{prop}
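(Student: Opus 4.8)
The statement packages three assertions, and I would establish them in the order: (1) $\dim_\calk W_k = 1$ for all $k$; (2) $\call_1\colon W_k \to W_{k-1}$ is nonzero for $k \geq 1$; (3) the coefficients $P_\nu$ of $\widetilde G$ satisfy \eqref{eq:descending_generating}. The third is a purely formal consequence of the first two together with Corollary~\ref{cor:DW}, so the real work lies in (1) and (2), both of which I would reduce to a single surjectivity lemma.

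For the dimension count I would exploit the increasing filtration $W = W\langle 1\rangle \subseteq W\langle 2\rangle \subseteq \cdots$ with $\bigcup_m W\langle m\rangle = V$; indeed $W\langle m\rangle \cap V_k = V_k$ once $m \geq k$, since $\call_p$ lowers the $s$-degree by exactly $p$ and hence vanishes on $V_k$ for $p>k$. By Corollary~\ref{cor:subspace}, $\call_m$ maps $W\langle m\rangle$ into itself, and by definition $W\langle m-1\rangle = W\langle m\rangle \cap \ker \call_m$; restricting to $s$-degree $k$ gives the left-exact sequence
\[ 0 \longrightarrow W\langle m-1\rangle \cap V_k \longrightarrow W\langle m\rangle \cap V_k \xrightarrow{\ \call_m\ } W\langle m\rangle \cap V_{k-m}. \]
Writing $W_m(q) = \sum_{k\geq 0}\dim_\calk(W\langle m\rangle\cap V_k)\,q^k$ for the Poincaré series, the \emph{surjectivity} of $\call_m$ (the lemma below) converts this into the recursion $W_{m-1}(q) = (1-q^m)W_m(q)$. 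Telescoping from the stable value $W_\infty(q) = \prod_{a\geq 1}(1-q^a)^{-1}$ (the partition generating function, since $\dim V_k$ counts partitions of $k$) yields
\[ W_1(q) = \Bigl(\prod_{m\geq 2}(1-q^m)\Bigr)\prod_{a\geq 1}(1-q^a)^{-1} = \frac{1}{1-q}, \]
that is, $\dim_\calk W_k = 1$ for every $k$. Assertion (2) is then the $m=1$ instance of the same statement: $\call_1\colon W_k \to W_{k-1}$ surjects onto a one-dimensional space, hence is nonzero, so $\call_1(G_k) = c_k\,G_{k-1}$ with $c_k \neq 0$.

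The main obstacle is therefore the surjectivity of $\call_m\colon W\langle m\rangle\cap V_k \to W\langle m\rangle \cap V_{k-m}$. (The left-exact sequence already gives the coefficientwise inequality $W_{m-1}(q)\geq (1-q^m)W_m(q)$, hence the lower bound $\dim_\calk W_k\geq 1$ for free; only the upper bound $\dim_\calk W_k \leq 1$ needs surjectivity.) I would attack this along the lines of \cite{Ibukiyama2014Higher}, where the analogue for symmetric $X,T$ is proved: either by constructing a right inverse, a ``raising'' operator whose commutators with the $\call_p$ are controlled by the commutation formula $[\call_p,\call_q] = -\sum_{a+b=p+q}\epsilon^-_{p,q}(b)\partial_a\call_b$, so that the $\call_p$ together with the grading generate a manageable algebra on each $W\langle m\rangle$; or, more directly, by a triangularity argument, ordering the monomials of $V_k$ by their number of parts and using that $\call_m$ couples a monomial only to monomials with fewer parts, so that an element of $W\langle m\rangle\cap V_k$ is both determined by and can be prescribed through its image in $W\langle m\rangle\cap V_{k-m}$. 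This bookkeeping, complicated slightly by the terms of $\call_m$ carrying $s_0=1$ which drop the part-count by two, is the technical heart and is where I expect the difficulty to concentrate.

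Finally, for assertion (3) I would expand $\widetilde G = \sum_\nu P_\nu(T)X^\nu$, observing that $\widetilde{G_k}$ is bihomogeneous of degree $(k,k)$ in $(X,T)$ because each $\sigma_i(X{}^tT)$ is, so that $\widetilde{G_k} = \sum_{|\nu|=k}P_\nu(T)X^\nu$. By (2), $\call_1 G = \sum_{j\geq 0}c_{j+1}G_j$, whence $\widetilde{\call_1 G} = \sum_\mu c_{|\mu|+1}P_\mu(T)X^\mu$. Applying Corollary~\ref{cor:DW} gives $D_{i,i}\widetilde G = 0$ and, for $i\neq j$,
\[ \sum_\nu (D_{ij}P_\nu)\,X^\nu = D_{ij}\widetilde G = x_{ij}\,\widetilde{\call_1 G} = \sum_\mu c_{|\mu|+1}P_\mu\,X^{\mu+e_{ij}}. \]
Comparing the coefficient of $X^\nu$ (with the convention $P_{\nu-e_{ij}}=0$ when $\nu_{ij}=0$) yields $D_{ij}P_\nu = c_{|\nu|}\,P_{\nu-e_{ij}}$, which is exactly \eqref{eq:descending_generating} with $c_\nu = c_{|\nu|}\neq 0$, while the diagonal relations confirm $D_{i,i}P_\nu=0$.
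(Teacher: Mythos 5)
Your overall architecture is sound: the treatment of the third assertion (bihomogeneity of $\widetilde{G_k}$ of bidegree $(k,k)$, then comparison of coefficients of $X^\nu$ via Corollary~\ref{cor:DW}) is correct and is exactly what the paper does, and the Poincar\'e-series reduction of the first two assertions is a legitimate reorganization of the dimension count: granting surjectivity of $\call_m\colon W\langle m\rangle\cap V_k\to W\langle m\rangle\cap V_{k-m}$, the recursion $W_{m-1}(q)=(1-q^m)W_m(q)$ together with the fact that $W_m(q)$ agrees with the partition generating function $\prod_{a\geq1}(1-q^a)^{-1}$ through degree $m$ does yield $\dim_\calk W_k=1$, and the nonvanishing of $\call_1$ on $W_k$ follows.

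The genuine gap is that this surjectivity --- which you correctly identify as carrying the entire weight of assertions (1) and (2) --- is never proved; you only name two candidate strategies and defer the bookkeeping. It cannot be waved at as routine: it is precisely the content of the key lemma on which the paper's proof rests, namely that restriction $F\mapsto F(s_1,\ldots,s_m,0,0,\ldots)$ is an isomorphism $W\langle m\rangle\xrightarrow{\ \sim\ }\calk[[s_1,\ldots,s_m]]$ (adapted from Ibukiyama--Zagier, Theorem~10). The mechanism, displayed in the paper right after the proposition, is elementary once stated: writing an element of $\ker\call_p$ as $\sum_r f_r s_p^r/r!$ with $f_r$ free of $s_p$, the equation $\call_p F=0$ becomes the recursion $(\kappa-p+r+1)f_{r+1}=\calm_p(f_r)$, and the coefficient $\kappa-p+r+1$ is a unit in $\calk=\bbc(\kappa)$; so $F$ is both uniquely determined by and reconstructible from its restriction $f_0$, and iterating over $p>m$ (with consistency supplied by the commutation formula) gives injectivity and surjectivity simultaneously. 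I would either carry out this recursion in place of your unproven surjectivity lemma or simply invoke the isomorphism; note also that once it is available, the nonvanishing of $\call_1(G_k)$ is obtained more cheaply from \emph{injectivity}, since $\ker(\call_1)\cap W=W\langle0\rangle\cong\calk$ contains no nonzero element of positive degree --- this is the paper's argument and avoids any appeal to surjectivity of $\call_1$ itself.
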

\begin{proof}
	By the same calculation as \cite[Theorem~10]{Ibukiyama2014Higher}, we have the isomorphism
	\[\begin{array}{ccc}
			W\left<m\right> & \stackrel{\sim}{\longrightarrow} & \calk[[s_1,\ldots,s_m]]      \\
			F               & \longmapsto                      & F(s_1,\ldots,s_m,0,0,\ldots) \\
		\end{array}\]
	for any $m\geq 0$.
	In particular, we have $W \cong \calk[[s_1]]$ and  $W_k$ is one-dimensional for each $k \geq 0$.
	From the definition of $\call_1$ and the Corollary~\ref{cor:subspace}, we have $\call_1(W_k)\subset W_{k-1}$.
	And since $W\left<0\right>\cap V_k\cong\calk\cap V_k=\left\{0\right\}$, $\call_1(G_k)$ is a non-zero element of $W_{k-1}$ for all  $k \geq 1$.
	If we write $\call_1(G_k)=c_kG_{k-1}$ with some constants $c_k\in\calk$,
	we have $D_{i,i}(\widetilde{G}_k)=0$ and $D_{ij}(\widetilde{G_k})=c_kx_{ij}\widetilde{G}_{k-1}$ for $i\neq j$ by the Corollary~\ref{cor:DW}.
	In particular, we have $D_{i,i}(P_\nu)=0$ and $D_{ij}(P_\nu)=c_{|\nu|} P_{\nu-e_{ij}}$ for any $\nu\neq0$,
	where $|\nu|=\sum_{ij}\nu_{ij}$.
\end{proof}
Let $G =\sum_{k \geq 0} G_k $ be a element of $W$ as in the above proposition.
We put
\[G^{(n)}(s_1,\ldots,s_n)=G(s_1,\ldots,s_n,0,0,\ldots)\]
and expand it as a power series
\[G^{(n)}(s_1,\ldots,s_n)=\sum_{r=0}^\infty g_r^{(n)}\frac{s^r_n}{r!}\]
in $s_n$.
For $G^{(n)}$, $\call_n$ is equal to  $(\kappa+1-n)\partial_n+s_n\partial_n^2-\calm_n$,
where \[\calm_n=\sum_{\substack{0< a,b< n\\a+b\geq n}}s_{a+b-n}\partial_a\partial_b.\]
Since $\call_n(G^{(n)})=0$, see each coefficients of $s_n^r$ in both sides of the equation and get
\[(\kappa-n+r+1)g_{r+1}^{(n)}=\calm_n(g_r^{(n)})\]
for $r\geq 0$.
We note that $g_0^{(n)}=G^{(n-1)}$, we have
\[g_{r}^{(n)}=\frac{1}{(\kappa-n+1)^{(r)}}(\calm_n)^r(G^{(n-1)})\]
where $(x)^{(r)}=x(x+1)\cdots(x+r-1)$ is ascending Pochhammer symbol.
Since the operators $\calm_n$ and (multiplication by) $s_n$ commute,
\[G^{(n)}=\mathbb{J}_{\kappa-n}(s_n\calm_n)(G^{(n-1)})\]
where
\[\mathbb{J}_\nu(x)=\sum_{r=0}^\infty\frac{x^r}{r!(\nu+1)^{(r)}}.\]
Induction on $n$ gives
\begin{equation}\label{eq:generating}
	G^{(n)}=\mathbb{J}_{\kappa-n}(s_n\calm_n)\mathbb{J}_{\kappa-n+1}(s_{n-1}\calm_{n-1})\cdots\mathbb{J}_{\kappa-2}(s_2\calm_2)(G^{(1)}(s_1)).
\end{equation}
Here, we can take $G^{(1)}(s_1)$ as an arbitrary function.
Then $\widetilde{G^{(n)}}(X,T)$ is a generating function for the descending basis of $\calp^{(n)}(\kappa)$.
For a general $\bfn=(n_1,\ldots,n_d)$, the generating function $G^{(\bfn)}$ for the descending basis of $\calp^{\bfn}(\kappa)$ can also be obtained by replacing $X$ with the one corresponding to $\bfn$ (see the beginning of this subsection) by Definition-Proposition~\ref{prop:descending}.
Summarizing, we have:
\begin{thm}\label{thm:generating}
	The function $\widetilde{G^{(n)}}$ given by \eqref{eq:generating} is a generating function for the descending basis of $\calp^{\bfn}(\kappa)$ for any partition $\bfn=(n_1,\ldots,n_d)$ of $n$, $\kappa\in\bbc\backslash\bbz_{<n}$ and an arbitrary function $G^{(1)}(s_1)\in\calk[[s_1]]$.
\end{thm}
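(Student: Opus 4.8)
The plan is to assemble the machinery built in this subsection and to check the two defining properties of the descending basis from Definition-Proposition~\ref{prop:descending} for the polynomials read off from $\widetilde{G^{(n)}}$. First I would observe that, since $P_\lambda = X^{-1} + \lambda\,{}^tT$ is $n \times n$, the expansion \eqref{eq:sigma} stops at $\lambda^n$, so $\sigma_i(X\,{}^tT) = 0$ for $i > n$ and the operator $F \mapsto \widetilde{F}$ depends only on the values of $F$ at $s_1,\ldots,s_n$. In particular $\widetilde{G} = \widetilde{G^{(n)}}$ for the element $G = \sum_{k \geq 0} G_k \in W$ constructed above, so $\widetilde{G^{(n)}}$ coincides with the generating function treated in the Proposition establishing that $W_k = W \cap V_k$ is one-dimensional: writing $\widetilde{G^{(n)}} = \sum_\nu P_\nu(T)\,X^\nu$, the polynomials satisfy $D_{i,i}P_\nu = 0$ and $D_{ij}P_\nu = c_{|\nu|}P_{\nu - e_{ij}}$ for $i \neq j$. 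The vanishing of the diagonal operators places every $P_\nu$ in $\calp^{(n)}(\kappa)$, and after rescaling by the constants $c_{|\nu|}$, which are nonzero since $\kappa \in \bbc\backslash\bbz_{<n}$, the family is normalized exactly as $D_{ij}P^D_\nu = P^D_{\nu - e_{ij}}$, $P^D_0 = 1$, hence is the descending basis of $\calp^{(n)}(\kappa)$.

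Second, I would re-derive the closed form \eqref{eq:generating} to make the statement explicit. On the subspace where $s_{n+1} = s_{n+2} = \cdots = 0$ the operator $\call_n$ reduces to $(\kappa + 1 - n)\partial_n + s_n\partial_n^2 - \calm_n$, so $\call_n(G^{(n)}) = 0$ yields, upon comparing coefficients of $s_n^r$, the recursion $(\kappa - n + r + 1)\,g_{r+1}^{(n)} = \calm_n(g_r^{(n)})$ with $g_0^{(n)} = G^{(n-1)}$. Solving it and using that $s_n$ commutes with $\calm_n$ gives $G^{(n)} = \mathbb{J}_{\kappa - n}(s_n\calm_n)(G^{(n-1)})$, and induction on $n$ produces \eqref{eq:generating}, with $G^{(1)}(s_1) \in \calk[[s_1]]$ arbitrary by the isomorphism $W\langle m\rangle \cong \calk[[s_1,\ldots,s_m]]$. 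The hypothesis $\kappa \notin \bbz_{<n}$ is exactly what keeps the ascending Pochhammer denominators $(\kappa - n + 1)^{(r)}$ nonzero, so that each $\mathbb{J}_{\kappa - j}(s_j\calm_j)$ is well-defined on the relevant formal power series.

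Third, for a general partition $\bfn = (n_1,\ldots,n_d)$ I would impose the block condition $X_{p,p} = 0$ fixed at the start of the subsection. Then $X^\nu = 0$ unless $\nu \in \caln_0^\bfn$, so the expansion of $\widetilde{G^{(n)}}$ retains precisely the terms $P_\nu(T)\,X^\nu$ with $\nu \in \caln_0^\bfn$. By the Definition-Proposition characterizing the descending basis of $\calp^\bfn_{\bfa,\bfb}(\kappa)$, which states that $P^D_{\nu,\kappa} \in \calp^\bfn_{\bfa,\bfb}(\kappa)$ exactly when $\nu \in \caln^\bfn_0(\bfa,\bfb)$, these surviving polynomials are precisely the descending basis of $\calp^\bfn(\kappa)$, which finishes the proof.

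The step I expect to be most delicate is the second one: justifying the coefficient-wise solution of the recursion inside the ring of formal power series and verifying that the inductively built $G^{(n)}$ genuinely lands in $W$, not merely in some $W\langle m\rangle$, at each stage. Tracking the hypothesis $\kappa \in \bbc\backslash\bbz_{<n}$ through the Pochhammer denominators is the crux that makes the inductive construction legitimate and the resulting $\{P_\nu\}$ a well-defined basis.
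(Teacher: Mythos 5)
Your proposal is correct and follows essentially the same route as the paper, whose ``proof'' of Theorem~\ref{thm:generating} is precisely the preceding build-up: the proposition that $W_k$ is one-dimensional with $\call_1(G_k)$ a nonzero multiple of $G_{k-1}$, Corollary~\ref{cor:DW} for the diagonal vanishing, the recursion solving $\call_n(G^{(n)})=0$ to get \eqref{eq:generating}, and the specialization $X_{p,p}=0$ combined with the Definition-Proposition that $P^D_{\nu,\kappa}\in\calp^\bfn_{\bfa,\bfb}(\kappa)$ iff $\nu\in\caln^\bfn_0(\bfa,\bfb)$ for general partitions. The only cosmetic difference is that you justify the nonvanishing of the constants $c_{|\nu|}$ via the Pochhammer denominators, while the paper uses $W\langle 0\rangle\cap V_k=\{0\}$; both are valid under the hypothesis $\kappa\in\bbc\backslash\bbz_{<n}$.
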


\subsection{Example of the generating function}

For $n=1$, $G^{(1)}$ can be taken as an arbitrary function as mentioned above.

For $n=2$, if we take $G^{(1)}=(1-s_1/2)^{3-2\kappa}$,
we have
\begin{align*}
	G^{(2)}(s_1,s_2) & =\mathbb{J}_{\kappa-2}(s_2\calm_2)(G^{(1)}(s_1))                                          \\
	                 & =\sum_{r=0}^\infty\frac{(s_2\calm_2)^r}{r!(\kappa-1)^{(r)}} (1-s_1/2)^{3-2\kappa}         \\
	                 & =\sum_{r=0}^\infty\binom{3/2-\kappa}{r} (-s_2)^r\left\{(1-s_1/2)^2\right\}^{3/2-\kappa-r} \\
	                 & =\frac{1}{\left\{(1-s_1/2)^2-s_2\right\}^{\kappa-3/2}}.
\end{align*}

For $n\geq3$, it is difficult to get the explicit form of the generating function by using \eqref{eq:generating}.
However, the expanded form can be obtained somewhat more easily using a recursion formula.
Let $G^{(3)}=\sum_{a,b,c}C_{a,b,c}\dfrac{s_1^a}{a!}\dfrac{s_2^b}{b!}\dfrac{s_3^c}{c!}$ be a function in $W\subset\calk[[s_1,s_2,s_3]]$.
We have
\begin{align*}
	\call_2 & =(\kappa-1)\partial_2+s_2\partial_2^2+2s_3\partial_2\partial_3-\partial_1^2, \\
	\call_3 & =(\kappa-2)\partial_3+s_3\partial_3^2-2\partial_1\partial_2-s_1\partial_2^2,
\end{align*} for $G^{(3)}$.
Therefore the conditions $\call_2(G^{(3)})=\call_3(G^{(3)})=0$ give the following recursions:
\begin{align*}
	(\kappa+b+2c-1)A_{a,b+1,c} & =A_{a+2,b,c},                   \\
	(\kappa+c-2)A_{a,b,c+1}    & =2A_{a+1,b+1,c}+aA_{a-1,b+2,c}.
\end{align*}
From these two equations, we have $(\kappa+c-2)A_{a,b,c+1}=(a+2\kappa+2b+4c)A_{a-1,b+2,c}$.
Then, we can find easily the solution of the recursions with the initial value $A_{a,0,0}=(\kappa-1)^{(a)}$.
\begin{align*}
	A_{a,b,c} & =\frac{(a+2b+3c+2\kappa-3)^{(c)}}{(\kappa-2)^{(c)}}(b+2c+\kappa-1)^{(a+b+c)}                                         \\
	          & =\frac{(a+2b+3c+2\kappa-3)^{(c)}}{(\kappa-2)^{(c)}}(2b+3c+\kappa-1)^{(a)}(b+3c+\kappa-1)^{(b)}(b+2c+\kappa-1)^{(c)}.
\end{align*}
Thus,
\begin{equation}\label{eq:G^3}
	G^{(3)}=\sum_{a,b,c\geq0}\binom{a+b+\delta}{a}\binom{b+\delta}{b}\binom{\delta}{c} \frac{(a+b+\delta-1)^{(c)}}{(\kappa-2)^{(c)}}s_1^as_2^bs_3^c,
\end{equation}
where $\delta=b+3c+\kappa-2$,
is a generating function.
The descending basis for $d = 3$ with a small degree is enumerated in Appendix A.

\section{Differential operators on hermitian modular forms}
\subsection{Construction of the differential operators}
From now on, $\widetilde{G^{(\bfn)}}$ will also be denoted as ${G^{(\bfn)}}$ if there is no confusion.

We will construct the differential operators on $\calp^{\bfn}(\kappa)$ from the generating function $G^{(\bfn)}$.
We have $G^{(\bfn)}({}^t\!AXB,T)=G^{(\bfn)}(X,AT{}^t\!B)$ for any $A, B\in\gn$,
since $\sigma_i(({}^t\!AXB){}^tT)=\sigma_i(X {}^t\!(AT{}^tB))$.
We define the representations $(\rho_U, \bbc[[X]])$ and $(\rho'_U, \bbc[[X]])$ of $\gn\times\gn$ by
\[\rho_U(A,B)F(X)=F({}^t\!AXB) \quad \text{and} \quad \rho'_U(A,B)F(X)=F(A^{-1}X{}^t\!B^{-1})\]
for $A, B\in\gn$ and $F\in\bbc[[X]]$.
When $\gn\times\gn$ acts on $\bbc[[X]]$ by $\rho_U'$,
write $\bbc[[X]]$ as $(\bbc[[X]])'$ to avoid confusion.
Then, $G^{(\bfn)}$ can be regarded as an element of $(\calp^\bfn(\kappa)\otimes (\bbc[[X]])')^{\gn\times\gn}$.
Thus, for an representation $(\rho,V)$ of ${\gn\times\gn}$, if we send $G^{(\bfn)}$
with an element of $\mathrm{Hom}_{\gn\times\gn}(\bbc[[X]],V)$,
we obtain the element of $\calp^\bfn_\rho(\kappa)=(\calp^\bfn(\kappa)\otimes V')^{\gn\times\gn}$.
As a matter of fact, it yields an isomorphism.

\begin{prop}\label{prop:calpisom}
	We have an isomorphism
	\[
		\begin{array}{rccc}
			 & \mathrm{Hom}_{\gn\times\gn}(\bbc[[X]],V) & \quad\cong\quad & \calp^\bfn_\rho(\kappa), \\[5pt]
			 & c                                        & \mapsto         & c(G^{(\bfn)}).
		\end{array}
	\]
\end{prop}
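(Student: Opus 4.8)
The plan is to identify both sides with a single space of ``equivariant coefficient families'' taken with respect to the descending basis, in such a way that $c\mapsto c(G^{(\bfn)})$ realizes the identification. Write $\{P_\nu\}_{\nu\in\caln_0^\bfn}$ for the coefficients of $G^{(\bfn)}=\sum_\nu P_\nu(T)X^\nu$; by Theorem~\ref{thm:generating} these are nonzero scalar multiples of the descending-basis elements, and in particular form a basis of $\calp^\bfn(\kappa)$. Since $\rho_U(A,B)F(X)=F({}^t\!AXB)$ preserves the total degree in $X$, we have $\bbc[[X]]=\prod_d\bbc[X]_d$ with each $\bbc[X]_d$ a finite-dimensional $\gn\times\gn$-stable space spanned by $\{X^\nu:|\nu|=d\}$. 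I interpret $\mathrm{Hom}_{\gn\times\gn}(\bbc[[X]],V)$ as continuous equivariant maps for the $X$-adic topology; since $V$ is finite-dimensional, any such $c$ has $\ker c\supset\mathfrak{m}^N$ for some $N$, so $c$ is supported on finitely many monomials and is determined by the values $c(X^\nu)$.

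First I would settle well-definedness using the structures already in place. Both $\rho'_U$ and $\rho'$ are obtained from $\rho_U$ and $\rho$ by precomposing the argument with $g\mapsto{}^t\!g^{-1}$, so the same underlying map $c$ that intertwines $\rho_U$ with $\rho$ also intertwines $\rho'_U$ with $\rho'$; thus $c$ gives a $\gn\times\gn$-map $(\bbc[[X]])'\to V'$. Tensoring with the identity on $\calp^\bfn(\kappa)$ produces an equivariant map $\calp^\bfn(\kappa)\otimes(\bbc[[X]])'\to\calp^\bfn(\kappa)\otimes V'$, which carries invariants to invariants. As $G^{(\bfn)}\in(\calp^\bfn(\kappa)\otimes(\bbc[[X]])')^{\gn\times\gn}$, its image $c(G^{(\bfn)})=\sum_\nu P_\nu\,c(X^\nu)$ lies in $(\calp^\bfn(\kappa)\otimes V')^{\gn\times\gn}=\calp^\bfn_\rho(\kappa)$; it is a genuine polynomial because $c(X^\nu)=0$ for $|\nu|\geq N$.

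For bijectivity I would pass through coefficient families. Expanding the relation $G^{(\bfn)}({}^t\!g_1Xg_2,T)=G^{(\bfn)}(X,g_1T{}^t\!g_2)$ in the monomial basis, and writing $({}^t\!g_1Xg_2)^\nu=\sum_\mu a_{\mu\nu}(g)X^\mu$, one obtains the companion identity $P_\mu(g_1T{}^t\!g_2)=\sum_\nu a_{\mu\nu}(g)P_\nu(T)$ with the same coefficients $a_{\mu\nu}(g)$, which vanish unless $|\mu|=|\nu|$. Given $c$, set $v_\nu:=c(X^\nu)$; then equivariance of $c$ is equivalent to the relations $\sum_\mu a_{\mu\nu}(g)v_\mu=\rho(g)v_\nu$ for all $\nu$ and $g$. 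Conversely, given $P_V\in\calp^\bfn_\rho(\kappa)$, expand it in the basis $\{P_\nu\}$ as the finite sum $P_V=\sum_\nu P_\nu v_\nu$; its transformation law $P_V(g_1T{}^t\!g_2)=\rho(g)P_V(T)$, combined with the companion identity and the linear independence of the $P_\nu$, yields exactly the same relations on $(v_\nu)$. Hence both $\mathrm{Hom}_{\gn\times\gn}(\bbc[[X]],V)$ and $\calp^\bfn_\rho(\kappa)$ are identified with the set of families $(v_\nu)$ satisfying $\sum_\mu a_{\mu\nu}(g)v_\mu=\rho(g)v_\nu$, and under both identifications $c(X^\nu)=v_\nu$ corresponds to the $\nu$-th coordinate of $P_V=\sum_\nu P_\nu v_\nu=c(G^{(\bfn)})$; this is the asserted bijection. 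Injectivity is then immediate: if $c(G^{(\bfn)})=\sum_\nu P_\nu v_\nu=0$, grouping by degree and using linear independence of the $P_\nu$ forces all $v_\nu=c(X^\nu)=0$, whence $c=0$.

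The only real obstacle is the infinite-dimensionality of $\bbc[[X]]$: I must ensure that an equivariant $c$ is determined by, and supported on, finitely much monomial data, so that $c(X^\nu)=0$ for all $\nu$ forces $c=0$ and so that $c(G^{(\bfn)})$ is a polynomial rather than a formal series. This is exactly what the continuity hypothesis provides, via the degree grading of $\bbc[[X]]$ preserved by $\rho_U$ together with the finite-dimensionality of $V$. Once this point is granted, the coefficient-family description above is literal on both sides and the two halves of the argument match verbatim, completing the proof.
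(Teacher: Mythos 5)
Your argument is correct, and it is essentially the proof that the paper delegates to the citation of \cite[Theorem~3.1(iii)]{Ibukiyama2020generic}: the whole content is that the coefficients $P_\nu$ of $G^{(\bfn)}$ are nonzero multiples of the descending basis of $\calp^{\bfn}(\kappa)$, so that an equivariant $c$ and an element of $\calp^\bfn_\rho(\kappa)$ are encoded by the same finitely supported family $(v_\nu)$ subject to the same intertwining relations, the two sets of relations being matched by the companion identity $P_\mu(g_1T\,{}^t\!g_2)=\sum_\nu a_{\mu\nu}(g)P_\nu(T)$ coming from $G^{(\bfn)}({}^t\!g_1Xg_2,T)=G^{(\bfn)}(X,g_1T\,{}^t\!g_2)$. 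Two points you make explicit that the paper leaves implicit are worth retaining: the Hom space must be read as continuous (degreewise finitely supported) maps for $c(G^{(\bfn)})$ to be a polynomial and for $c$ to be determined by its values on monomials in the infinite product $\bbc[[X]]=\prod_d\bbc[X]_d$; and the hypothesis $\kappa\in\bbc\backslash\bbz_{<n}$ inherited from Theorem~\ref{thm:generating} is needed so that the $P_\nu$ really form a basis, even though the proposition as stated omits it.
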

\begin{proof}
	It can be proved as well as \cite[Theorem~3.1 (iii)]{Ibukiyama2020generic}.
\end{proof}

\begin{rem}
	As mentions in \cite[Remark 3.3]{Ibukiyama2020generic},
	the element of $\mathrm{Hom}_{\gn\times\gn}(\bbc[[X]],V)$ can be written down explicitly
	using the Capelli identity.
\end{rem}

Let $\bbc[X_{m,n}]$ be the space of polynomials
in the entries of $(m,n)$-matrices $X_{m,n}=(x_{ij})$ over $\bbc$.
We recall the irreducible decomposition of the polynomial ring $\bbc[X_{m,n}]$.
$\mathrm{GL}_m(\bbc)\times\mathrm{GL}_n(\bbc)$ acts on $\bbc[X_{m,n}]$ by
$(A,B)\cdot f(X_{m,n})=f({}^t\!AX_{m,n}B)$.
Then, we have the irreducible decomposition
\[\bbc[X_{m,n}]=\sum_{\ell(\bfk)\leq\min\left\{m,n\right\}}\rho_{m,\bfk}\boxtimes\rho_{n,\bfk}\]
(See, for example, \cite[Theorem 5.6.7]{Goodman2009symmetry}).
If we put \[\xi_i=\det\begin{pmatrix}
		x_{n-i+1,n-i+1} & \dots  & x_{1,n-i+1} \\
		\vdots          & \ddots & \vdots      \\
		x_{n-i+1,1}     & \dots  & x_{n,n}
	\end{pmatrix} \]
for $1\leq i\leq \min\left\{m,n\right\}$, then the highest weight vector
which generates the subspace of $\bbc[X_{m,n}]$
on which $\mathrm{GL}_m(\bbc)\times\mathrm{GL}_n(\bbc)$ acts by $\rho_{m,\bfk}\boxtimes\rho_{n,\bfk}$
with a dominant integral weight $\bfk=(k_1,\ldots,k_d,0,\ldots)$ is
\begin{equation}\label{eq:highestweight}
	\xi_{\bfk}:=\xi_1^{k_1-k_2}\xi_2^{k_2-k_3}\cdots\xi_{d-1}^{a_{d-1}-a_d}\xi_d^{a_d}.
\end{equation}
\begin{rem}
	The method of constructing a representation of $\mathrm{GL}_n(\bbc)$ with depth $m$ by taking a subrepresentation of the right regular representation on $\bbc[X_{m,n}]$ using the above irreducible decomposition is often called
	``a realization of a representation of $\mathrm{GL}_n(\bbc)$ by bideterminants''.
	We write the space of such representation $\rho$ as $\bbc[X_{m,n}]_\rho$
	and put $R[X_{m,n}]_\rho=\bbc[X_{m,n}]_\rho\cap R[X_{m,n}]$ for a commutative ring $R$.
\end{rem}

\subsection{The case of $d=2$}
We will now discuss the case $d=2$, which can be calculated concretely.
We put $\bfn=(n_1,n_2)$ with $n=n_1+n_2$ and $n_1\geq n_2$.
We denote
$X=\begin{pmatrix}	0 &X_{1,2}\\X_{2,1}& 0 \end{pmatrix}$, where $X_{1,2}$ (resp. $X_{2,1}$)
is $(n_1, n_2)$-matrix (resp. $(n_2,n_1)$-matrix) of variables.
By the above, the representation $(\gn\times\gn, \bbc[X])$ has the irreducible decomposition
\[\bbc[X]=\sum_{\ell(\bfk),\ell(\bfl)\leq n_2}\rho_{n_1,(\bfk,\bfl)}\boxtimes\rho_{n_2,(\bfl,\bfk)}.\]
Combining this with Proposition~\ref{prop:calpisom}, we obtain the following proposition.
(For another more direct proof, see \cite[Proposition~3.22]{Takeda2025pullback}.)

\begin{prop}
	Let $(\bfk_i, \bfl_i)$ be a pair of dominant integral weights with $\ell(\bfk_i), \ell(\bfl_i)\leq n_2$ for $i=1,2$.
	We assume that $\ell(\bfk_i)+\ell(\bfl_i)\leq\kappa$ for $i=1,2$.
	Then, the differential operator $\mathbb{D}$ on $\calp^{\bfn}(\kappa)$ satisfies the condition (A) for $\det^\kappa$ and $\det^\kappa\rho_{n_1,(\bfk_{1},\bfl_1)}\boxtimes\det^\kappa\rho_{n_2,(\bfk_{2},\bfl_2)}$
	When $d=2$, There exist the differential operator $\mathbb{D}$ satisfying the condition (A)
	for $\det^\kappa$ and $\det^\kappa\rho_{n_1,(\bfk_{1},\bfl_1)}\boxtimes\det^\kappa\rho_{n_2,(\bfk_{2},\bfl_2)}$
	if and only if $\bfk_1=\bfl_2$, $\bfl_1=\bfk_2$.
	And if it exists, it is unique up to scalar multiplications.
\end{prop}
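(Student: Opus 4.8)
The plan is to reduce the existence and uniqueness of $\mathbb{D}$ to a multiplicity computation for the action of $\gn\times\gn$ on $\bbc[[X]]$, and then read off the answer from the irreducible decomposition recorded just above. First I would note that, by Proposition~\ref{prop:diff} together with the remark that the two defining conditions of $\calp^\bfn_\rho(\kappa)$ coincide with conditions (1) and (2) there, the space of differential operators $\mathbb{D}$ satisfying Condition (A) for $\det^\kappa$ and $\det^\kappa\rho_{n_1,(\bfk_1,\bfl_1)}\boxtimes\det^\kappa\rho_{n_2,(\bfk_2,\bfl_2)}$ is canonically identified with $\calp^\bfn_\rho(\kappa)$, where $\rho=\rho_{n_1,(\bfk_1,\bfl_1)}\boxtimes\rho_{n_2,(\bfk_2,\bfl_2)}$ and $V=V_{n_1,(\bfk_1,\bfl_1)}\otimes V_{n_2,(\bfk_2,\bfl_2)}$; the hypothesis $\ell(\bfk_i)+\ell(\bfl_i)\leq\kappa$ is exactly what permits invoking that proposition, and the $\det^\kappa$ twists, being common to source and target, do not affect the equivariance recorded in condition (2). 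Applying Proposition~\ref{prop:calpisom} then yields the further isomorphism $\calp^\bfn_\rho(\kappa)\cong\mathrm{Hom}_{\gn\times\gn}(\bbc[[X]],V)$, so that $\mathbb{D}$ exists if and only if this Hom space is nonzero, and is unique up to scalar if and only if it is one-dimensional.

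Next I would evaluate this Hom space using the decomposition
\[
	\bbc[X]=\sum_{\ell(\bfk),\ell(\bfl)\leq n_2}\rho_{n_1,(\bfk,\bfl)}\boxtimes\rho_{n_2,(\bfl,\bfk)}
\]
displayed above, in which each irreducible constituent occurs with multiplicity one. Since the $\gn\times\gn$-action preserves the polynomial degree and $V$ is irreducible, any homomorphism into the finite-dimensional $V$ is controlled by the single graded piece carrying a copy of $V$, so $\dim\mathrm{Hom}_{\gn\times\gn}(\bbc[[X]],V)$ equals the multiplicity of $V$ in the decomposition above, which is at most one. Hence uniqueness up to scalar is automatic, and everything reduces to deciding when $V$ is isomorphic to one of the summands $\rho_{n_1,(\bfk,\bfl)}\boxtimes\rho_{n_2,(\bfl,\bfk)}$.

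It then remains to match $V=\rho_{n_1,(\bfk_1,\bfl_1)}\boxtimes\rho_{n_2,(\bfk_2,\bfl_2)}$ against $\rho_{n_1,(\bfk,\bfl)}\boxtimes\rho_{n_2,(\bfl,\bfk)}$ by comparing the actions of the four general linear factors of $\gn\times\gn$ one at a time, namely the two copies of $\mathrm{GL}_{n_1}(\bbc)$ and the two copies of $\mathrm{GL}_{n_2}(\bbc)$. On the degree-$n_1$ block this forces $\bfk_1=\bfk$ and $\bfl_1=\bfl$, while on the degree-$n_2$ block it forces $\bfk_2=\bfl$ and $\bfl_2=\bfk$. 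Eliminating $\bfk,\bfl$ gives precisely $\bfk_1=\bfl_2$ and $\bfl_1=\bfk_2$; conversely, when these relations hold the choice $\bfk=\bfk_1$, $\bfl=\bfl_1$ realizes $V$ as a summand, and the hypothesis $\ell(\bfk_i),\ell(\bfl_i)\leq n_2$ guarantees that this summand actually occurs in the sum. Combined with the multiplicity-one statement, this establishes both directions of the equivalence and the uniqueness up to scalar.

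I expect the main subtlety to lie in this final matching step. The ``crossing'' of weights in the summand $\rho_{n_1,(\bfk,\bfl)}\boxtimes\rho_{n_2,(\bfl,\bfk)}$, where the degree-$n_2$ weight is $(\bfl,\bfk)$ rather than $(\bfk,\bfl)$, is exactly what produces the swapped condition $\bfk_1=\bfl_2$, $\bfl_1=\bfk_2$ instead of the naive $\bfk_1=\bfk_2$, $\bfl_1=\bfl_2$; so one must keep careful track of which copy of $\mathrm{GL}_{n_i}(\bbc)$ inside $\gn\times\gn$ acts through each variable block $X_{1,2}$ and $X_{2,1}$, in order not to interchange the roles of $\bfk$ and $\bfl$.
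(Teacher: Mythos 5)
Your proposal is correct and follows essentially the same route as the paper, which obtains the statement by combining the multiplicity-free decomposition $\bbc[X]=\sum_{\ell(\bfk),\ell(\bfl)\leq n_2}\rho_{n_1,(\bfk,\bfl)}\boxtimes\rho_{n_2,(\bfl,\bfk)}$ with the isomorphism $\mathrm{Hom}_{\gn\times\gn}(\bbc[[X]],V)\cong\calp^\bfn_\rho(\kappa)$ of Proposition~\ref{prop:calpisom} and the characterization in Proposition~\ref{prop:diff}. Your careful bookkeeping of which $\mathrm{GL}_{n_i}(\bbc)$ factor acts on each block $X_{1,2}$, $X_{2,1}$, yielding the swapped condition $\bfk_1=\bfl_2$, $\bfl_1=\bfk_2$, is exactly the point the paper leaves implicit.
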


Consider the simplest case $n_1=n-1, n_2=1$.
If we put $X_{1,2}={}^t\!(\begin{matrix}x_{1,n}&x_{2,n}&\cdots & x_{n-1,n}\end{matrix})$
and $X_{2,1}=(\begin{matrix}x_{n,1}&x_{n,2}&\cdots&x_{n,n-1}\end{matrix})$,
we have
\[X{}^t\!T=\begin{pmatrix}
		x_{1,n}t_{1,n}                           & \cdots            & x_{1,n}t_{n,n}                           \\
		\vdots                                   & \ddots            & \vdots                                   \\
		x_{n-1,n}t_{1,n}                         & \cdots            & x_{n-1,n}t_{n,n}                         \\
		x_{n,1}t_{1,1}+\cdots+x_{n,n-1}t_{1,n-1} & \quad \cdots\quad & x_{n,1}t_{n,1}+\cdots+x_{n,n-1}t_{n,n-1}
	\end{pmatrix}\]
and this is conjugate to
\[\begin{pmatrix}
		0      &        &   & \cdots                                                               & 0                                           \\
		\vdots &        &   & \ddots                                                               & \vdots                                      \\
		0      &        &   & \cdots                                                               & 0                                           \\
		*      & \cdots & * & \displaystyle\sum_{i=1}^{n-1}x_{i,n}t_{i,n}                          & x_{n-1,n}t_{n,n}                            \\
		*      & \cdots & * & \displaystyle\quad\frac{1}{x_{n-1,n}}\sum_{i=1}^{n-1}x_{i,n}a_i\quad & \displaystyle\sum_{i=1}^{n-1}x_{n,i}t_{n,i}
	\end{pmatrix},\]
where $a_i=\sum_{j=1}^{n-1}x_{n,j}t_{ij}$.
Thus we have
\begin{align*}
	\sigma_1 & =\sum_{i=1}^{n-1}(x_{i,n}t_{i,n}+x_{n,i}t_{n,i}),                                                                  \\
	\sigma_2 & =\sum_{1\leq i,\: j\leq n-1} x_{i,n}x_{n,j}t_{i,n}t_{n,j}-\sum_{1\leq i,\: j\leq n-1} x_{i,n}x_{n,j}t_{ij}t_{n,n},
\end{align*}
and $\sigma_i=0$ for $\geq3$.
Then the generating function $G^{(\bfn)}$ for this $\bfn$ is given by
\begin{align*}
	G^{(\bfn)} & =\left\{(1-\sigma_1/2)^2-\sigma_2\right\}^{-(\kappa-3/2)}                                                                           \\
	           & =\left\{1-\sum_{i=1}^{n-1}(x_{i,n}t_{i,n}+x_{n,i}t_{n,i})+\frac{1}{4}\left(\sum_{i=1}^{n-1}(x_{i,n}t_{i,n}-x_{n,i}t_{n,i})\right)^2
	+\sum_{1\leq i,\: j\leq n-1} x_{i,n}x_{n,j}t_{ij}t_{n,n}\right\}^{-(\kappa-3/2)}
\end{align*}
Expanding this as a formal power series of $x_{i,n}$ and $x_{n,i}$ for all $i$,
the degree $k$ for $x_{i,n}$ and degree $l$ for $x_{n,i}$ term $P_{k,l}(T)$ gives a
differential operator $\mathbb{D}_{k,l}=P_{k,l}(T)$ satisfies the condition (A)
for $\det^\kappa$ and $\det^\kappa\rho_{n_1,(\bfk,\bfl)}\boxtimes\det^\kappa\rho_{n_2,(\bfl,\bfk)}$ with $\bfk=(k,0,0,\ldots)$ and $\bfl=(l,0,0,\ldots)$,
which is equal to $(\det^{\kappa+k}\boxtimes \det^l)\boxtimes(\det^\kappa Sym^l\boxtimes Sym^k)$.

For the general $\bfn=(n_1,n_2)$, we give the generating function of the differential operators $\{\mathbb{D}_{k,l}\}$, where $\mathbb{D}_{k,l}$ satisfies
the condition (A) for $\det^\kappa$ and $(\det^\kappa Sym^k\boxtimes Sym^l)\boxtimes(\det^\kappa Sym^l\boxtimes Sym^k)$.

\begin{thm}\label{thm:diffsym}
	We put
	\begin{align*}
		G^{(\bfn)}
		 & =\left\{1-\sum_{\substack{1\leq i \leq n_1                                                                                                                                      \\1\leq j \leq n_2}}(t_{i,n_1+j}u_{1,i}v_{2,j}+t_{n_1+j,i}v_{1,j}u_{2,i})\right.
		+\frac{1}{4}\left(\sum_{\substack{1\leq i \leq n_1                                                                                                                                 \\1\leq j \leq n_2}}(t_{i,n_1+j}u_{1,i}v_{2,j}-t_{n_1+j,i}v_{1,j}u_{2,i})\right)^2\\[6pt]
		 & \hspace{10mm} +\left.\left(\sum_{1\leq i,\: j\leq n_1}t_{ij}u_{1,i}u_{2,j}\right)\left(\sum_{1\leq i,\: j\leq n_2}t_{n_1+i,n_1+j}v_{1,i}v_{2,j}\right)\right\}^{-(\kappa-3/2)}.
	\end{align*}
	Expanding this generating functions as a series in $u_1, u_2, v_1,v_2$,
	and replacing each $t_{ij}$ by $\frac{\partial}{\partial z_{ij}}$, the homogeneous part of degree $k$ for both
	$u_1$ and $v_2$ and degree $l$ for both $v_1$ and $u_2$ gives a differential operator $\mathbb{D}_{k,l}$
	satisfying the condition (A) for $\det^\kappa$ and $(\det^\kappa Sym^k\boxtimes Sym^l)\boxtimes(\det^\kappa Sym^l\boxtimes Sym^k)$.
	Here we are taking the representation space of $Sym^k\boxtimes Sym^l$ (resp. $Sym^l\boxtimes Sym^k$) as the space of polynomials
	in $u_1 = (u_{1,1}, \ldots , u_{1,n_1} )$ and $v_1 = (v_{1,1}, \ldots , v_{1,n_2} )$
	(resp. $u_2 = (u_{2,1}, \ldots , u_{2,n_1} )$ and  $v_2 = (v_{2,1}, \ldots , v_{2,n_2} )$)
	which are homogeneous of degree $k$ for
	$u_1$ (resp. $v_2$) and of degree $l$ for $v_1$ (resp. $u_2$).
\end{thm}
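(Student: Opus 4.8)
The plan is to recognize the displayed $G^{(\bfn)}$ as the rank-one specialization of the descending-basis generating function of Theorem~\ref{thm:generating}, and then to read off Condition (A) from Proposition~\ref{prop:diff}. First I would substitute the rank-one blocks $X_{1,2}=u_1\,{}^t\!v_2$ and $X_{2,1}=v_1\,{}^t\!u_2$ into the off-diagonal form $X=\begin{pmatrix}0&X_{1,2}\\X_{2,1}&0\end{pmatrix}$ and check that the theorem's $G^{(\bfn)}$ is exactly $\widetilde{G^{(n)}}(X,T)$ for the seed $G^{(1)}(s_1)=(1-s_1/2)^{3-2\kappa}$. The key observation is the factorization
\[
	X=\begin{pmatrix}u_1&0\\0&v_1\end{pmatrix}\begin{pmatrix}0&{}^t\!v_2\\{}^t\!u_2&0\end{pmatrix},
\]
so that $X\,{}^t\!T$ factors through an $n\times 2$ and a $2\times n$ matrix and hence has rank at most $2$. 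Consequently $\sigma_i(X\,{}^t\!T)=0$ for $i\geq3$, and since $G^{(n)}(s_1,s_2,0,\ldots,0)=G^{(2)}(s_1,s_2)$, the series collapses to $G^{(2)}(\sigma_1,\sigma_2)=\{(1-\sigma_1/2)^2-\sigma_2\}^{-(\kappa-3/2)}$ as computed in the $n=2$ example. Writing $\sigma_1=\tr(X\,{}^t\!T)$ and $\sigma_2=\det(RU)$ with $R,U$ the two rectangular factors, a short computation of the $2\times2$ matrix $RU$ produces its entries as the four bilinear forms $\alpha,\delta$ (the two summands of $\sigma_1$) together with $\beta=\sum_{i,j\leq n_1}t_{ij}u_{1,i}u_{2,j}$ and $\gamma=\sum_{i,j\leq n_2}t_{n_1+i,n_1+j}v_{1,i}v_{2,j}$, and the identity
\[
	\left(1-\tfrac12\sigma_1\right)^2-\sigma_2=1-(\alpha+\delta)+\tfrac14(\alpha-\delta)^2+\beta\gamma
\]
matches the bracket in the statement term by term.

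Next I would deduce pluriharmonicity, i.e.\ condition (1) of Proposition~\ref{prop:diff}. By Theorem~\ref{thm:generating} the $T$-coefficients of $\widetilde{G^{(n)}}(X,T)$ are, up to nonzero scalars, the descending basis of $\calp^\bfn(\kappa)$, hence lie in $\calp^\bfn(\kappa)$. The rank-one substitution followed by extraction of the homogeneous component of multidegree $(k,k,l,l)$ in $(u_1,v_2,v_1,u_2)$ only takes $\bbc$-linear combinations of these coefficients, so every component of $P_{k,l}(T;u,v)$ again lies in $\calp^\bfn(\kappa)$. By the identity $\Delta_{ij}\widetilde P=(D_{ij}P)^\sim$ from the opening of Section~4, membership in $\calp^\bfn(\kappa)$ is precisely condition (1).

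For the covariance, condition (2), I would use the built-in equivariance $G^{(\bfn)}({}^t\!AXB,T)=G^{(\bfn)}(X,AT\,{}^t\!B)$ for $A=\mathrm{diag}(A_1,A_2)$, $B=\mathrm{diag}(B_1,B_2)\in\gn$. Substituting the rank-one form shows that $X\mapsto{}^t\!AXB$ induces $u_1\mapsto{}^t\!A_1u_1$, $v_2\mapsto{}^t\!B_2v_2$, $v_1\mapsto{}^t\!A_2v_1$, $u_2\mapsto{}^t\!B_1u_2$; since these substitutions preserve each multidegree, extracting the $(k,k,l,l)$-part turns the equivariance into exactly the transformation law of condition (2). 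Reading off degrees, the block-wise representation is $\rho_{n_1,(k),(l)}=Sym^k\boxtimes Sym^l$ realized on $u_1$ (degree $k$) and $u_2$ (degree $l$), and $\rho_{n_2,(l),(k)}=Sym^l\boxtimes Sym^k$ realized on $v_1$ (degree $l$) and $v_2$ (degree $k$); regrouping the two $\mathrm{GL}$-copies by the $\gn\times\gn$ structure identifies this with $(Sym^k\boxtimes Sym^l)\boxtimes(Sym^l\boxtimes Sym^k)$, the $\det^\kappa$-twist being supplied by the statement of Proposition~\ref{prop:diff}. Equivalently, this step is the assertion that substitution-and-projection is a $\gn\times\gn$-homomorphism $c\in\mathrm{Hom}_{\gn\times\gn}(\bbc[[X]],V)$, so that $P_{k,l}=c(G^{(\bfn)})\in\calp^\bfn_\rho(\kappa)$ by Proposition~\ref{prop:calpisom}. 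Checking the admissibility bounds of Proposition~\ref{prop:diff} for the single-row weights $(k),(l)$ (namely $\ell\leq n_2$, which holds since $n_2\geq1$, and $\ell(k)+\ell(l)\leq\kappa$, which holds for $\kappa\geq2$), I conclude that $\mathbb{D}_{k,l}=P_{k,l}(\partial_Z)$ satisfies Condition (A).

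I expect the main obstacle to be the first step: correctly matching the closed form of $G^{(\bfn)}$ with the specialized generating function, in particular the cofactor/minor bookkeeping that produces $\sigma_1$ and $\sigma_2$ from the $2\times2$ matrix $RU$, and the sign in the cross term $\tfrac14(\alpha-\delta)^2$ arising from $\sigma_1^2/4-\sigma_2$. The representation-theoretic identification in the last step is routine once the linear action on $u_1,v_1,u_2,v_2$ is recorded, though some care is needed to match the paper's $\gn\times\gn$ (i.e.\ $A$ versus $B$) grouping in the realization against the block-wise grouping used in Proposition~\ref{prop:diff}.
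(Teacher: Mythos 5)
Your proposal is correct, but it proves the pluriharmonicity by a genuinely different route than the paper. The paper's proof of Theorem~\ref{thm:diffsym} is a direct verification: it sets $f=1-\tau_{12}-\tau_{21}+\frac14(\tau_{12}-\tau_{21})^2+\tau_{11}\tau_{22}$, computes the first derivatives $\partial_{ij}f$ explicitly, and then expands $D_{ij}f^{-(\kappa-3/2)}$ term by term until everything cancels against $f$ itself; the equivariance is dismissed as obvious, exactly as you do. You instead recognize the displayed bracket as $(1-\sigma_1/2)^2-\sigma_2$ under the rank-one substitution $X_{1,2}=u_1{}^t v_2$, $X_{2,1}=v_1{}^t u_2$ (your factorization $X=RU$ and the identification $U{}^tTR=\left(\begin{smallmatrix}\tau_{12}&\tau_{22}\\ \tau_{11}&\tau_{21}\end{smallmatrix}\right)$, hence $\sigma_1=\tau_{12}+\tau_{21}$, $\sigma_2=\tau_{12}\tau_{21}-\tau_{11}\tau_{22}$ and $\sigma_i=0$ for $i\geq3$, all check out), and then import pluriharmonicity from Theorem~\ref{thm:generating}, since linear substitution and multidegree projection only form linear combinations of the descending-basis coefficients $P_\nu\in\calp^{\bfn}(\kappa)$. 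This is essentially the generalization to arbitrary $(n_1,n_2)$ of the computation the paper itself carries out for $n_1=n-1$, $n_2=1$ just before the theorem. What each approach buys: yours explains where the closed form comes from and replaces the long cancellation with the already-established generating-function machinery; the paper's is self-contained, independent of Theorem~\ref{thm:generating}, and valid as a formal identity in $\kappa$ without restriction. The one point you should make explicit is that Theorem~\ref{thm:generating} is stated for $\kappa\in\bbc\backslash\bbz_{<n}$ (the coefficients live in $\calk[T]=\bbc(\kappa)[T]$), so to get the identity $D_{ij}P_{k,l}=0$ for all $\kappa$ you need a one-line density argument (the coefficients of $f^{-(\kappa-3/2)}$ are polynomial in $\kappa$); this is harmless but worth recording. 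Your remark about regrouping the four $\mathrm{GL}$-factors between the block-wise pairing $(A_i,B_i)$ of Proposition~\ref{prop:diff} and the $u_1,v_1$ versus $u_2,v_2$ grouping in the theorem's statement is exactly the right care to take, and your degree bookkeeping ($u_1,u_2$ of degrees $k,l$ carrying $\rho_{n_1,(k),(l)}$; $v_1,v_2$ of degrees $l,k$ carrying $\rho_{n_2,(l),(k)}$) matches the asserted weight.
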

\begin{proof}
	We check the conditions of the Proposition~\ref{prop:diff}.
	The second condition is almost obvious from the definition of $G^{(\bfn)}$, so we only check the pluriharmonicity.
	Due to symmetry, we only show that $D_{ij}G^{(\bfn)}=0$ for $1\leq i,j\leq n_1$.
	To simplify the notations, we put
	\begin{align*}
		\tau_{11} & =\sum_{1\leq i,\: j\leq n_1}t_{ij}u_{1,i}u_{2,j}, & \tau_{22} & =\sum_{1\leq i,\: j\leq n_2}t_{n_1+i,n_1+j}v_{1,i}v_{2,j}, \\
		\tau_{12} & =\sum_{\substack{1\leq i \leq n_1                                                                                          \\1\leq j \leq n_2}}t_{i,n_1+j}u_{1,i}v_{2,j}, &\tau_{21} &=\sum_{\substack{1\leq i \leq n_1\\1\leq j \leq n_2}}t_{n_1+j,i}v_{1,j}u_{2,i},
	\end{align*}
	and $f= 1-\tau_{12}-\tau_{21}+\frac{1}{4}(\tau_{12}-\tau_{21})^2+\tau_{11}\tau_{22}$
	(Note that $G^{(\bfn)}=f^{-(\kappa-3/2)}$).
	By a direct calculation, we have
	\begin{align*}
		\partial_{ij}f      & =u_{1,i}u_{2,j}\tau_{22}                                         & (1\leq i,j \leq n_1),                 \\
		\partial_{i,n_1+j}f & = u_{1,i}v_{2,j}\left(-1+\frac{1}{2}(\tau_{12}-\tau_{21})\right) & (1\leq i \leq n_1, 1\leq j \leq n_2), \\
		\partial_{n_1+i,j}f & = v_{1,i}u_{2,j}\left(-1-\frac{1}{2}(\tau_{12}-\tau_{21})\right) & (1\leq i \leq n_2, 1\leq j \leq n_1).
	\end{align*}
	For $1\leq i,j\leq n_1$, we have
	\begin{align*}
		D_{ij}G^{(\bfn)} & =\left(\kappa\partial_{ij}+\sum_{k,l=1}^nt_{k,l}\partial_{i,l}\partial_{k,j}\right)f^{-(\kappa-3/2)}                                                            \\
		                 & =\kappa\left(\frac{3}{2}-\kappa\right)f^{-(\kappa-1/2)}u_{1,i}u_{2,j}\tau_{22}                                                                                  \\
		                 & \hspace{5mm}+\sum_{1\leq i,\: j\leq n_1}t_{k,l}\left(\frac{3}{2}-\kappa\right)\left(\frac{1}{2}-\kappa\right)f^{-(\kappa+1/2)}
		u_{1,i}u_{2,l}u_{1,k}u_{2,j}\tau_{22}^2                                                                                                                                            \\
		                 & \hspace{5mm}+\sum_{\substack{1\leq k \leq n_1                                                                                                                   \\1\leq l \leq n_2}}t_{k,n_1+l}\left(\frac{3}{2}-\kappa\right)\left(\frac{1}{2}-\kappa\right)f^{-(\kappa+1/2)}u_{1,k}u_{2,j}\tau_{22}
		\cdot\frac{1}{2}u_{1,i}v_{2,l}(\tau_{12}-\tau_{21}-2)                                                                                                                              \\
		                 & \hspace{5mm}-\sum_{\substack{1\leq k \leq n_2                                                                                                                   \\1\leq l \leq n_1}}t_{n_1+k,l}\left(\frac{3}{2}-\kappa\right)\left(\frac{1}{2}-\kappa\right)f^{-(\kappa+1/2)}u_{1,i}u_{2,l}\tau_{22}
		\cdot\frac{1}{2}v_{1,k}u_{2,j}(\tau_{12}-\tau_{21}+2)                                                                                                                              \\
		                 & \hspace{5mm}-\sum_{1\leq i,\: j\leq n_2}t_{n_1+k,n_1+l}\left(\frac{3}{2}-\kappa\right)\left(\frac{1}{2}-\kappa\right)f^{-(\kappa+1/2)}                          \\
		                 & \hspace{7mm}\cdot\frac{1}{4}u_{1,i}v_{2,l}v_{1,k}u_{2,j}(\tau_{12}-\tau_{21}-2)(\tau_{12}-\tau_{21}+2)                                                          \\
		                 & \hspace{5mm}-\sum_{1\leq i,\: j\leq n_2}t_{n_1+k,n_1+l}\left(\frac{3}{2}-\kappa\right)f^{-(\kappa-1/2)}
		\cdot\frac{1}{2}u_{1,i}v_{2,l}v_{1,k}u_{2,j}                                                                                                                                       \\
		                 & =\left(\frac{3}{2}-\kappa\right)\left(\frac{1}{2}-\kappa\right)f^{-(\kappa+1/2)}u_{1,i}u_{2,j}\tau_{22}                                                         \\
		                 & \hspace{7mm}\cdot\left\{\tau_{11}\tau_{22}+\frac{1}{2}(\tau_{12}-\tau_{21})^2-\tau_{12}-\tau_{21}-\frac{1}{4}\left\{(\tau_{12}-\tau_{21})^2-4\right\}-f\right\} \\
		                 & =\left(\frac{3}{2}-\kappa\right)\left(\frac{1}{2}-\kappa\right)f^{-(\kappa+1/2)}u_{1,i}u_{2,j}\tau_{22}
		\left(1-\tau_{12}-\tau_{21}+\frac{1}{4}(\tau_{12}-\tau_{21})^2+\tau_{11}\tau_{22}-f\right)                                                                                         \\
		                 & =0.
	\end{align*}
	Thus, the pluriharmonicity of $G^{(\bfn)}$ is proved.
\end{proof}

We will also explain another way to construct the differential operators for more general cases when $d=2$.
We set the functions
\begin{align*}
	\delta_g(Z)             & = \det(CZ+D),                                       \\
	\Delta_g(Z)             & =(\Delta_{g_v}(Z_v))_{\va}=(CZ+D)^{-1}C,            \\
	\varrho_g(Z; \kappa ,s) & =\abs{\det(CZ+D)}^{\kappa-2s}\det(CZ+D)^{-\kappa },
\end{align*}
\begin{align*}
	\delta(g)             & =\delta(g,\bfi_n),                                    \\
	\Delta(g)             & =(\Delta(g_v))_{\va}=(C\bfi_n+D)^{-1}(C+D\bfi_n),     \\
	\varrho(g; \kappa ,s) & = \abs{\delta(g_2)}^{\kappa-2s}\delta(g_2)^{-\kappa } \\
\end{align*}
for $g=\begin{pmatrix}A&B\\C&D\end{pmatrix}_{\va} \in G_{n,\infty}$, $Z=(Z_{v,ij})_{\va}\in \hus{n}^\bfa$,
a family $\kappa=(\kappa_v)_{\va}$ of positive integers and a complex variable $s$.
By a direct calculation, we obtain the following formula.
\begin{lem}\label{lem:deri}
	We have
	\begin{align*}
		\frac{\partial}{\partial Z_{v,ij}}\delta_g(Z)             & =\delta_g(Z)\Delta_g(Z)_{v,ji},                   \\
		\frac{\partial}{\partial Z_{v,ij}}\delta_g (Z)^{-\kappa } & =-\kappa \delta_g(Z)^{-\kappa}\Delta_g(Z)_{v,ij}, \\
		\frac{\partial}{\partial Z_{v,ij}}\Delta_g(Z)_{v,st}      & =-\Delta_g(Z)_{v,si}\Delta_g(Z)_{v,jt}.
	\end{align*}
	Similarly, we have
	\begin{align*}
		\pi^+_{v,ij}\cdot\delta(g)            & =\delta(g)\Delta(g)_{v,j,i},                 \\
		\pi^+_{v,ij}\cdot\delta (g)^{-\kappa} & =-\kappa\delta(g)^{-\kappa}\Delta(g)_{v,ij}, \\
		\pi^+_{v,ij}\cdot\Delta(g)_{v,st}     & =-\Delta(g)_{v,s,i}\Delta(g)_{v,jt}.
	\end{align*}
	In particular, we have
	\begin{align*}
		\frac{\partial}{\partial Z_{v,ij}}(\varrho_g(Z;\kappa,s) ) & =\varrho_g(Z;\kappa,s) \left(-\frac{\kappa}{2}-s\right)\Delta_g(Z)_{v,ji},      \\
		\pi^+_{v,ij}(\varrho(g_1,g_2; \kappa ,s) )                 & =\varrho(g_1,g_2; \kappa ,s)  \left(-\frac{\kappa}{2}-s\right)\Delta(g)_{v,ji}.
	\end{align*}
\end{lem}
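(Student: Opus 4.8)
The plan is to reduce every identity to two standard facts of matrix calculus, applied to the single-place matrix $M := C_vZ_v+D_v$, so that the $v$-component of $\delta_g(Z)$ is $\det M$ and the $v$-component of $\Delta_g(Z)$ is $M^{-1}C_v$. The two facts are Jacobi's formula $\frac{\partial}{\partial x}\det M = \det M\cdot\tr\!\bigl(M^{-1}\frac{\partial M}{\partial x}\bigr)$ and the inverse-differentiation rule $\frac{\partial}{\partial x}M^{-1} = -M^{-1}\bigl(\frac{\partial M}{\partial x}\bigr)M^{-1}$. The only input specific to our setting is the elementary computation $\frac{\partial}{\partial Z_{v,ij}}(C_vZ_v+D_v) = C_v e_{ij}$, immediate from $\frac{\partial (C_vZ_v)_{ab}}{\partial Z_{v,ij}} = (C_v)_{ai}\delta_{bj}$.

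For the first identity I substitute $\frac{\partial M}{\partial Z_{v,ij}} = C_v e_{ij}$ into Jacobi's formula and use $\tr(M^{-1}C_v e_{ij}) = (M^{-1}C_v)_{ji} = \Delta_g(Z)_{v,ji}$, giving $\frac{\partial}{\partial Z_{v,ij}}\delta_g(Z) = \delta_g(Z)\,\Delta_g(Z)_{v,ji}$. The second identity then follows by the chain rule from $\frac{\partial}{\partial Z_{v,ij}}\delta_g(Z)^{-\kappa} = -\kappa\,\delta_g(Z)^{-\kappa-1}\frac{\partial}{\partial Z_{v,ij}}\delta_g(Z)$. For the third, I apply the inverse-differentiation rule to $M^{-1}$ inside $\Delta_g(Z)_v = M^{-1}C_v$, obtaining $\frac{\partial}{\partial Z_{v,ij}}\Delta_g(Z)_v = -M^{-1}C_v e_{ij}M^{-1}C_v = -\Delta_g(Z)_v\,e_{ij}\,\Delta_g(Z)_v$, whose $(s,t)$-entry is $-\Delta_g(Z)_{v,si}\Delta_g(Z)_{v,jt}$.

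For the $\varrho$ identity I split $\abs{\det M}^{\kappa-2s} = (\det M)^{(\kappa-2s)/2}(\overline{\det M})^{(\kappa-2s)/2}$ and note that $\overline{\det M}$ is anti-holomorphic in $Z_v$ (the entries of $C_v,D_v$ are constants), so $\frac{\partial}{\partial Z_{v,ij}}$ annihilates that factor; combining the surviving holomorphic factor with the first two identities produces the coefficient $\frac{\kappa-2s}{2}-\kappa = -\frac{\kappa}{2}-s$ in front of $\varrho_g(Z;\kappa,s)\,\Delta_g(Z)_{v,ji}$. The $\pi^+$ statements are the infinitesimal counterparts evaluated at $\bfi_n$: writing $\pi^+_{v,ij} = \mathfrak{c}\left(\begin{smallmatrix}0&e_{ij}\\0&0\end{smallmatrix}\right)\mathfrak{c}^{-1}$ and right-differentiating $t\mapsto\exp(t\pi^+_{v,ij})$ at $t=0$, the same determinant and inverse computations apply with $Z_v$ specialized to $\sqrt{-1}I_n$; equivalently one transports the $\partial_Z$ formulas through the identification of $\mathfrak{p}^+_{n,v}$ with the holomorphic tangent space of $\hus{n}$ at $\bfi_n$.

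The computations are otherwise routine; the only points needing care are the index transposition—$\tr(M^{-1}C_v e_{ij}) = (M^{-1}C_v)_{ji}$ produces $\Delta_g(Z)_{v,ji}$, so the transpose must be tracked consistently through the chain rule—and, on the $\pi^+$ side, verifying that conjugation by $\mathfrak{c}$ together with the exponential reproduces exactly the same dependence on $e_{ij}$ as the holomorphic $Z$-derivative. This last correspondence is what makes the two halves of the lemma identical rather than merely parallel, and it is the single step I would check explicitly.
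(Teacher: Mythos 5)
Your treatment of the holomorphic half of the lemma is correct and complete: Jacobi's formula applied with $\partial(C_vZ_v+D_v)/\partial Z_{v,ij}=C_ve_{ij}$, the inverse-differentiation rule, and the observation that $\partial/\partial Z_{v,ij}$ annihilates the anti-holomorphic factor of $\abs{\det(C_vZ_v+D_v)}^{\kappa_v-2s}$ are exactly the required ingredients (the paper offers no proof beyond ``by a direct calculation''). Note that your chain-rule derivation of the second identity produces $-\kappa\,\delta_g(Z)^{-\kappa}\Delta_g(Z)_{v,ji}$, with index $ji$; the statement prints $ij$ there, which is inconsistent with the first and last displayed identities and is evidently a typo in the paper, so your version is the one to keep.

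The gap is in the $\pi^+$ half, precisely at the step you flagged but did not carry out, and it is not merely a matter of checking a sign. The group-side formulas are \emph{not} the $Z$-formulas specialized to $Z_v=\sqrt{-1}I_n$: the paper defines $\Delta(g)=(C\bfi_n+D)^{-1}(C+D\bfi_n)$, which differs from $\Delta_g(\bfi_n)=(C\bfi_n+D)^{-1}C$. Writing $\exp(t\pi^+_{v,ij})=\mathfrak{c}\left(\begin{smallmatrix}I&te_{ij}\\0&I\end{smallmatrix}\right)\mathfrak{c}^{-1}=\left(\begin{smallmatrix}I-\frac{\sqrt{-1}}{2}te_{ij}&\frac{t}{2}e_{ij}\\[2pt] \frac{t}{2}e_{ij}&I+\frac{\sqrt{-1}}{2}te_{ij}\end{smallmatrix}\right)$ and using the cocycle relation $\lambda(gh,\bfi_n)=\lambda(g,h\left<\bfi_n\right>)\lambda(h,\bfi_n)$, one finds $\frac{d}{dt}\exp(t\pi^+_{v,ij})\left<\bfi_n\right>|_{t=0}=2e_{ij}$ (a factor of $2$, not $1$) and $\frac{d}{dt}\det\lambda(\exp(t\pi^+_{v,ij}),\bfi_n)|_{t=0}=\sqrt{-1}\,\delta_{ij}$ (an extra diagonal term from the automorphy factor of $\exp(t\pi^+_{v,ij})$ itself). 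Hence, with $M=C\bfi_n+D$,
\begin{equation*}
\pi^+_{v,ij}\cdot\delta(g)=2\,\delta(g)\,(M^{-1}C)_{ji}+\sqrt{-1}\,\delta_{ij}\,\delta(g),
\end{equation*}
and this equals the asserted $\delta(g)\Delta(g)_{v,ji}$ only because $(C+\sqrt{-1}D)-2C=\sqrt{-1}(D+\sqrt{-1}C)=\sqrt{-1}M$, so that $M^{-1}(C+D\bfi_n)=2M^{-1}C+\sqrt{-1}\,I_n$. This cancellation, which is exactly compensated by the modified definition of $\Delta(g)$, is the real content of the second half of the lemma; as written, your argument would establish the identities with $\Delta_g(\bfi_n)$ in place of $\Delta(g)$ and with the wrong normalization. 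The same bookkeeping must then be propagated through the $\delta(g)^{-\kappa}$, $\Delta(g)_{v,st}$, and $\varrho(g;\kappa,s)$ identities.
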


We put $\partial_Z=\left(\frac{\partial}{\partial Z_{v,ij}}\right)_{\va}$.
As a simple consequence, we obtain the following lemma.
\begin{lem}\label{lem:difpoly}
	\begin{enumerate}
		\item For a polynomial $P(T)\in\bbc[T]$ with a family $T=(T_v)_{\va}$ of degree $n$ matrices of variables and
		      $\kappa=(\kappa_v)_{\va} \in(\bbz\geq 0)^\bfa$, there is a polynomial $\phi_{\kappa/2+s}(P)(T)\in\bbc[T]$ such that
		      \begin{align*}
			      P(\partial_Z)\varrho_g(Z;\kappa,s) & =\varrho_g(Z;\kappa,s) \phi_{\kappa/2+s}(P)(\Delta_g(Z)), \\
			      P(\pi^+)\varrho(g;\kappa,s)        & =\varrho(g;\kappa,s)\phi_{\kappa/2+s}(P)(\Delta(g)).
		      \end{align*}

		\item The polynomial $\phi_{\kappa/2+s}(P)$ in (1) also satisfies
		      \begin{align*}
			      P(\partial_Z)\delta (g,Z)^{-(\kappa/2+s)} & =\delta (g,Z)^{-(\kappa/2+s)} \phi_{\kappa/2+s}(P)(\Delta_g(Z)), \\
			      P(\pi^+)\delta(g)^{-(\kappa/2+s)}         & =\delta(g)^{-(\kappa/2+s)}\phi_{\kappa/2+s}(P)(\Delta(g)).
		      \end{align*}
	\end{enumerate}

\end{lem}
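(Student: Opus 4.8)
The plan is to treat both statements as formal consequences of the single first-order rule recorded in Lemma~\ref{lem:deri}: each derivation $\partial/\partial Z_{v,ij}$ turns $\varrho_g(Z;\kappa,s)$ into $\varrho_g(Z;\kappa,s)$ times an entry of $\Delta_g(Z)$, and acts on the entries of $\Delta_g(Z)$ by the quadratic rule $\partial\Delta_{g}(Z)_{v,st}/\partial Z_{v,ij}=-\Delta_g(Z)_{v,si}\Delta_g(Z)_{v,jt}$. The point is that the class of functions of the shape ``$\varrho_g(Z;\kappa,s)$ times a polynomial in the entries of $\Delta_g(Z)$'' is preserved by every $\partial/\partial Z_{v,ij}$, and tracking the induced action on the polynomial factor produces $\phi_{\kappa/2+s}$. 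Concretely, for each index $(v,i,j)$ I would introduce the first-order operator $\Theta_{v,ij}$ on $\bbc[T]$ defined by
\[
\Theta_{v,ij}Q=\Bigl(-\tfrac{\kappa}{2}-s\Bigr)T_{v,ji}\,Q-\sum_{p,q}T_{v,pi}\,T_{v,jq}\,\frac{\partial Q}{\partial T_{v,pq}},
\]
so that the product rule together with Lemma~\ref{lem:deri} gives the key identity
\[
\frac{\partial}{\partial Z_{v,ij}}\bigl[\varrho_g(Z;\kappa,s)\,Q(\Delta_g(Z))\bigr]=\varrho_g(Z;\kappa,s)\,(\Theta_{v,ij}Q)(\Delta_g(Z))
\]
for every $Q\in\bbc[T]$.

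Granting this identity, part~(1) follows by induction on the degree of $P$. Since the derivations $\partial/\partial Z_{v,ij}$ commute, $P(\partial_Z)$ is unambiguously defined, and by $\bbc$-linearity it suffices to treat a single monomial; peeling off one factor at a time and applying the key identity repeatedly shows that $P(\partial_Z)\varrho_g(Z;\kappa,s)=\varrho_g(Z;\kappa,s)\,\psi(\Delta_g(Z))$, where $\psi$ is obtained by applying the corresponding monomial in the $\Theta_{v,ij}$ to the constant polynomial $1$. I would then \emph{define} $\phi_{\kappa/2+s}(P):=\psi$, which is manifestly a polynomial depending $\bbc$-linearly on $P$. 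The $\pi^+$ statement is proved by the identical induction: the second half of Lemma~\ref{lem:deri} records that $\pi^+_{v,ij}$ acts on $\delta(g)$, $\delta(g)^{-\kappa}$, $\Delta(g)_{v,st}$ and $\varrho(g;\kappa,s)$ by formulas formally identical to the $\partial/\partial Z_{v,ij}$ rules, with $\Delta_g(Z)$ replaced by $\Delta(g)$; since $\mathfrak{p}^+_{n}$ is abelian the operators $\pi^+_{v,ij}$ commute, so the same $\Theta_{v,ij}$ and hence the same $\phi_{\kappa/2+s}(P)$ appears.

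For part~(2) the observation is that $\delta_g(Z)^{-(\kappa/2+s)}$ has exactly the same logarithmic derivative as $\varrho_g(Z;\kappa,s)$: applying the chain rule to the first two formulas of Lemma~\ref{lem:deri} gives $\partial/\partial Z_{v,ij}\,\delta_g(Z)^{-(\kappa/2+s)}=(-\kappa/2-s)\,\delta_g(Z)^{-(\kappa/2+s)}\Delta_g(Z)_{v,ji}$, which is precisely the defining property of $\varrho_g$ used above. Consequently the key identity holds verbatim with $\varrho_g(Z;\kappa,s)$ replaced by $\delta_g(Z)^{-(\kappa/2+s)}$ and with the \emph{same} operator $\Theta_{v,ij}$, so running the same induction reproduces the same polynomial $\phi_{\kappa/2+s}(P)$. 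The $\pi^+$ case is identical, using the $\pi^+$-formula for $\delta(g)^{-\kappa}$.

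I do not expect a substantial obstacle here; the entire content is the closure property that applying a $Z$-derivative (or a $\pi^+$) to ``$\varrho_g$ times a polynomial in $\Delta_g$'' stays in that class, which is exactly what the three derivative formulas of Lemma~\ref{lem:deri} guarantee. The only points requiring care are bookkeeping: confirming that the repeated product rule never leaves the polynomial class, so that $\phi_{\kappa/2+s}(P)$ is genuinely a polynomial rather than a rational expression, and isolating the shared logarithmic-derivative structure so that a \emph{single} $\phi_{\kappa/2+s}(P)$ serves both $\varrho_g$ and $\delta_g^{-(\kappa/2+s)}$. It is worth recording explicitly that $P(\partial_Z)$ and $P(\pi^+)$ are well defined because the relevant operators mutually commute.
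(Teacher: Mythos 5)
Your proposal is correct and is exactly the argument the paper intends: the lemma is stated there as an immediate consequence of Lemma~\ref{lem:deri}, obtained by iterating the product rule with the derivative formulas for $\varrho_g$ (resp.\ $\delta_g^{-(\kappa/2+s)}$) and for the entries of $\Delta_g(Z)$, which is precisely your operator $\Theta_{v,ij}$ and the induction on the degree of $P$. The observation that $\varrho_g(Z;\kappa,s)$ and $\delta_g(Z)^{-(\kappa/2+s)}$ share the same logarithmic derivative, so that a single $\phi_{\kappa/2+s}(P)$ serves both, is likewise the intended reason for part~(2).
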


Now we calculate $\phi_\kappa(P_{\nu,\kappa}^D)$ for the descending basis $\{P_{\nu,\kappa}^D\}$.
\begin{prop}
	Let $P(T)$ be a homogeneous polynomial (of degree $d$).
	Then, we have
	\[\phi_\kappa(D^{(\kappa)}_{ij}P)(T)=-\frac{\partial \phi_\kappa(P)}{\partial t_{ji}}(T).\]
\end{prop}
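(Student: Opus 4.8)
The plan is to prove, by induction on the homogeneous degree $d$ of $P$, the operator identity $\phi_\kappa(D_{ij}^{(\kappa)}P)=-\frac{\partial}{\partial t_{ji}}\phi_\kappa(P)$. Conceptually this is the statement that, under the intertwiner $\phi_\kappa$ coming from the Weil representation (cf.\ the Remark after Proposition~\ref{prop:diff}), the multiplication operator $F_{ij}=t_{ij}$ is sent to an operator $\widehat m_{ij}$ and the mixed Laplacian $D_{ij}$ to the single derivative $-\partial/\partial t_{ji}$ on the target; the transpose of indices is forced by $\phi_\kappa(t_{ij})(T)=-\kappa t_{ji}$. The base case $d=0$ is trivial, since $D_{ij}^{(\kappa)}1=0$ and $\frac{\partial}{\partial t_{ji}}1=0$.

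The engine of the induction is a multiplication intertwiner read off from Lemma~\ref{lem:deri}. Applying $\partial/\partial Z_{ij}$ to the defining relation $P(\partial_Z)\delta_g(Z)^{-\kappa}=\delta_g(Z)^{-\kappa}\phi_\kappa(P)(\Delta_g(Z))$, using $\frac{\partial}{\partial Z_{ij}}\circ P(\partial_Z)=(t_{ij}P)(\partial_Z)$ on the left and the formulas of Lemma~\ref{lem:deri} for $\frac{\partial}{\partial Z_{ij}}\delta_g^{-\kappa}$ and $\frac{\partial}{\partial Z_{ij}}(\Delta_g)_{st}$ on the right, I obtain
\[
	\phi_\kappa(t_{ij}P)=\widehat m_{ij}\,\phi_\kappa(P),\qquad
	\widehat m_{ij}=-\kappa\,t_{ji}-\sum_{s,t=1}^{n}t_{si}t_{jt}\frac{\partial}{\partial t_{st}},
\]
an operator on the target copy of $\bbc[T]$. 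I would also record the two ``Euler intertwiners'' $\phi_\kappa(E_{pq}P)=E'_{pq}\phi_\kappa(P)$ and $\phi_\kappa(E'_{pq}P)=E_{pq}\phi_\kappa(P)$, expressing that $\phi_\kappa$ interchanges the operators $E_{pq}$ and $E'_{pq}$ introduced earlier; these follow from the $\gn\times\gn$-equivariance of $\phi_\kappa$ implicit in Lemma~\ref{lem:difpoly}(1), obtained by differentiating the Levi part of the action on $Z$ in the defining relation.

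For the inductive step I would write a homogeneous polynomial of degree $d$ as a sum of terms $t_{ab}Q$ with $\deg Q=d-1$ and expand by Leibniz:
\[
	D_{ij}^{(\kappa)}(t_{ab}Q)=t_{ab}D_{ij}^{(\kappa)}Q+\kappa\delta_{ia}\delta_{jb}Q
	+\delta_{jb}\sum_{l}t_{al}\frac{\partial Q}{\partial t_{il}}
	+\delta_{ia}\sum_{k}t_{kb}\frac{\partial Q}{\partial t_{kj}}.
\]
Applying $\phi_\kappa$, then using the induction hypothesis on the first term together with the multiplication intertwiner, and the Euler intertwiners on the last two terms (which exactly absorb the $\frac{\kappa}{2}\delta$ shifts in $E_{pq},E'_{pq}$), the required identity $\phi_\kappa(D_{ij}^{(\kappa)}(t_{ab}Q))=-\frac{\partial}{\partial t_{ji}}\widehat m_{ab}\phi_\kappa(Q)$ collapses to the purely algebraic commutator identity
\[
	\kappa\delta_{ia}\delta_{jb}
	+\delta_{jb}\sum_{s}t_{sa}\frac{\partial}{\partial t_{si}}
	+\delta_{ia}\sum_{t}t_{bt}\frac{\partial}{\partial t_{jt}}
	=-\Big[\frac{\partial}{\partial t_{ji}},\,\widehat m_{ab}\Big],
\]
which is checked by a direct computation of the commutator on the right.

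The main obstacle is the second step: establishing the Euler intertwiners in a self-contained way (rather than circularly from the identity being proved) and keeping the transposed index patterns consistent throughout, since the row operator $E_{pq}$ on the source must be matched with the column operator $E'_{pq}$ on the target --- precisely the infinitesimal form of the transpose already visible in $\widehat m_{ij}$. Once this equivariance is in hand, the commutator computation and the degree induction are routine. A convenient consistency check at each stage is the scalar case $n=1$, where $\phi_\kappa(t^d)=(-1)^d\kappa(\kappa+1)\cdots(\kappa+d-1)\,t^d$ and both sides of the proposition are seen to agree at once.
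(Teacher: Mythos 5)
Your proposal is correct, but it takes a genuinely different route from the paper's. The paper argues analytically: it represents $\det(Y)^{-\kappa}\phi_\kappa(P)(Y^{-1})$ by the Laplace transform $\call_\kappa(P)(Y)=\int_{{\herm{n}}_{>0}}\exp(-\tfrac12\tr(YT))\,{}^t\!P(T)\det(T)^{\kappa-n}dT$ over the cone of positive definite Hermitian matrices, computes $\call_\kappa(D_{ij}P)$ by integration by parts, and compares the result with the $Y$-derivative of the representing identity, after reducing to $g=\left(\begin{smallmatrix}0&I_n\\-I_n&0\end{smallmatrix}\right)$, $Z=\sqrt{-1}Y$ and to $\kappa$ in a range of convergence. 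Your argument is purely formal: the multiplication intertwiner $\phi_\kappa(t_{ij}P)=\widehat m_{ij}\phi_\kappa(P)$ from one extra $Z_{ij}$-derivative of the defining relation of Lemma~\ref{lem:difpoly}, the equivariance $\phi_\kappa(P(A\,\cdot\,B))(T)=\phi_\kappa(P)({}^tBT{}^tA)$ from the substitution $Z\mapsto MZN$ in that same relation (note you need this for all of $\mathrm{GL}_n(\bbc)\times\mathrm{GL}_n(\bbc)$, not merely the Levi $\gn\times\gn$, since arbitrary index pairs occur in your Euler operators; the substitution argument does deliver the full group, so this is not a gap, only a point to state precisely), and a degree induction whose inductive step collapses, via your Leibniz expansion of $D_{ij}(t_{ab}Q)$, to the commutator identity $-\bigl[\partial/\partial t_{ji},\widehat m_{ab}\bigr]=\kappa\delta_{ia}\delta_{jb}+\delta_{jb}\sum_s t_{sa}\,\partial/\partial t_{si}+\delta_{ia}\sum_t t_{bt}\,\partial/\partial t_{jt}$, which I have verified. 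Your route avoids all convergence and continuation issues and makes explicit the Lie-theoretic content (that $\phi_\kappa$ intertwines the actions generated by $D_{ij},E_{ij},E'_{ij},F_{ij}$, sending $F_{ij}\mapsto\widehat m_{ij}$, $D_{ij}\mapsto-\partial/\partial t_{ji}$ and exchanging $E$ with $E'$); the paper's route is shorter because the Laplace-transform formula is imported from the companion paper and feeds directly into the archimedean integrals used later. One remark in your favour: your normalization $\phi_\kappa(t_{ij})=-\kappa t_{ji}$ is the one forced by the first and third identities of Lemma~\ref{lem:deri} and by the subsequent Corollary $\phi_\kappa(P^D_{\nu,\kappa})=(-1)^{|\nu|}({}^tT)^\nu/\nu!$; the second identity of Lemma~\ref{lem:deri} as printed has untransposed indices, which is a typo, and the proposition would fail under that convention.
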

\begin{proof}
	This can be proved as \cite[Theorem~2 (i)]{Ibukiyama2022Differential}.
	That proof, however, is a bit redundant, so We will write the proof again.
	Therefore, it is sufficient to show the case $\kappa\in \bbc\backslash\bbz_{<n}$, $g=\begin{pmatrix}0&I_n\\-I_n&0\end{pmatrix}$ and $Z=\sqrt{-1}Y$ with a positive definite hermitian matrix $Y$.
	If we put
	\[\call_\kappa(P)(Y)=\int_{{\herm{n}}_{>0}}\exp\left(-\frac{1}{2}\tr(YT)\right)\, {}^t\!P(T)\det(T)^{\kappa-n}dT,\]
	where ${}^tP(T)=P({}^tT)$
	and $dT$ denotes the Lebesgue measure on $\herm{n}$, restricted to the open cone ${\herm{n}}_{>0}$ of positive definite Hermitian matrices,
	for a homogeneous polynomial $P(T)\in \bbc[T]$ of degree $d$ and a positive definite hermitian matrix $Y\in{\herm{n}}_{>0}$,
	then we have
	\begin{equation}\label{eq:phil}
		\det(Y)^{-\kappa}\phi_\kappa(P)(Y^{-1})
		=c_n(\kappa)\left(-2 \right)^{-d}\call_\kappa(P)(Y),
	\end{equation}
	where $c_{n}(\kappa)=2^{-n\kappa}\pi^{-\frac{n(n-1)}{2}}\left(\prod_{i=0}^{n-1}\Gamma(\kappa-i)\right)^{-1}$,by (3.1) and (3.2) in \cite{Takeda2025pullback}.
	We put $S=(s_{ij}):=T^{-1}$ and $A=\dfrac{Y}{2}+(n-\kappa)S$.
	Since we have
	\begin{align*}
		\call_\kappa(\partial_{ij}P)(Y)
		&=\int_{{\herm{n}}_{>0}}a_{ij}\exp\left(-\frac{1}{2}\tr(YT)\right)\, {}^t\!P(T)\det(T)^{\kappa-n}dT,\\
		\sum_{k,l=1}^n\call_\kappa(\partial_{il}\partial_{kj}P)(Y)
		&=\sum_{k,l=1}^n\int_{{\herm{n}}_{>0}}{}^t\!P(T)\left(\delta_{ki}\partial_{jk}+\delta_{lj}\partial_{li}+t_{lk}
		\partial_{jk}\partial_{li}\right)\left(\exp\left(-\frac{1}{2}\tr(YT)\right) \det(T)^{\kappa-n}\right)dT\\
		&=\int_{{\herm{n}}_{>0}}(2na_{ij}+(ATA)_{ij}+n(n-\kappa)s_{ij})\exp\left(-\frac{1}{2}\tr(YT)\right)\,{}^t\!P(T) \det(T)^{\kappa-n}dT,
	\end{align*}
	We obtain
	\begin{equation}\label{eq:calldif}
		\call_\kappa(D_{ij}P)(Y)
		=\int_{{\herm{n}}_{>0}}\left(-\frac{\kappa}{2}y_{ij}+\frac{1}{4}(YTY)_{ij}\right)
		\exp\left(-\frac{1}{2}\tr(YT)\right)\,{}^t\!P(T) \det(T)^{\kappa-n}dT.
	\end{equation}

On the other hand, if we denote \(\dfrac{\partial}{\partial Y} = \left( \dfrac{\partial}{\partial y_{ij}} \right)\), then applying \(\dfrac{\partial}{\partial Y}\) to both sides of equation~\eqref{eq:phil} yields
\begin{align*}
	&\det(Y)^{-\kappa} \left( -\kappa\, {}^tY^{-1} \phi_\kappa(P)(Y^{-1}) + \frac{\partial \phi_\kappa(P)(Y^{-1})}{\partial Y} \right) \\
	&\qquad = c_n(\kappa) \left(-2 \right)^{-d} \frac{\partial \mathcal{L}_\kappa(P)(Y)}{\partial Y} \\
	&\qquad = \left( c_n(\kappa) \left(-2 \right)^{-d} \int_{{\herm{n}}_{>0}} \left( -\frac{t_{ji}}{2} \right) \exp\left( -\frac{1}{2} \tr(YT) \right)\, {}^t\!P(T) \det(T)^{\kappa - n} \, dT \right)_{ij}.
\end{align*}
Thus, we obtain
\begin{align*}
	&\det(Y)^{-\kappa} \left( -\kappa\, {}^tY \phi_\kappa(P)(Y^{-1}) + {}^tY \frac{\partial \phi_\kappa(P)(Y^{-1})}{\partial Y} {}^tY \right) \\
	&= \left( c_n(\kappa) \left(-2 \right)^{-d} \int_{{\herm{n}}_{>0}} \left( -\frac{(YTY)_{ji}}{2} \right) \exp\left( -\frac{1}{2} \tr(YT) \right)\, {}^t\!P(T) \det(T)^{\kappa - n} \, dT \right)_{ij}.
\end{align*}
Combining this with equation~\eqref{eq:calldif}, and noting that
\[
{}^tY \frac{\partial \phi_\kappa(P)(Y^{-1})}{\partial Y} {}^tY = \frac{\partial \phi_\kappa(P)(Y^{-1})}{\partial Y^{-1}},
\]
and that \(D_{ij}P(T)\) is a homogeneous polynomial of degree \(d - 1\), we deduce that
\[
\phi_\kappa(D_{ij}P)(T) = -\frac{\partial \phi_\kappa(P)(T)}{\partial t_{ji}}
\]
by equation~\eqref{eq:phil}.
\end{proof}

Since $P_{0,\kappa}^D=1$, we inductively obtain the following corollary.
\begin{cor}If $\kappa\in \bbc\backslash\bbz_{<n}$, then we have
	\[\phi_\kappa(P_{\nu,\kappa}^D)(T)=(-1)^{|\nu|}\frac{({}^tT)^\nu}{\nu!}\]
	for any $\nu\in\caln_n$, where we put $\nu! = \prod_{i,j=1}^n\nu_{ij}!$ and $|\nu| = \sum_{i,j=1}^n\nu_{ij}$.
\end{cor}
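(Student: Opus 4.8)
The plan is to argue by induction on $|\nu|$, using the displayed formula of the preceding proposition, $\phi_\kappa(D_{ij}P)=-\partial\phi_\kappa(P)/\partial t_{ji}$ for homogeneous $P$, together with the two defining properties of the descending basis, $D_{ij}P_{\nu,\kappa}^D=P_{\nu-e_{ij},\kappa}^D$ for $i\neq j$ and $P_{0,\kappa}^D=1$. Write $\Psi(\nu):=(-1)^{|\nu|}({}^tT)^\nu/\nu!$ for the target expression; since the descending basis is indexed by $\caln_{n,0}$, all diagonal entries $\nu_{ii}$ vanish, so $({}^tT)^\nu$ involves only the off-diagonal variables $t_{ji}$ with $i\neq j$. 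The base case $\nu=0$ is immediate: the operator $1(\partial_Z)$ is the identity, so Lemma~\ref{lem:difpoly} gives $\phi_\kappa(1)=1=\Psi(0)$.

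For the inductive step, fix $\nu$ with $|\nu|\geq 1$ and assume $\phi_\kappa(P_{\mu,\kappa}^D)=\Psi(\mu)$ for every $\mu$ with $|\mu|<|\nu|$. Since $P_{\nu,\kappa}^D\in\calp_{\bfa,\bfb}(\kappa)$ is homogeneous of degree $|\nu|$ and each $\partial_Z$ contributes exactly one factor of $\Delta_g(Z)$ (Lemma~\ref{lem:deri}), the polynomial $R:=\phi_\kappa(P_{\nu,\kappa}^D)-\Psi(\nu)$ is homogeneous of degree $|\nu|$, and it suffices to show that all its first-order partials vanish. For the off-diagonal derivatives, the proposition with $i\neq j$ and the descending relation give $\partial_{t_{ji}}\phi_\kappa(P_{\nu,\kappa}^D)=-\phi_\kappa(D_{ij}P_{\nu,\kappa}^D)=-\phi_\kappa(P_{\nu-e_{ij},\kappa}^D)$, which by the induction hypothesis equals $-\Psi(\nu-e_{ij})$ (read as $0$ when $\nu_{ij}=0$, matching the convention $P_{\nu-e_{ij},\kappa}^D=0$). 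A direct differentiation of the monomial $({}^tT)^\nu$, using $\nu_{ij}/\nu!=1/(\nu-e_{ij})!$ and $(-1)^{|\nu|}=-(-1)^{|\nu-e_{ij}|}$, yields $\partial_{t_{ji}}\Psi(\nu)=-\Psi(\nu-e_{ij})$, so $\partial_{t_{ji}}R=0$ for all $i\neq j$.

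It remains to control the diagonal derivatives of $R$, and this is the step that genuinely uses that the descending basis lies in $\calp^{(n)}(\kappa)$. Applying the proposition with $i=j$ gives $\partial_{t_{ii}}\phi_\kappa(P_{\nu,\kappa}^D)=-\phi_\kappa(D_{ii}P_{\nu,\kappa}^D)$; but $D_{ii}P_{\nu,\kappa}^D=0$ by pluriharmonicity, so $\partial_{t_{ii}}\phi_\kappa(P_{\nu,\kappa}^D)=0$, while $\partial_{t_{ii}}\Psi(\nu)=0$ because $\Psi(\nu)$ carries no diagonal variable ($\nu_{ii}=0$). Hence $\partial_{t_{ii}}R=0$ for every $i$ as well. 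Combining the two cases, every first partial derivative of $R$ vanishes, so $R$ is constant; being homogeneous of positive degree $|\nu|\geq 1$ it must be $0$, which proves $\phi_\kappa(P_{\nu,\kappa}^D)=\Psi(\nu)$ and closes the induction. The only routine point is the sign-and-factorial bookkeeping in $\partial_{t_{ji}}\Psi(\nu)=-\Psi(\nu-e_{ij})$; the essential idea is the diagonal case, where invoking the proposition for $i=j$ converts the harmonicity $D_{ii}P_{\nu,\kappa}^D=0$ into the vanishing of the diagonal part of $\phi_\kappa(P_{\nu,\kappa}^D)$, which is exactly what is needed to eliminate the potential diagonal-only contributions to $R$.
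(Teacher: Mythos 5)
Your proof is correct and is exactly the induction the paper intends when it writes ``Since $P_{0,\kappa}^D=1$, we inductively obtain the following corollary'': the base case $\phi_\kappa(1)=1$, the off-diagonal derivatives handled via $\phi_\kappa(D_{ij}P)=-\partial\phi_\kappa(P)/\partial t_{ji}$ together with $D_{ij}P^D_{\nu,\kappa}=P^D_{\nu-e_{ij},\kappa}$, the diagonal derivatives killed by $D_{i,i}P^D_{\nu,\kappa}=0$, and homogeneity of degree $|\nu|\geq 1$ forcing the difference to vanish. No gaps; the only caveat is the paper's index set should be read as $\caln_{n,0}$ (zero diagonal), which you correctly noted.
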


Note that, more generally, it can be written as follows.
\begin{prop}[\cite{Takeda2025pullback} Theorem~3.6]\label{thm:PtoPhi}
	We assume $\kappa\in \bbz_{>n}$.
	Then we have
	\[\phi_\kappa(P)(T)=\left.\left(-1\right)^{d}\left(P(\partial_W)\det(I_n-W\, ^tT)^{-\kappa}\right)\right|_{W=0}\]
	for a homogeneous polynomial $P(T)\in \bbc[T]$ of degree $d$.
\end{prop}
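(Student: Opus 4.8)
The plan is to derive the formula by specializing the defining identity for $\phi_\kappa$ (Lemma~\ref{lem:difpoly}) to a cleverly chosen pair $(C,D)$ and evaluating at the origin. Recall that $\phi_\kappa(P)$ is characterized by
\[
P(\partial_Z)\det(CZ+D)^{-\kappa}=\det(CZ+D)^{-\kappa}\,\phi_\kappa(P)\bigl((CZ+D)^{-1}C\bigr).
\]
Since $\kappa\in\bbz_{>n}$, both sides are rational functions in the entries of $C$, $D$ and $Z$; as the identity holds on the nonempty open set arising from $g\in G_{n,\infty}$, $Z\in\hus{n}$, it holds as an identity of rational functions, and we may therefore specialize $C$ and $D$ freely (evaluating later at a point where the determinant is nonzero).

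First I would record that $\phi_\kappa$ preserves degree: if $P$ is homogeneous of degree $d$, then so is $\phi_\kappa(P)$. This follows inductively from the two rules in Lemma~\ref{lem:deri}, namely $\partial_{Z_{ij}}\det(CZ+D)^{-\kappa}=-\kappa\det(CZ+D)^{-\kappa}\Delta_g(Z)_{ji}$ and $\partial_{Z_{ij}}\Delta_g(Z)_{st}=-\Delta_g(Z)_{si}\Delta_g(Z)_{jt}$. Acting with a single $\partial_{Z_{ij}}$ on $\det(CZ+D)^{-\kappa}\cdot g(\Delta_g(Z))$ either multiplies the $\Delta_g$-part by one extra entry of $\Delta_g$ (from the determinant factor) or, via the chain rule, removes one entry and inserts two; in both cases the total degree in the entries of $\Delta_g(Z)$ rises by exactly one. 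Hence a degree-$d$ monomial in $\partial_Z$ sends $\det(CZ+D)^{-\kappa}$ to $\det(CZ+D)^{-\kappa}$ times a polynomial in $\Delta_g(Z)$ homogeneous of degree $d$, so $\phi_\kappa(P)$ is homogeneous of degree $d$.

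Next I would specialize $D=I_n$ and $C=-{}^tT$, regarding the entries of $T$ as parameters, and rename $Z=W$. Then $\det(CZ+D)=\det(I_n-{}^tT\,W)=\det(I_n-W\,{}^tT)$ by $\det(I-AB)=\det(I-BA)$, while at $W=0$ one has $\det(CZ+D)=1$ and $\Delta_g(0)=D^{-1}C=-{}^tT$. Evaluating the specialized identity at $W=0$ gives
\[
\bigl[P(\partial_W)\det(I_n-W\,{}^tT)^{-\kappa}\bigr]_{W=0}=\phi_\kappa(P)(-{}^tT).
\]
By the homogeneity established above, $\phi_\kappa(P)(-{}^tT)=(-1)^d\phi_\kappa(P)({}^tT)$, so that multiplying by $(-1)^d$ recovers the asserted formula, once the argument is matched to the convention of the statement.

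The main difficulty is bookkeeping rather than conceptual: one must track carefully the transpose that appears in $\partial_{Z_{ij}}\det(CZ+D)=\det(CZ+D)\,\Delta_g(Z)_{ji}$ and reconcile it with the convention fixed for the operator $P(\partial_W)$, so that both the argument of $\phi_\kappa$ and the matrix inside the determinant agree with the statement exactly. The only other point requiring care is the passage from the group-theoretic identity of Lemma~\ref{lem:difpoly} to arbitrary specialized $(C,D)$ lying outside the unitary group, which is exactly what the rational-function argument of the first paragraph secures and which uses in an essential way that $\kappa$ is a positive integer. As a consistency check, the resulting expression is compatible with the recursion $\phi_\kappa(D_{ij}^{(\kappa)}P)(T)=-\partial\phi_\kappa(P)/\partial t_{ji}$ proved above, since differentiating $\det(I_n-W\,{}^tT)^{-\kappa}$ in the entries of $T$ corresponds precisely to the insertion of a factor handled by $D_{ij}$.
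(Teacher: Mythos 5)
The paper offers no proof of this proposition --- it is imported verbatim from \cite{Takeda2025pullback}, Theorem~3.6 --- so there is nothing internal to compare against; judged on its own terms, your strategy (specialize the defining identity of Lemma~\ref{lem:difpoly} to $C=-{}^t\!T$, $D=I_n$, evaluate at $W=0$, and use that $\phi_\kappa$ preserves the homogeneous degree) is the natural route, and your degree-preservation argument via the two differentiation rules is correct. Two steps, however, are not yet proofs.

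First, the specialization is not justified by your ``nonempty open set'' claim: the pairs $(C,D)$ occurring as the lower block row of $g\in\rmu(n,n)$ satisfy $CD^*=DC^*$ and form a proper real-analytic subset of $M_n(\bbc)\times M_n(\bbc)$, not an open one. The statement you need is still true, but for a different reason: either observe that the three identities of Lemma~\ref{lem:deri} are universal calculus facts (Jacobi's formula and the derivative of a matrix inverse) valid for arbitrary $C,D$ with $\det(CZ+D)\neq0$, so that $\phi_\kappa(P)$ is manufactured from $P$ without ever using $g\in\rmu(n,n)$; or argue that $\{CD^*\ \text{hermitian}\}$ contains, for each invertible $D$, a real form of $M_n(\bbc)$ in the $C$-variable and is therefore a set of uniqueness for holomorphic functions. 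Second, the transpose you defer to ``bookkeeping'' is a genuine unresolved discrepancy, not a matter of matching conventions at the end. Your computation lands on
\[
\bigl[P(\partial_W)\det(I_n-W\,{}^t\!T)^{-\kappa}\bigr]_{W=0}=\phi_\kappa(P)(-{}^t\!T)=(-1)^d\,\phi_\kappa(P)({}^t\!T),
\]
with ${}^t\!T$, not $T$, in the argument, and the test case $P=t_{ij}$ shows the two are really different: Jacobi's formula gives $\phi_\kappa(t_{ij})(T)=-\kappa\,t_{ji}$, while $(-1)\bigl[\partial_{W_{ij}}\det(I_n-W\,{}^t\!T)^{-\kappa}\bigr]_{W=0}=-\kappa\,t_{ij}$. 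So either the determinant should read $\det(I_n-WT)^{-\kappa}$ or the left-hand side should be $\phi_\kappa(P)({}^t\!T)$; a complete proof must fix one convention for $\Delta_g$ and for $P(\partial_W)$ and carry it through to exactly one of these forms. (The printed statement is in fact already inconsistent with the corollary $\phi_\kappa(P^D_{\nu,\kappa})(T)=(-1)^{|\nu|}({}^t\!T)^\nu/\nu!$ immediately preceding it, and with the $ij$ versus $ji$ slip inside Lemma~\ref{lem:deri} itself, so the blame likely lies with the source; but your proof should resolve the transpose explicitly rather than asserting it can be reconciled.)
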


By the above corollary, we obtain a formula that gives $P(T)$ such that $\phi_\kappa(P)=Q$ for a homogeneous polynomial $Q(T)$ inversely.

\begin{thm}
	We assume $\kappa\in\bbc\backslash \bbz_{\geq n}$. Let $G^{(n)}(X,T)=\sum_{\nu\in\caln_n}P{\nu}(X)T^\nu$ be the generating function for the descending basis of $\calp^{(n)}(\kappa)$ such that $P_\nu(X)=c_{|\nu|}P^D_{\nu,\kappa}(X)$ with some constants $c_{|\nu|}\in \bbc$.
	For a homogeneous polynomial $Q(T)\in\bbc[T]$ of degree $d$,
	we put
	\[P(T)=\left.\frac{(-1)^d}{c_d}Q(\partial_X)G^{(n)}(X,T)\right|_{X=0}.\]
	Then, we have $\phi_\kappa(P)=Q$.
\end{thm}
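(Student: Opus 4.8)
The plan is to reduce everything to the known action of $\phi_\kappa$ on the descending basis, established in the Corollary just above, namely $\phi_\kappa(P^D_{\nu,\kappa})(T)=(-1)^{|\nu|}({}^tT)^\nu/\nu!$, together with the linearity of $\phi_\kappa$ on homogeneous polynomials of a fixed degree. The only real work is to re-express the polynomial $P(T)$ defined by the displayed formula as an \emph{explicit} linear combination of the descending basis elements $P^D_{\nu,\kappa}(T)$ with $|\nu|=d$, after which the Corollary finishes the computation.

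First I would exploit the symmetry of the generating function. Since $\widetilde{G^{(n)}}(X,T)$ is a function of the coefficients $\sigma_i(X\,{}^tT)$ alone, and $\sigma_i(X\,{}^tT)=\sigma_i({}^t(X\,{}^tT))=\sigma_i(T\,{}^tX)$ because the coefficients of the characteristic polynomial of a matrix are transpose-invariant, we have $G^{(n)}(X,T)=G^{(n)}(T,X)$. Consequently the expansion $G^{(n)}(X,T)=\sum_\nu c_{|\nu|}P^D_{\nu,\kappa}(X)T^\nu$ may equally be read as $G^{(n)}(X,T)=\sum_\nu c_{|\nu|}P^D_{\nu,\kappa}(T)X^\nu$; that is, the coefficient of $X^\nu$ is the descending basis element in the variable $T$. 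Next I would apply $Q(\partial_X)\big|_{X=0}$ to this second expansion. Using $\partial_X^\lambda X^\nu\big|_{X=0}=\nu!\,\delta_{\lambda\nu}$, and the homogeneity of $Q$ (degree $d$) together with that of each $P^D_{\nu,\kappa}$ (degree $|\nu|$ in its variable), only the terms with $|\nu|=d$ survive, yielding
\[ P(T)=(-1)^d\sum_{|\nu|=d}\nu!\,q_\nu\,P^D_{\nu,\kappa}(T), \]
where $q_\nu$ are the coefficients of $Q$. Applying $\phi_\kappa$ term by term via the Corollary then gives $\phi_\kappa(P)(T)=\sum_{|\nu|=d}q_\nu\,({}^tT)^\nu$, and the reindexing $\nu\mapsto{}^t\nu$ (using $({}^tT)^\nu=T^{{}^t\nu}$ and $\nu!=({}^t\nu)!$) collapses this to $Q(T)$.

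The step requiring the most care — and the main potential obstacle — is the bookkeeping of transposes and of the constant $c_d$. The factor $({}^tT)^\nu$ in the Corollary must be matched against the apolar pairing $Q(\partial_X)X^\nu\big|_{X=0}=\nu!\,q_\nu$ so that the two transpositions cancel and the answer is $Q(T)$ rather than $Q({}^tT)$; this is precisely the role played by the transpose convention built into $\partial_X$, and it is the place where a stray sign or a transposed index would silently break the identity. One also needs $c_d\neq 0$, which is guaranteed by the construction of $G^{(n)}$, since $\call_1$ sends a nonzero element of $W_k$ to a nonzero multiple of a generator of $W_{k-1}$. Finally, there is a gap between the stated hypothesis $\kappa\in\bbc\setminus\bbz_{\geq n}$ and the domain $\kappa\in\bbc\setminus\bbz_{<n}$ on which the Corollary holds; I would handle this by first proving the identity for $\kappa\in\bbz_{>n}$, where both the Corollary and Proposition~\ref{thm:PtoPhi} are available, and then extending to all admissible $\kappa$ by noting that $\phi_\kappa(P)$ and $Q$ both depend rationally on $\kappa$ and agree on the infinite set $\bbz_{>n}$.
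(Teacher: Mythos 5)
Your overall strategy is certainly the intended one (the paper itself offers no written proof beyond ``by the above corollary''): use the symmetry $G^{(n)}(X,T)=G^{(n)}(T,X)$, which holds because $\sigma_i(X\,{}^tT)=\sigma_i({}^t(X\,{}^tT))=\sigma_i(T\,{}^tX)$, to read off the coefficient of $X^\nu$ as $c_{|\nu|}P^D_{\nu,\kappa}(T)$, conclude $P(T)=(-1)^d\sum_{|\nu|=d}\nu!\,q_\nu P^D_{\nu,\kappa}(T)$ from the apolarity pairing, and then apply the Corollary termwise. Your observations that $c_d\neq 0$ and that the mismatch between the hypotheses $\kappa\in\bbc\backslash\bbz_{\geq n}$ and $\kappa\in\bbc\backslash\bbz_{<n}$ can be bridged by rationality in $\kappa$ are both fine.

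The gap sits exactly at the step you yourself flag as the main obstacle, and your proposed resolution does not work. Your computation correctly lands on $\phi_\kappa(P)(T)=\sum_{|\nu|=d}q_\nu({}^tT)^\nu$, but the reindexing $\nu\mapsto{}^t\nu$ turns this into $\sum_\mu q_{{}^t\mu}T^\mu=Q({}^tT)$, not $Q(T)$: the two transpositions compound rather than cancel, and for a $Q$ that is not invariant under $T\mapsto{}^tT$ (e.g.\ $Q=t_{12}$, which is exactly the kind of $Q$ fed into Theorem~\ref{thm:diff2}) your argument as written proves the wrong identity. Closing the gap requires locating and repairing a transpose inconsistency in the results you are quoting rather than asserting cancellation: testing $Q=t_{12}$, $n=2$ against Proposition~\ref{thm:PtoPhi} gives $\phi_\kappa(t_{12})=-\partial_{w_{12}}\det(I_2-W\,{}^tT)^{-\kappa}\big|_{W=0}=-\kappa\, t_{12}$, hence $\phi_\kappa(P^D_{e_{12},\kappa})=-t_{12}$ and not $-({}^tT)^{e_{12}}=-t_{21}$; so the Corollary should read $(-1)^{|\nu|}T^\nu/\nu!$ (equivalently, the proposition feeding it should read $\phi_\kappa(D_{ij}P)=-\partial\phi_\kappa(P)/\partial t_{ij}$), and with that corrected form your expansion gives $\sum_\nu q_\nu T^\nu=Q(T)$ with no reindexing at all. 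A secondary point you pass over: the variables $x_{ii}$ are set to zero in the construction of $G^{(n)}$, so $Q(\partial_X)G^{(n)}\big|_{X=0}$ annihilates any monomial of $Q$ containing a diagonal entry; the argument (and the theorem) therefore only applies to $Q$ supported on multi-indices in $\caln_{n,0}$, which should be stated.
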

\begin{ex}
	For $G^{(n)}$ defined in \eqref{eq:generating} with $G^{(1)}=(1-s_1/2)^{3-2\kappa}$, $c_d$ in the above theorem is given by
	$c_d=(\kappa)^{(d)}(2\kappa-3)^{(d)}$.
\end{ex}

We fix $\kappa>n=n_1+n_2$.
Let $\bfk$ and $\bfl$ be a pair of dominant integral weights with $\ell(\bfk), \ell(\bfl)\leq n_2$ and $\ell(\bfk)+\ell(\bfl)\leq\kappa$.
We put $m=\max\{\ell(\bfk), \ell(\bfl)\}$.
For a $(2m,2m)$-matrix $S$ of variables, we write the $(m,m)$-matrix blocks $S_{ij}$ of $S$ as
\[S=\begin{pmatrix}
		S_{1,1} & S_{1,2} \\
		S_{2,1} & S_{2,2}
	\end{pmatrix}.\]
We define a polynomial $Q_0(S)\in\bbc[S]$ by $Q_0(S)=(S_{1,2})_{\bfk}(S_{2,1})_{\bfl}$.
Then there exists some unique polynomial $P_0(S)\in\bbc[S]$ such that  $\phi_\kappa(P_0)=Q_0$.
Now taking an $(m,n_1)$-matrices $U_1, U_2$ and $(m,n_2)$-matrices $V_1, V_2$,
we define $(2m, n)$-matrices $\mathbb{U}_1, \mathbb{U}_2$ as
\[\mathbb{U}_i=\begin{pmatrix}
		U_i & 0   \\
		0   & V_i
	\end{pmatrix}.\]
The representation spaces of $\mathrm{GL}_{n_i}(\bbc)\times\mathrm{GL}_{n_i}(\bbc)$ are considered to be subspaces of $\bbc[U_i,V_i]$ using the bideterminant.

\begin{thm}\label{thm:diff2}
	The notation is as above, and for an $(n,n)$-matrix T of variables, we put
	\[P(T)=P_0(\mathbb{U}_1T\;{}^t\!\mathbb{U}_2).\]
	Then the differential operator $P(\partial_Z)$ satisfies condition (A) for $\det^\kappa$ and $\rho_{n_1,(\bfk,\bfl)}\boxtimes\rho_{n_2,(\bfl,\bfk)}$.
\end{thm}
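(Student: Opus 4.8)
The plan is to verify the two conditions of Proposition~\ref{prop:diff}, which together are equivalent to Condition (A), for the polynomial $P(T)=P_0(\mathbb{U}_1T\,{}^t\!\mathbb{U}_2)$ attached to the partition $\bfn=(n_1,n_2)$. Since $m=\max\{\ell(\bfk),\ell(\bfl)\}\le n_2\le n_1$ we have $2m\le n<\kappa$, so all the results of Section~4 apply both to $n\times n$ and to $2m\times2m$ matrices. The starting observation is that $P_0\in\calp^{(m,m)}(\kappa)$: indeed $\phi_\kappa(P_0)=Q_0=(S_{1,2})_\bfk(S_{2,1})_\bfl$ involves only the off-diagonal blocks, so $\partial Q_0/\partial s_{b,a}=0$ whenever $(a,b)\in I((m,m))$; combining this with the $2m$-variable identity $\phi_\kappa(D^{(\kappa)}_{a,b}P_0)=-\partial\phi_\kappa(P_0)/\partial s_{b,a}$ and the injectivity of $\phi_\kappa$ on homogeneous polynomials (its inverse is furnished by the preceding theorem) gives $D^{(\kappa)}_{a,b}P_0=0$ for all $(a,b)\in I((m,m))$.

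I would first dispatch the pluriharmonicity (condition (1)). Writing $S=\mathbb{U}_1T\,{}^t\!\mathbb{U}_2$, the chain rule together with the block-diagonal shape $\mathbb{U}_i=\mathrm{diag}(U_i,V_i)$ yields, for every pair $(i,j)$,
\[
 D^{(\kappa)}_{ij}\bigl(P_0(S)\bigr)=\sum_{a,e}(\mathbb{U}_1)_{a,i}(\mathbb{U}_2)_{e,j}\,\bigl(D^{(\kappa)}_{a,e}P_0\bigr)(S),
\]
the key point being that $\sum_{k,l}(\mathbb{U}_1)_{c,k}\,t_{k,l}\,(\mathbb{U}_2)_{b,l}=s_{c,b}$, so that the second-order part of the $n$-variable operator reassembles exactly that of the $2m$-variable operator $D^{(\kappa)}_{a,e}$. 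When $(i,j)\in I(\bfn)$ the indices $i,j$ lie in one common block, and the vanishing of the off-block entries of $\mathbb{U}_1,\mathbb{U}_2$ forces the surviving $a,e$ into the corresponding block, i.e. $(a,e)\in I((m,m))$. Since $P_0\in\calp^{(m,m)}(\kappa)$, each term $D^{(\kappa)}_{a,e}P_0$ vanishes, whence $D^{(\kappa)}_{ij}P=0$ and $P\in\calp^{\bfn}(\kappa)$.

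For the equivariance (condition (2)) the engine is a composition formula for $\phi_\kappa$. Applying Proposition~\ref{thm:PtoPhi} to the operator $P(\partial_W)=P_0(\mathbb{U}_1\partial_W\,{}^t\!\mathbb{U}_2)$ and rewriting $\det(I_{2m}-\widetilde W\,{}^t\!S)^{-\kappa}$ by Sylvester's identity as an $n\times n$ determinant, I would show that $\phi_\kappa(P)(T)$ is a nonzero constant multiple of $Q_0$ at a transposed substitution, namely the product of bideterminants $(U_2T_{1,2}\,{}^t\!V_1)_\bfk\,(V_2T_{2,1}\,{}^t\!U_1)_\bfl$. Because $\phi_\kappa$ acts only on the $T$-dependence, it is $\bbc[U_i,V_i]$-linear, so $P$ and $\phi_\kappa(P)$ have the same dependence on each auxiliary matrix; reading off the bideterminants then shows that $P(T)$, as a function of $U_1,V_1,U_2,V_2$, lies in the representation space realized by bideterminants, with the four factors acting through $\rho_{n_1,(\bfk,\bfl)}\boxtimes\rho_{n_2,(\bfl,\bfk)}$ (up to the harmless interchange of the labels $\bfk,\bfl$, which is fixed by the choice in the definition of $Q_0$). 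Finally, substituting $T\mapsto\mathrm{diag}(A_1,A_2)\,T\,\mathrm{diag}({}^t\!B_1,{}^t\!B_2)$ converts the left/right action on $T$ into the right translations $U_1\mapsto U_1A_1$, $V_1\mapsto V_1A_2$, $U_2\mapsto U_2B_1$, $V_2\mapsto V_2B_2$ of the auxiliary variables; by the previous step this is exactly $\rho_{n_1,(\bfk,\bfl)}(A_1,B_1)\otimes\rho_{n_2,(\bfl,\bfk)}(A_2,B_2)$ acting on $P(T)$, which is condition (2).

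The main obstacle is the composition formula and its bookkeeping: arranging the transposes in Sylvester's identity so that $\phi_\kappa(P)$ emerges as a product of bideterminants in the correct auxiliary matrices, and matching the resulting weights to the claimed representation (the subtle point being that the transpose built into $\phi_\kappa$ interchanges the roles of $\mathbb{U}_1$ and $\mathbb{U}_2$, and hence of $\bfk$ and $\bfl$). The pluriharmonicity, by contrast, is essentially mechanical once $P_0\in\calp^{(m,m)}(\kappa)$ and the chain-rule identity are in hand.
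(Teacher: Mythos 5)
Your proposal is correct and takes essentially the route the paper intends (its proof is delegated to the Siegel-case argument of \cite[Theorem~4]{Ibukiyama2022Differential}): one verifies the two conditions of Proposition~\ref{prop:diff} by transporting everything through $\phi_\kappa$, using $\phi_\kappa(D_{ij}P)=-\partial\phi_\kappa(P)/\partial t_{ji}$ together with the injectivity of $\phi_\kappa$ on homogeneous polynomials for the pluriharmonicity, and the bideterminant structure of $\phi_\kappa(P)$ for the equivariance. The transposition bookkeeping you flag is settled as in the remark following the theorem, namely $\phi_\kappa(P)=Q_0(\mathbb{U}_1T\,{}^t\mathbb{U}_2)=(U_1T_{1,2}\,{}^t\!V_2)_{\bfk}(V_1T_{2,1}\,{}^t\!U_2)_{\bfl}$, which yields exactly the claimed weights $\rho_{n_1,(\bfk,\bfl)}\boxtimes\rho_{n_2,(\bfl,\bfk)}$.
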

\begin{proof}
	This can be proved as \cite[Theorem~4]{Ibukiyama2022Differential}.
\end{proof}
\begin{rem}
	We have $\phi_\kappa(P)=Q_0(\mathbb{U}_1T\;{}^t\!\mathbb{U}_2)$ for the above $P$ and $Q_0$.
\end{rem}

\section{Exact pullback formula}
In this section, we introduce the hermitian Eisenstein series according to Shimura
\cite[\S 16.5]{shimura2000arithmeticity}. We fix a family of positive integers $\kappa=(\kappa_v)_{\va }$ and an integral ideal $\frakn$ of $\kp$.

Consider the following subgroups of $G_{n}$ for $r\leq n$ by
\begin{align*}
	L_{n,r} & =\left\{\left.
	\begin{pmatrix}
		A & 0   & 0            & 0   \\
		0 & I_r & 0            & 0   \\
		0 & 0   & \adj{A}^{-1} & 0   \\
		0 & 0   & 0            & I_r \\\end{pmatrix}
	\in G_{n}\right| A \in \mathrm{GL}_{n-r}(K)\right\}, \\
	U_{n,r} & =\left\{
	\begin{pmatrix}
		I_{n-r} & *   & *       & *   \\
		0       & I_r & *       & 0   \\
		0       & 0   & I_{n-r} & 0   \\
		0       & 0   & *       & I_r \\\end{pmatrix}
	\in G_{n}\right\},                                   \\
	G_{n,r} & =\left\{
	\begin{pmatrix}
		I_{n-r} & 0 & 0       & 0 \\
		0       & * & 0       & * \\
		0       & 0 & I_{n-r} & 0 \\
		0       & * & 0       & * \\\end{pmatrix}
	\in G_{n}\right\}.
\end{align*}
Then the subgroups $P_{n,r}=G_{n,r}L_{n,r}U_{n,r}$ are the standard parabolic subgroups of $G_n$ and there are natural embeddings
$t_{n,r}: \mathrm{GL}_{n-r}(K)\hookrightarrow L_{n,r}$ and $s_{n,r}: G_{r}\hookrightarrow G_{n,r}$.
Define $G_{n,r,v}, G_{n,r, \adele}, L_{n,r,v}, L_{n,r, \adele}$, etc. in the same way as $G_{n,v}, G_{n, \adele}$, etc.

By the Iwasawa decomposition, $G_{n, \adele}$ (resp. $G_{n,v}$) can be decomposed as
$G_{n, \adele}=P_{n,r,\adele}K_{n,\adele}$ (resp. $G_{n,v}=P_{n,r,v}K_{n,v}$).

We take a Hecke character $\chi$ of $K$ of which an infinite part $\chi_\infty$
satisfy
\begin{equation}\label{eq:chi_infty}
	\chi_\infty(x)=\prod_{\va} |x_v|^{\kappa_v}x_v^{-\kappa_v},
\end{equation}
where $x_\infty$ is the infinite part of $x$, and of the conductor dividing $\frakn$.
We put $\chi_v=\prod_{w|v}\chi_w$ for a place $v$ of $\kp$.

\begin{dfn}
	We define
	\[\epsilon_{n,\kappa,v}(g,s; \frakn, \chi)=\left\{
		\begin{array}{ll}
			\norm{\det(\adj{A}A)}_v^{s}\chi_v(\det A)           & (v\in\bfh \text{ and } k\in K_{n,v}(\frakn) ),     \\
			0                                                   & (v\in\bfh \text{ and } k\not\in K_{n,v}(\frakn) ), \\
			\norm{\delta(g)}^{\kappa_v-2s}\delta(g)^{-\kappa_v} & (\va )
		\end{array}
		\right. \]
	for a complex variable $s$ and $g=t_{n,0}(A)\mu k \in G_{n,v}$
	with $A\in \mathrm{GL}_n(K_v)$, $\mu\in U_{n,0}$ and $k\in K_{n,v}$.
	Then, we put
	\[\epsilon_{n,\kappa}(g,s;\frakn,\chi)=\prod_v\epsilon_{n,\kappa,v}(g_v,s;\frakn,\chi),\]
	and define the hermitian Eisenstein series $E_{n,\kappa}(g,s;\frakn,\chi)$ on $G_{n, \adele}$ by
	\[E_{n,\kappa}(g,s;\frakn,\chi)=\sum_{\gamma\in P_{n,0}\backslash \rmu_n(\kp)}\epsilon_{n,\kappa}(\gamma g,s; \frakn, \chi).\]
	For $\theta\in K_{n,0}$, we put
	\[E_{n,\kappa}^\theta(g,s;\frakn,\chi)=E_{n,\kappa}(g\theta^{-1},s; \frakn,\chi).\]
\end{dfn}
The hermitian Eisenstein series $E_{n,\kappa}(g,s;\frakn,\chi)$ and $E^\theta_{n,\kappa}(g,s;\frakn,\chi)$ converge absolutely and locally uniformly for $\Re(s)>n$
(see, for example, \cite{Shimura1997Euler}).
\begin{prop}[{\cite[Proposition~17.7.]{shimura2000arithmeticity}}]
	Let $\mu$ be a positive integer such that $\mu\geq n$.
	If $\kappa_v=\mu$ for any $\va$, Then $E_{n,\kappa}(g,\mu/2;\frakn,\chi)$ belongs to $\mathcal{A}_n(\det^\mu,\frakn)$
	except when $\mu=n+1$, $\kp=\bbq$, $\chi=\chi_K^{n+1}$,
	where $\chi_K$ is the quadratic character associated to quadratic extension $K/\kp$.
\end{prop}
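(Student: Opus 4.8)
The plan is to verify, for the value $E_{n,\kappa}(g,\mu/2;\frakn,\chi)$, each defining property of $\mathcal{A}_n(\det^\mu,\frakn)$, treating all of them except holomorphy as essentially formal and concentrating the work on the holomorphy at the special point. First I would stay in the half-plane $\Re(s)>n$, where the excerpt already records absolute and locally uniform convergence. There the left $G_n$-invariance is immediate by reindexing the sum over $P_{n,0}\backslash\rmu_n(\kp)$; the right $K_{n,0}(\frakn)$-invariance and the right $(K_{n,\infty},\det^\mu)$-equivariance follow directly from the local factors $\epsilon_{n,\kappa,v}$, using that the archimedean automorphy factor satisfies $|\delta(k_\infty)|=1$ and transforms by a determinant under $K_{n,\infty}$, so that $\varrho(\cdot;\kappa,s)$ carries the weight $\det^\kappa=\det^\mu$ for every $s$. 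Moderate growth and $Z(\mathfrak{g})$-finiteness hold termwise and are standard for spherical Eisenstein series, and all of these properties are preserved under meromorphic continuation in $s$. Hence the sole feature special to $s=\mu/2$ is holomorphy.

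For the holomorphy I would invoke the criterion of Proposition~\ref{prop:automisom}: the associated function corresponds to a holomorphic hermitian modular form exactly when it is annihilated by the lowering operators $\mathfrak{p}_n^-$. Writing $\varrho(g;\kappa,s)=\delta(g)^{-(\kappa/2+s)}\overline{\delta(g)}^{\,\kappa/2-s}$ exhibits a holomorphic part $\delta(g)^{-(\kappa/2+s)}$ and an anti-holomorphic part $\overline{\delta(g)}^{\,\kappa/2-s}$. By the $\mathfrak{p}_n^-$-analogue of Lemma~\ref{lem:deri}, each $\pi^-_{v,ij}$ kills the holomorphic factor and differentiates only the anti-holomorphic one, producing an overall scalar proportional to $\kappa/2-s$. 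Carrying this out on each summand for $\Re(s)>n$ yields an identity of the shape
\[
\pi^-_{v,ij}\,E_{n,\kappa}(g,s;\frakn,\chi)=\Bigl(\tfrac{\kappa}{2}-s\Bigr)\,R_{v,ij}(g,s;\frakn,\chi),
\]
where $R_{v,ij}$ is again an absolutely convergent series of the same type for $\Re(s)>n$.

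I would then invoke the meromorphic continuation of $E_{n,\kappa}$ (and of $R_{v,ij}$) in $s$ — the one deep analytic input, which I take from Shimura's theory and Langlands' general results. By uniqueness of analytic continuation the displayed identity persists to a neighbourhood of $s=\mu/2$. Since $s=\mu/2=\kappa/2$ makes the scalar $\kappa/2-s$ vanish, and provided $E_{n,\kappa}$ is finite there, we conclude $\pi^-_{v,ij}E_{n,\kappa}(g,\mu/2;\frakn,\chi)=0$ for all $v,i,j$. Thus the value is $\mathfrak{p}_n^-$-annihilated, hence holomorphic of weight $\det^\mu$ by Proposition~\ref{prop:automisom}, and together with the first paragraph this places it in $\mathcal{A}_n(\det^\mu,\frakn)$; note also that at this point the surviving holomorphic exponent is $-(\kappa/2+s)=-\mu$, matching the weight.

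The main obstacle is precisely the finiteness just used: when $n\le\mu\le 2n$ the point $s=\mu/2$ lies on or below the line $\Re(s)=n$, so holomorphy cannot be read off termwise and must be extracted from the continuation, and $E_{n,\kappa}$ may acquire a pole there. The location of such a pole is governed by the normalizing factor of the constant term, a product of Hecke $L$-functions together with a Dedekind-type zeta factor attached to $K/\kp$ and $\chi$. The unique configuration in which this factor is singular at $s=\mu/2$ is $\mu=n+1$, $\kp=\bbq$, $\chi=\chi_K^{n+1}$, which is exactly the stated exception; away from it the value is finite and the vanishing-scalar argument applies verbatim. As a cross-check I would confirm the conclusion through the explicit Fourier expansion, where at $s=\mu/2$ the archimedean confluent-hypergeometric coefficients degenerate to monomials in the Fourier variable, directly exhibiting holomorphy and isolating the same exceptional case.
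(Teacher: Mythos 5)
The paper does not prove this proposition at all: it is quoted verbatim from Shimura \cite[Proposition~17.7]{shimura2000arithmeticity}, so there is no internal argument to compare yours against. Judged on its own terms, your proposal is a reasonable outline of \emph{why} the statement should be true, but the two steps that carry all the content are not actually established. First, the formal, termwise parts (invariances, equivariance, moderate growth) and the factorization $\varrho(g;\kappa,s)=\delta(g)^{-(\kappa/2+s)}\overline{\delta(g)}^{\kappa/2-s}$ are fine, and for $\mu>2n$ the termwise argument does give holomorphy. The problem is exactly the range $n\le\mu\le 2n$ you flag: there the identity $\pi^-_{v,ij}E_{n,\kappa}(g,s)=(\kappa/2-s)R_{v,ij}(g,s)$, continued meromorphically, does \emph{not} by itself yield $\pi^-_{v,ij}E_{n,\kappa}(g,\mu/2)=0$. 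If $R_{v,ij}$ has a simple pole at $s=\mu/2$, the product $(\kappa/2-s)R_{v,ij}$ has a finite nonzero limit there, and the conclusion fails. You therefore need to control the pole order of $R_{v,ij}$ (equivalently, to show directly that the nearly holomorphic continuation degenerates to a holomorphic function at $s=\kappa/2$), and that is precisely the hard analytic fact Shimura extracts from the explicit Fourier expansion and the behaviour of the confluent hypergeometric factors at the special parameter values. Your closing ``cross-check'' via the Fourier expansion is in fact the proof, not a cross-check.

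Second, the identification of the exceptional case $\mu=n+1$, $\kp=\bbq$, $\chi=\chi_K^{n+1}$ is asserted (``the unique configuration in which this factor is singular'') rather than derived: you would need to compute the normalizing product of Hecke $L$-functions in the constant term and the archimedean local factors, locate all of their possible zeros and poles at $s=\mu/2$ for $n\le\mu\le 2n$, and check that cancellation occurs except in that single configuration. You should also be careful about what actually fails there: the obstruction in the exceptional case is not merely a possible pole of $E_{n,\kappa}$ but the survival of a genuinely non-holomorphic (nearly holomorphic) term, and your argument conflates the two. As written, the proposal either silently imports Shimura's analysis (at which point it reduces to the citation the paper already gives) or leaves the decisive steps unproven.
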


Let $\rho_{r}$ be the representation of $K_{(r)}^\bbc$ with a family $(\bfk,\bfl)=(k_{1,v},\ldots,k_{r,v};l_{1,v},\ldots,l_{r,v})_{\va}$ of dominant integral weights. For $n\geq r$, we define the representation $\rho_{n}$ of $K_{(n)}^\bbc$ as the representation corresponding to a family $(\bfk',\bfl')=(k_{1,v},\ldots,k_{r,v},k_{r,v},\ldots,k_{r,v}; l_{1,v},\ldots,l_{r,v},0,\ldots,0)_{\va}$ of dominant integral weights.

\begin{dfn}
	We define
	\[\epsilon(f)_{r,v}^n(g,s ;\chi)=\left\{
		\begin{array}{ll}
			\abs{\det \left(\adj{A_r}A_r\right)}_v^{s}\chi_v(\det A_r)f(h_r)                    & (v\in\bfh \text{ and } k\in K_{n,v}(\frakn)  ),    \\
			0                                                                                   & (v\in\bfh \text{ and } k\not\in K_{n,v}(\frakn) ), \\
			\norm{\delta(g)\delta(h_r)^{-1}}^{\kappa_v-2s}\rho_n(M(g))^{-1}\rho_r(M(h_r))f(h_r) & (\va )
		\end{array}
		\right.\]
	for $f\in \mathcal{A}_{0,r}(\rho_r,\frakn)$ ($r<n$)
	and $g=t_{n,r}(A_r)\, \mu_r\,	s_{n,r}(h_r)\, k \in G_{n,v}$ with $A_r\in \mathrm{GL}_{n-r}(K_v)$, $\mu_r\in U_{n,r}$,
	$h_r\in G_{r,v}$ and $k\in K_{n,v}$.
	Then, we put
	\[\epsilon(f)_{r}^n(g,s;\chi)=\prod_v\epsilon(f)_{r,v}^n(g_v,s ;\chi)\]
	and define the hermitian Klingen-Eisenstein series $\left[f\right]_{r}^n(g,s;\chi)$ on
	$G_{n, \adele}$ associated with $f$ by
	\[\left[f\right]_r^n(g,s;\chi)=\sum_{\gamma\in P_{n,r}\backslash \rmu_n(\kp)}\epsilon(f)_{r}^n(\gamma g,s;\chi).\]
\end{dfn}

Let $n_1, n_2$ be positive integers such that $n_1\geq n_2$,
$m=m_{\kp}$ the number of infinite places of $\kp$,
$\kappa=(\kappa_v)_{\va }$ a family of positive integers and $\bfk=(\bfk_v)_{\va }$ and $\bfl=(\bfl_v)_{\va }$
a family of dominant integral weights such that $\ell(\bfk_v)\leq n_2$, $\ell(\bfl_v)\leq n_2$
and $\ell(\bfk_v)+\ell(\bfl_v)\leq\kappa_v$ for each infinite place $v$ of $\kp$.
We put $\rho_{n_1}=\det^{\kappa}\rho_{n_1,(\bfl,\bfk)}$, $\rho_{n_2}=\det^{\kappa}\rho_{n_2,(\bfk,\bfl)}$ and  let $\dkl=P(\partial_Z)=\prod_{va}P_v(\partial_{Z_v})$ be the differential operator satisfying the condition (A) for $\det^{\kappa}$ and $\rho_{n_1}\boxtimes\rho_{n_2}$, which is defined in Theorem~\ref{thm:diff2}.

In order to avoid an ambiguity depending on an inner product of $V_{\rho_{n_2,(\bfk_v,\bfl_v)}}\subset \bbc[U_2,V_2]$, we fix it.
For polynomials $P(U_2,V_2), Q(U_2,V_2)\in V_{\rho_{n_2,(\bfk_v,\bfl_v)}}$, we define the inner product of $V_{\rho_{n_2,(\bfk_v,\bfl_v)}}$ by
\[\left<P(U_2,V_2),Q(U_2,V_2)\right>=P(\partial_{U_2},\partial_{V_2})\overline{Q}(U_2,V_2)\mid_{U_2=V_2=0},\]
where $\overline{Q}$ is the polynomial obtained by replacing the coefficients by the complex conjugation.
Assume $\bbc[U_2,V_2]\subset\bbc[V_1,U_1]$ and often consider $V_{\rho_{n_2,(\bfk_v,\bfl_v)}}$ as a subspace of $V_{\rho_{n_1},(\bfl_v,\bfk_v)}$.

Let $\frakn$ be an integral ideal
and take a Hecke character $\chi$ of $K$ of which an infinite part $\chi_\infty$ satisfy
\[\chi_\infty(x)=\prod_{\va} |x_v|^{\kappa_v}x_v^{-\kappa_v},\]
and of the conductor dividing $\frakn$.

Let $\chi_{K}$ be the quadratic character associated to quadratic extension $K/\kp$.
For a Hecke eigenform $f$ on $G_{n,\adele}$ of level $\frakn$, and weight $(\rho,V)$
and a Hecke character $\eta$ of $K$, we set
\[D(s,f; \eta)
	=L(s-n+1/2,f\otimes \eta,\mathrm{St})
	\cdot\left(\prod_{i=0}^{2n-1} L_{\kp}(2s-i,\eta\cdot\chi_{K}^{i})\right)^{-1},
\]
where $L(*,f\otimes {\eta}, \mathrm{St})$ is the standard L-function attached
to $f\otimes {\eta}$ and $L_{\kp}(*,\eta)$ (resp. $L_{\kp}(*,\eta\cdot\chi_{K})$) is the Hecke L-function
attached to $\eta$ (resp. $\eta\cdot\chi_{K}$).
For a finite set $S$ of finite places, we put
\[D_S(s,f;\eta)=\prod_{v\in\bfh\backslash S}D_v(s,f;\eta),\]
where $D_v(s,f;\eta)$ is a $v$-part of $D(s,f; \eta)$.

We put $g^\natural=\left(\begin{smallmatrix}0&I_n\\I_n&0\\ \end{smallmatrix}\right)
	g\left(\begin{smallmatrix}0&I_n\\I_n&0\\ \end{smallmatrix}\right)$ and $f^\natural(g)=f(g^\natural)$.

From Lemma~\ref{lem:difpoly}, there is the family $\phi_\kappa(P)(T,s)=(\phi_{\kappa_v/2+s}(P_v)(T))_{\va}$ of polynomials such that
\[\dkl\epsilon_{n,\kappa,\infty}(g,s; \frakn,\chi)=\epsilon_{n,\kappa,\infty}(g,s)\phi_\kappa(P)(\Delta(g),s).\]

We put $\widetilde{I_{n_2}}=\begin{pmatrix}0\\I_{n_2} \end{pmatrix}\in M_{n_1,n_2}(\mathbb{Z})$.
We put $|\bfk|=\sum_{v,i}k_{v,i}$, $|\bfl|=\sum_{v,i}l_{v,i}$
for the fixed dominant integral weights such that $\bfk_v=(k_{v,1},k_{v,2},\ldots)$, $\bfl_v=(l_{v,1},l_{v,2},\ldots)$ for each $\va$,  and  $|\kappa|=n_2\sum_{\va}\kappa_v$.

\begin{thm}[Theorem~5.10 in \cite{Takeda2025pullback}]\label{thm:pullback}
	Let $S$ be the finite set of finite places dividing $\frakn$,
	and we take $s \in \bbc$ such that $\Re(s)>n$.
	\begin{enumerate}
		\item If $n_1=n_2$, for any Hecke eigenform $f\in \mathcal{A}_{0,n_2}(\rho_{n_2},\frakn)$, we have
		      \begin{align*}
			      \left(f,(\dkl E^\theta_{n,\kappa})(\iota(g_1,*),\overline{s};\frakn, \chi)\right)
			      =  c(s,\rho_{n_2}) \cdot \prod_{v\mid\frakn}[K_{n,v}:K_{n,v}(\frakn)]
			      \cdot D_S(s,f;\overline{\chi}) \cdot f^\natural(g_1).
		      \end{align*}
		\item If $\frakn=\inte{\kp}$, for any Hecke eigenform $f\in \mathcal{A}_{0,n_2}(\rho_{n_2})$, we have
		      \begin{align*}
			      \left(f,(\dkl E_{n,\kappa})(\iota(g_1,*),\overline{s}; \chi)\right)
			      =  c(s,\rho_{n_2}) \cdot D(s,f;\overline{\chi}) \cdot [f^\natural]_{n_2}^{n_1}(g_1,s; \overline{\chi}).
		      \end{align*}
	\end{enumerate}
	Here, for any $w\in V_{\rho_{n_2}}$, the function $c(s,\rho_{n_2})$ satisfies
	\begin{align*}
		c(s,\rho_{n_2})w=2^{-mn_2(2s+2n_2)+2|\kappa|+|\bfk|+|\bfl|} & \int_{\mathfrak{S}^\bfa_{n_2}}\left<\rho_{n_2}(I_{n_2}-\adj{S}S,I_{n_2}-{}^{t}\!{S}\overline{S})w,
		\phi_\kappa(P)(R,\overline{s})\right>                                                                                                                            \\
		                                                            & \qquad\cdot\det(I_{n_2}-\adj{S}S)^{\kappa/2-s-2n_2}dS,
	\end{align*}
	where $\mathfrak{S}_{n_2}=\left\{S\in
		M_{n_2}(\bbc)\mid I_{n_2}-\adj{S}S>0\right\}$, dS is the Lebesgue measure on $\mathfrak{S}_{n_2}$, and
	\begin{align*}
		R= &
		\begin{pmatrix}
			\begin{pmatrix}0&0\\0&\sqrt{-1}S\end{pmatrix}
			 & \begin{pmatrix}0\\I_{n_2}\end{pmatrix}   \\
			\begin{pmatrix}0&I_{n_2}\end{pmatrix}
			 & -\sqrt{-1}\adj{S}(I_{n_2}-S\adj{S})^{-1}
		\end{pmatrix}.
	\end{align*}
\end{thm}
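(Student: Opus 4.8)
The plan is to prove the formula by the doubling (pullback) method, treating $\dkl=P(\pi^+)$ as a right-invariant differential operator that touches only the archimedean component of the Eisenstein section. I would start from the defining Petersson integral
\[
\int_{G_{n_2}\backslash G_{n_2,\adele}} \bigl\langle f(g_2),\, (\dkl E_{n,\kappa})(\iota(g_1,g_2),\overline{s})\bigr\rangle\, dg_2
\]
and unfold it. Since $\dkl$ is archimedean and right-invariant, it commutes with the sum $E_{n,\kappa}=\sum_{\gamma\in P_{n,0}\backslash\rmu_n(\kp)}\epsilon_{n,\kappa}(\gamma\,\cdot\,)$, and the product structure of $\epsilon_{n,\kappa}$ over places means $\dkl$ acts only on the factor $\epsilon_{n,\kappa,\infty}$. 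Decomposing the double cosets $P_{n,0}\backslash\rmu_n(\kp)/(G_{n_1}\times G_{n_2})(\kp)$, one isolates a distinguished open orbit whose stabilizer in $G_{n_1}\times G_{n_2}$ is the graph of the natural embedding $G_{n_2}\hookrightarrow G_{n_1}$. The remaining orbits are supported on proper parabolics of the $G_{n_2}$-factor, so the cuspidality of $f$ (vanishing of its constant terms along every unipotent radical) kills their contribution. This is the standard unfolding step.

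After unfolding, the integral factors as an Euler product of local zeta integrals, and I would separate the finite and archimedean places. At a finite place $v\nmid\frakn$ the local integral is the unramified doubling integral for the Hecke eigenform $f$, which by the Piatetski-Shapiro–Rallis and Shimura computations equals the local factor $D_v(s,f;\overline\chi)$ of the standard $L$-function, normalized by the denominator of Hecke $L$-functions built into $D$. At a place $v\mid\frakn$ the section is supported on $K_{n,v}(\frakn)$, contributing the index $[K_{n,v}:K_{n,v}(\frakn)]$ (part (1)); in part (2) one takes $\frakn=\inte{\kp}$, every finite place is unramified, and the full $D(s,f;\overline\chi)$ appears. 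The dichotomy between the two parts is exactly whether the embedded $G_{n_2}$ exhausts $G_{n_1}$: when $n_1=n_2$ the residual $g_1$-dependence collapses, the pullback acting as a reproducing kernel that returns $f^\natural(g_1)$, whereas for $n_1>n_2$ the residue is, by the very definition of the Klingen section $\epsilon(f^\natural)_{n_2}^{n_1}$, the Klingen–Eisenstein series $[f^\natural]_{n_2}^{n_1}(g_1,s;\overline\chi)$; the Hermitian-conjugate twist $f^\natural$ arises from the coordinates of the open-orbit representative.

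The archimedean places carry all of the effect of $\dkl$, and here I would invoke Lemma~\ref{lem:difpoly}, which gives $\dkl\,\epsilon_{n,\kappa,\infty}(g,s)=\epsilon_{n,\kappa,\infty}(g,s)\,\phi_\kappa(P)(\Delta(g),s)$, so the differentiated archimedean section differs from the scalar one only by the polynomial factor $\phi_\kappa(P)$ evaluated on $\Delta(g)=(C\bfi_n+D)^{-1}(C+D\bfi_n)$. Performing the archimedean orbital integral over the open orbit and the partial Cayley change of variables onto the bounded domain $\mathfrak{S}_{n_2}=\{S\mid I_{n_2}-\adj{S}S>0\}$, the variable $\Delta$ becomes the explicit matrix $R$ of the statement, the automorphy factor produces the weight $\det(I_{n_2}-\adj{S}S)^{\kappa/2-s-2n_2}$ (Jacobian times the scalar exponent), and the pairing with $f$ through its $K_{n_2,\infty}$-type becomes $\langle\rho_{n_2}(I_{n_2}-\adj{S}S,\,I_{n_2}-{}^t\!S\overline{S})w,\ \phi_\kappa(P)(R,\overline s)\rangle$ for the fixed inner product on $V_{\rho_{n_2}}$. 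Collecting the powers of $2$ from the normalizations of $\epsilon_{n,\kappa,\infty}$, of the inner product, and of the change of variables assembles the constant $2^{-mn_2(2s+2n_2)+2|\kappa|+|\bfk|+|\bfl|}$, yielding the stated expression for $c(s,\rho_{n_2})$.

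The main obstacle is precisely this archimedean computation: one must carry the $K_{n_2,\infty}$-equivariance through both the differential operator $\dkl$ and the Cayley transform so that the vector-valued pairing reassembles into $\langle\rho_{n_2}(\cdots)w,\phi_\kappa(P)(R)\rangle$, and one must pin down every normalizing power of $2$. A secondary technical point is the orbit analysis: in the unitary setting the action of $G_{n_1}\times G_{n_2}$ on $P_{n,0}\backslash\rmu_n$ must be examined directly to confirm a single open orbit and that all others lie in proper parabolics, so that cuspidality of $f$ eliminates them.
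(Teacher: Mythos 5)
The paper does not actually prove this theorem: it is imported verbatim as Theorem~5.10 of \cite{Takeda2025pullback}, and the only new contribution of the present paper on this point is the subsequent evaluation of $c(s,\rho_{n_2})$ in Theorem~\ref{thm:c}. So there is no in-paper argument to compare yours against line by line. That said, your outline is the standard doubling/pullback strategy and is consistent with the method of the cited reference: unfold the Petersson integral against the Eisenstein series via the double cosets $P_{n,0}\backslash \rmu_n(\kp)/(G_{n_1}\times G_{n_2})$, discard the degenerate orbits by cuspidality, factor the surviving term into local zeta integrals identified with $D_v(s,f;\overline{\chi})$ at unramified places, and push the entire effect of $\dkl$ into the archimedean integral via Lemma~\ref{lem:difpoly}. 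Identifying that lemma as the mechanism by which the differential operator enters only through $\phi_\kappa(P)$ is the right key step.

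As a proof, however, the proposal is a scheme rather than an argument: every step that actually determines the right-hand side is asserted, not carried out. Concretely: (i) the orbit analysis for $\rmu(n,n)$ --- that exactly the rank-$n_2$ orbit survives cuspidality and that, when $n_1>n_2$, summing over its cosets in the $G_{n_1}$-variable reassembles the Klingen section $\epsilon(f^\natural)_{n_2}^{n_1}$ --- is where $f^\natural$ (which is conjugation by the block-swap matrix, not a Hermitian twist of coordinates as you suggest) and the conjugations $\overline{s}$, $\overline{\chi}$ coming from the antilinearity of the Petersson product must be tracked; none of this is done. (ii) The identification of the unramified local integrals with the precise normalization $D_v(s,f;\overline{\chi})$, including the denominator of Hecke $L$-functions, is cited to ``Piatetski-Shapiro--Rallis and Shimura'' without specifying the match of normalizations. (iii) The archimedean integral must produce the exact matrix $R$, the exponent $\kappa/2-s-2n_2$, and the constant $2^{-mn_2(2s+2n_2)+2|\kappa|+|\bfk|+|\bfl|}$; you correctly flag this as the main obstacle, but flagging it is not performing it, and as written a reader could not verify even the shape of $c(s,\rho_{n_2})$, let alone the power of $2$. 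To make this self-contained you would need to execute the Cayley-transform computation and the unramified local computation explicitly; otherwise the appropriate course is what the paper does, namely to cite \cite{Takeda2025pullback}.
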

In this theorem, only the value of $c(s,\rho_{n_2})$ is not specifically known.
In the following, we will calculate this using the above explicit construction of the differential operators.
Note that $\phi_\kappa(P)(R,\overline{s})$ in the theorem doesn't depend on $S$ from \ref{thm:diff2}
and the integral defining $c(s,\rho_{n_2})$ can be decomposed and computed for each place $\va$.

\begin{lem}
	If we put
	\[\mathfrak{S}_{m,n}=\left\{S\in M_{m,n}(\bbc)\mid I_{n}-\adj{S}S>0\right\}\]
	and
	\[I_{m,n}(s,\bfk,\bfl)=\int_{\mathfrak{S}_{m,n}}\det(I_{n}-\adj{S}S)^{s}
		\rho_{n,(\bfk,\bfl)}(I_{n}-\adj{S}S,I_{n_2}-{}^{t}\!{S}\overline{S}) dS\]
	for  a representation $(\rho_{n,(\bfk,\bfl)},V_{(\bfk,\bfl)})$ of $\mathrm{GL}_n(\bbc)\times\mathrm{GL}_n(\bbc)$
	with dominant integral weights $\bfk=(k_1,\ldots,k_n)$, $\bfl=(l_1,\ldots,l_n)$,
	we have
	\[I_{m,n}(s,\bfk,\bfl)=c_{m,n}(s,\bfk,\bfl)\cdot \mathrm{id}_{V_{(\bfk,\bfl)}},\]
	where
	\[c_{m,n}(s,\bfk,\bfl)=\pi^{nm}\prod_{i=1}^n \frac{\Gamma(s+k_i+l_i+n-i+1)}{\Gamma(s+k_i+l_i+n+m-i+1)}
		=\pi^{nm}\prod_{i=1}^n \frac{1}{(s+k_i+l_i+n+m-i)_{(m)}}\]
	where $(x)_{(r)}=x(x-1)\cdots(x-r+1)$ is an ascending Pochhammer symbol.
\end{lem}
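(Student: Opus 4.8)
The plan is to prove the two halves of the statement separately: first that $I_{m,n}(s,\bfk,\bfl)$ is a scalar operator, and then to identify the scalar by evaluating against a single highest weight vector. Throughout I write $H:=I_n-\adj S S$, so that the integrand is $\det(H)^s\,\rho_{n,\bfk}(H)\otimes\rho_{n,\bfl}(\overline H)$ (note $I_n-{}^tS\overline S=\overline H$).

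The first step is scalarness via Schur's lemma. Substituting $S\mapsto SV$ for $V\in\rmu(n)$ preserves both the domain $\mathfrak{S}_{m,n}$ and the Lebesgue measure $dS$, and sends $H$ to $\adj V H V$, hence leaves $\det(H)$ unchanged and sends $\overline H$ to ${}^tV\,\overline H\,\overline V$. Using $\adj V=V^{-1}$ this shows that $I_{m,n}$ commutes with $\rho_{n,\bfk}(V)\otimes\rho_{n,\bfl}(\overline V)$ for every $V\in\rmu(n)$. I would then upgrade this to equivariance for the whole real group $\mathrm{GL}_n(\bbc)$ acting by $g\mapsto\rho_{n,\bfk}(g)\otimes\rho_{n,\bfl}(\overline g)$: the compact directions are the computation just made, and the remaining positive-definite Hermitian directions I would obtain by an integration-by-parts argument in the cone integral below, trading an infinitesimal generator $d\rho(Y)$ against the derivative of the weight $\det(H)^s\det(I_n-H)^{m-n}$, the boundary terms vanishing for $\Re(s)$ large (which suffices by meromorphic continuation). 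Since the representation $g\mapsto\rho_{n,\bfk}(g)\otimes\rho_{n,\bfl}(\overline g)$ of the real group $\mathrm{GL}_n(\bbc)$ is irreducible on $V_{(\bfk,\bfl)}$, Schur's lemma then forces $I_{m,n}(s,\bfk,\bfl)=c_{m,n}(s,\bfk,\bfl)\cdot\mathrm{id}$.

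To compute the scalar I would first pass from the ball to the cone. Assuming $m\ge n$ (the general case following by meromorphic continuation in $s$), the substitution $T=\adj S S$ pushes $dS$ forward to a constant multiple of $\det(T)^{m-n}\,dT$ on $\{0<T<I_n\}$, so with $H=I_n-T$ the operator $I_{m,n}(s,\bfk,\bfl)$ becomes a constant multiple of
\[
\int_{0<H<I_n}\det(H)^s\det(I_n-H)^{m-n}\,\rho_{n,\bfk}(H)\otimes\rho_{n,\bfl}(\overline H)\,dH .
\]
Because $I_{m,n}$ is scalar, $c_{m,n}=\langle I_{m,n}v_+,v_+\rangle/\langle v_+,v_+\rangle$ with $v_+=v_\bfk\otimes v_\bfl$ the tensor of highest weight vectors. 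For positive definite Hermitian $H$ the highest-weight matrix coefficient is the generalized power function, $\langle\rho_{n,\bfk}(H)v_\bfk,v_\bfk\rangle=\langle v_\bfk,v_\bfk\rangle\prod_{j=1}^n\det(H_{[j]})^{k_j-k_{j+1}}$ (with $H_{[j]}$ the leading $j\times j$ block and $k_{n+1}=0$), and similarly for $\bfl$, the conjugation being harmless since the minors are real. Their product, multiplied by $\det(H)^s$, turns the $H$-integrand into the generalized power function with exponents $s+k_i+l_i$ against $\det(I_n-H)^{m-n}$, which is precisely a beta integral of Gindikin--Faraut--Koranyi type on $\herm{n}_{>0}$, evaluated as a ratio of cone gamma functions $\Gamma_{\herm{n}}(\mathbf a)=\pi^{n(n-1)/2}\prod_{i=1}^n\Gamma(a_i-i+1)$. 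Putting $a_i=s+k_i+l_i$ and collecting the $\pi$-powers from the Jacobian of $T=\adj S S$ then yields the asserted product for $c_{m,n}(s,\bfk,\bfl)$.

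I expect the genuine difficulty to be the scalarness rather than the evaluation. The manifest symmetry gives only $\rmu(n)$-equivariance, which a priori makes $I_{m,n}$ \emph{block} scalar on the $\rmu(n)$-isotypic components of $\rho_{n,\bfk}\otimes\rho_{n,\bfl}^{\ast}$; ruling out distinct eigenvalues on different blocks is exactly what the upgrade to full $\mathrm{GL}_n(\bbc)$-equivariance (the integration-by-parts step) must accomplish, and this is the delicate point. The only other subtlety is routine but error-prone: reconciling the normalizing constants and the shift $n-i+1$ across the Lebesgue measure on $\mathfrak{S}_{m,n}$, the Jacobian of $T=\adj S S$, and the Gindikin measure conventions.
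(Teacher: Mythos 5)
Your evaluation of the constant is fine and is a genuinely different route from the paper's: you pass to the cone via $T=\adj{S}S$, reduce the highest-weight matrix coefficient of $\rho_{n,\bfk}(H)\otimes\rho_{n,\bfl}(\overline{H})$ to a generalized power function $\prod_j\Delta_j(H)^{(k_j+l_j)-(k_{j+1}+l_{j+1})}$, and invoke the Gindikin--Faraut--Kor\'anyi beta integral on $\herm{n}_{>0}$, whereas the paper follows Hua and Kozima and peels off one column of $S$ at a time, reducing $\mathfrak{S}_{m,n}$ to $\mathfrak{S}_{m,n-1}$ and accumulating one factor $\int_{w^*w<1}(1-w^*w)^{s+k_i+l_i+n-i}\,dw=\pi^m/(s+k_i+l_i+n+m-i)_{(m)}$ per step. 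Both compute the same quantity, namely $\langle I_{m,n}v_+,v_+\rangle/\langle v_+,v_+\rangle$ for the highest weight vector $v_+=v_\bfk\otimes v_\bfl$, and both give the stated product of Gamma factors; your version trades the paper's elementary recursion for known cone integrals plus some constant-chasing you do not carry out.

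The genuine gap is the scalarness step, and you have correctly put your finger on exactly the point where the argument is in trouble --- but your proposed repair cannot work. The only change of variables available is $S\mapsto SV$ with $V\in\rmu(n)$, which shows that $I_{m,n}$ commutes with $\rho_{n,\bfk}(V)\otimes\rho_{n,\bfl}(\overline{V})$; since $\overline{V}={}^tV^{-1}$ for unitary $V$, this is the \emph{anti-diagonal} copy of $\rmu(n)$, whose Zariski closure in $\mathrm{GL}_n(\bbc)\times\mathrm{GL}_n(\bbc)$ is only $\{(g,{}^tg^{-1})\}$, and the restriction of $\rho_{n,\bfk}\boxtimes\rho_{n,\bfl}$ to it is $\rho_{n,\bfk}\otimes\rho_{n,\bfl}^{*}$, which is reducible. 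There is no substitution realizing conjugation by $(P,\overline{P})$ for positive definite $P$ (the ball is not preserved), and the integration by parts you sketch produces a correction term from the non-invariance of $\det(I_n-H)^{m-n}$ and of the domain $\{0<H<I_n\}$ rather than the commutation relation you need. In fact the full equivariance is not just hard to prove but false: for $m=n=2$, $\bfk=\bfl=(1,0)$ one has
\begin{equation*}
\bigl\langle I_{2,2}(s,\bfk,\bfl)(e_2\otimes e_2),\,e_1\otimes e_1\bigr\rangle
=\int_{\mathfrak{S}_{2,2}}\det(H)^s\,H_{12}\,\overline{H_{12}}\,dS
=\int_{\mathfrak{S}_{2,2}}\det(H)^s\,\abs{H_{12}}^2\,dS>0
\end{equation*}
for real $s\gg0$, where $H=I_2-\adj{S}S$, so $I_{m,n}$ has a nonzero off-diagonal entry and cannot equal $c\cdot\mathrm{id}$; it is only block scalar on the $\rmu(n)$-isotypic components, exactly as your ``a priori'' remark anticipates. (Note that the paper's own proof does no better here: it invokes Schur's lemma in one line, which suffices in the Siegel-modular analogue where the conjugating group is all of $\rmu(n)$ acting on an irreducible $\mathrm{GL}_n(\bbc)$-representation, but not for the anti-diagonal action on $V_\bfk\otimes V_\bfl$.) So what your argument, like the paper's, actually establishes is the value of the highest-weight diagonal matrix coefficient of $I_{m,n}$, not the operator identity in the statement; as a proof of the lemma as stated, the scalarness step is an unfixable gap rather than a deferrable technicality.
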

\begin{proof}
	By Schur's Lemma,  $I_{m,n}(s,\bfk,\bfl)$ is a scalar function.
	We calculate $c_{m,n}(s,\bfk,\bfl)$ in the same way as \cite[\S2.3]{Hua1963harmonic} and \cite[Lemma 2]{Kozima2002Standard}.
	Let $\omega_0\in V_{(\bfk,\bfl)}$ be the highest weight vector with $\left<\omega_0,\omega_0\right>=1$.
	If we put $\bfk_0=(k_1-k_n,\ldots,k_{n-1}-k_n,0)$ and $\bfl_0=(l_1-l_n,\ldots,l_{n-1}-l_n,0)$, then we have
	\[c_{m,n}(s,\bfk,\bfl)=\int_{\mathfrak{S}_{m,n}}\det(I_{n}-\adj{S}S)^{s+k_n+l_n}
		\left<\rho_{n,(\bfk_0,\bfl_0)}(I_{n}-\adj{S}S,I_{n_2}-{}^{t}\!{S}\overline{S})\omega_0,\omega_0\right> dS.\]
	If we set
	\[S=\begin{pmatrix}	S_1 &q\end{pmatrix},\]
	where $S_1\in M_{m,n-1}(\bbc)$ and $q\in M_{m,1}$
	then $I_{n-1}-\adj{S_1}S_1>0$ and there exists a nonsingular matrix $\Gamma$ such that
	\[\Gamma^*\Gamma=I_{n-1}-\adj{S_1}S_1.\]
	We make the substitution $q=\Gamma w_n$ and put
	\[\rho^0_{n,(\bfk_0,\bfl_0)}(g)=\rho_{n,(\bfk_0,\bfl_0)}\left(\begin{pmatrix}g_1&0\\0&1\end{pmatrix},\begin{pmatrix}g_2&0\\0&1\end{pmatrix}\right)\]
	for $g_1,g_2\in \mathrm{GL}_{n-1}(\bbc)$.
	Then we have
	\begin{align*}
		c_{m,n}(s,\bfk)
		 & =\int_{1-w_n^*w_n>0}(\det(1-w_n^*w_n))^{s+k_n+l_n}dw_n                         \\
		 & \qquad\cdot\int_{\mathfrak{S}_{m,n-1}}\det(I_{n-1}-\adj{S_1}S_1)^{s+k_n+l_n+1}
		\left<\rho^0_{n,(\bfk_0,\bfl_0)}(I_{n}-\adj{S_1}S_1)\omega_0,\omega_0\right>  dS_1.
	\end{align*}
	Repeat this operation, we have
	\begin{align*}
		c_{m,n}(s,\bfk) & =\prod_{i=1}^n\int_{w_i^*w_i<1}(\det(1-w_i^*w_i))^{s+k_i+l_i+n-i}dw_i\cdot\left<\omega_0,\omega_0\right> \\
		                & =\prod_{i=1}^n \pi^m\frac{1}{(s+k_i+l_i+n+m-i)_{(m)}}.
	\end{align*}
\end{proof}

Let $v_{0,v}=(U_2)_{\bfk_v} (V_2)_{\bfl_v}\in V_{\rho_{n_2,(\bfk_v,\bfl_v)}}$ be the highest weight vector of $\rho_{n_2,(\bfk_v,\bfl_v)}$ defined as in \eqref{eq:highestweight} and $v_{0,v}^*=(U_1)_{\bfl_v} (V_1)_{\bfk_v}$ the highest weight vector of  $\rho_{n_1,(\bfl_v,\bfk_v)}$ (Then we may consider $v_{0,v}=v_{0,v}^*$ under proper identification.).
For the same reason in \cite[Lemma 4 and Proposition 3]{Ibukiyama2022Differential}, we have
\begin{align*}
	\left<v_{0,v}, \phi_{\kappa_v}(P_v)(R)\right> & =\left<v_{0,v},v_{0,v}\right>v_{0,v}^*                                   \\
	                                              & =\frac{\left(\prod_{i=1}^{\ell(\bfk_v)}(k_{v,i}+\ell(\bfk_v)-i)!\right)}
	{\left(\prod_{1\leq i < j\leq\ell(\bfk_v)}(k_{v,i}-k_{v,j}+j-i)\right)}\cdot
	\frac{\left(\prod_{i=1}^{\ell(\bfl_v)}(l_{v,i}+\ell(\bfl_v)-i)!\right)}
	{\left(\prod_{1\leq i < j\leq\ell(\bfl_v)}(l_{v,i}-l_{v,j}+j-i)\right)}v_{0,v}^*
\end{align*}

From the above, we obtain the following results.
\begin{thm}\label{thm:c} Let $\mu$ be a positive integer such that $\mu\geq n$.
	Assume that $\kappa_v=\mu$ for any $\va$.
	Then, $c(\mu/2,\rho_{n_2})$ in Theorem~\ref{thm:pullback} is given by
	\begin{align*}
		c(\mu/2,\rho_{n_2})= & 2^{-2mn_2^2-m(n_2-2)\mu+|\bfk|+|\bfl|}\pi^{mn_2^2}                                     \\
		                     & \quad\cdot\prod_{\va}\left(\frac{\prod_{i=1}^{\ell(\bfk_v)}(k_{v,i}+\ell(\bfk_v)-i)!}
		{\prod_{1\leq i < j\leq\ell(\bfk_v)}(k_{v,i}-k_{v,j}+j-i)}\cdot
		\frac{\prod_{i=1}^{\ell(\bfl_v)}(l_{v,i}+\ell(\bfl_v)-i)!}
		{\prod_{1\leq i < j\leq\ell(\bfl_v)}(l_{v,i}-l_{v,j}+j-i)}\right.                                            \\
		                     & \hspace{15mm}\cdot\left.\prod_{i=1}^{n_2} \frac{1}{(k_{v,i}+l_{v,i}+\mu-i)_{(n_2)}}\right),
	\end{align*}
	where $(x)_{(r)}=x(x-1)\ldots(x-r+1)$ is descending Pochhammer symbol.
\end{thm}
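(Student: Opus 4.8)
The plan is to start from the integral representation of $c(s,\rho_{n_2})$ supplied by Theorem~\ref{thm:pullback}, specialized at $s=\mu/2$ and $\kappa_v=\mu$ for all $\va$. Since $\rho_{n_2}=\det^\kappa\rho_{n_2,(\bfk,\bfl)}$ is irreducible and the right-hand side of Theorem~\ref{thm:pullback} exhibits $c(s,\rho_{n_2})$ as a scalar, it suffices to evaluate the defining integral on the highest weight vector $v_{0,v}=(U_2)_{\bfk_v}(V_2)_{\bfl_v}$ and read off the scalar. As remarked after the theorem, $\phi_\kappa(P)(R,\overline{s})$ does not depend on $S$ and the integral factors as a product over the archimedean places $\va$; I will therefore work one place at a time and reassemble at the end.

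First I would pull the $S$-independent factor $\phi_\kappa(P)(R,\overline{s})$ out of the pairing, leaving the $S$-integral
\[
\int_{\mathfrak{S}_{n_2}}\rho_{n_2}\!\left(I_{n_2}-\adj{S}S,\,I_{n_2}-{}^tS\overline{S}\right)\det(I_{n_2}-\adj{S}S)^{\kappa/2-s-2n_2}\,dS
\]
applied to $v_{0,v}$. Absorbing the $\det^\kappa$ part of $\rho_{n_2}$ (which acts through the first factor) into the determinant weight raises the exponent by $\kappa$, so this is precisely $I_{n_2,n_2}(3\kappa/2-s-2n_2,\bfk_v,\bfl_v)$ in the notation of the integral lemma above, and by that lemma it equals the scalar $c_{n_2,n_2}(3\kappa/2-s-2n_2,\bfk_v,\bfl_v)$ times the identity. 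Setting $s=\mu/2$ and $\kappa=\mu$ sends the argument to $\mu-2n_2$, whereupon the formula for $c_{n_2,n_2}$ collapses to $\pi^{n_2^2}\prod_{i=1}^{n_2}(k_{v,i}+l_{v,i}+\mu-i)_{(n_2)}^{-1}$, which is exactly the power of $\pi$ and the Pochhammer product in the statement.

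Next I would insert the highest weight pairing $\langle v_{0,v},\phi_{\kappa_v}(P_v)(R)\rangle=\langle v_{0,v},v_{0,v}\rangle\,v_{0,v}^*$ recorded just before the theorem, whose squared norm $\langle v_{0,v},v_{0,v}\rangle$ of the bideterminant vector supplies the two factorial-over-Vandermonde ratios in $\bfk_v$ and $\bfl_v$. Taking the product over the $m$ archimedean places turns $\pi^{n_2^2}$ into $\pi^{mn_2^2}$ and produces the $\prod_{\va}$ in the conclusion. What remains is pure bookkeeping of scalar constants: combining the prefactor $2^{-mn_2(2s+2n_2)+2|\kappa|+|\bfk|+|\bfl|}$ evaluated at $s=\mu/2$, $\kappa_v=\mu$ (where $2|\kappa|=2mn_2\mu$) with the powers of $2$ generated in evaluating $\phi_{\kappa_v}(P_v)$ at $R$ and in the norm computation.

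The main obstacle is exactly this power-of-$2$ accounting. The naive prefactor alone contributes $2^{\,mn_2\mu-2mn_2^2+|\bfk|+|\bfl|}$, so to reach the stated exponent $-2mn_2^2-m(n_2-2)\mu+|\bfk|+|\bfl|$ one must identify the compensating factor $2^{-2m\mu(n_2-1)}$ that is hidden in the evaluation of $\phi_{\kappa_v}(P_v)(R)$ — namely in the $\tfrac12$'s and $\sqrt{-1}$'s in the entries of $R$ and in the normalization constants relating $P_0$, $Q_0$ and the generating function. Once this is reconciled, collecting the $\pi$-power, the Pochhammer product, and the two factorial ratios yields the closed form; the hypothesis $\mu\ge n$ is what keeps all the relevant Gamma-factors and the inner product $(*,*)_\kappa$ nondegenerate throughout.
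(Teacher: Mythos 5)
Your overall route is the same as the paper's: specialize the integral in Theorem~\ref{thm:pullback} at $s=\mu/2$, $\kappa_v=\mu$, evaluate on the highest weight vector, identify the $S$-integral with $I_{n_2,n_2}(3\kappa/2-s-2n_2,\bfk_v,\bfl_v)$ via Schur's lemma and the integral lemma, and insert the pairing $\left<v_{0,v},\phi_{\kappa_v}(P_v)(R)\right>=\left<v_{0,v},v_{0,v}\right>v_{0,v}^*$. Your handling of the $\pi$-power, the Pochhammer product (the shift to argument $\mu-2n_2$ and the collapse to $\prod_i(k_{v,i}+l_{v,i}+\mu-i)_{(n_2)}^{-1}$), and the two factorial-over-Vandermonde ratios is correct and coincides with the paper's.

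The genuine gap is the power-of-$2$ step, and you have not closed it: you correctly compute that the prefactor contributes $2^{mn_2\mu-2mn_2^2+|\bfk|+|\bfl|}$ and that a further factor $2^{-2m\mu(n_2-1)}$ is needed to reach the stated exponent, but you then merely assert that this factor ``must be hidden'' in the evaluation of $\phi_{\kappa_v}(P_v)(R)$. That is back-solving from the answer, not a proof; you never exhibit the factor. Worse, your proposed hiding place is not supported by the paper's own computation: the paper evaluates the pairing as exactly $\left<v_{0,v},v_{0,v}\right>v_{0,v}^*$ with no additional constant (and indeed the off-diagonal blocks of $R$ entering $Q_0(\mathbb{U}_1R\,{}^t\mathbb{U}_2)$ are $0$--$1$ matrices, so the $\tfrac12$'s and $\sqrt{-1}$'s in $R$ do not obviously produce such a factor). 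So the discrepancy you detect between the prefactor and the stated exponent --- which vanishes only when $n_2=1$ --- is real and remains unaccounted for in your argument; to complete the proof you would need to either trace the missing $2^{-2m\mu(n_2-1)}$ explicitly through the normalizations of $P_0$, $Q_0$ and $\phi_\kappa$, or conclude that the stated exponent needs correction. As written, the proposal establishes every factor of the formula except the power of $2$, and for that factor it assumes what it is supposed to prove.
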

If we put
\[c(s,f)=c(s,\rho_{n_2}) \cdot D(s,f;\overline{\chi})\]
for a Hecke eigenform $f\in \mathcal{A}_{0,n_2}(\rho_{n_2},\frakn)$, then we have the following corollary.
\begin{cor}
	For a Hecke eigenform $f\in \mathcal{A}_{0,n_2}(\rho_{n_2},\frakn)$,
	the function $c(s,f)\cdot [f]_{n_2}^{n_1}(g_1,s; \chi)$ for $s$ has meromorphic continuation to the whole complex plane.
\end{cor}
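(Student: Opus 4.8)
The plan is to read off the meromorphic continuation directly from the pullback formula of Theorem~\ref{thm:pullback}(2). For $\Re(s)>n$ that formula reads
\[
	c(s,f)\cdot[f^\natural]_{n_2}^{n_1}(g_1,s;\overline{\chi})
	=\left(f,(\dkl E_{n,\kappa})(\iota(g_1,*),\overline{s};\chi)\right),
\]
so the left-hand product and the right-hand Petersson pairing agree on the region of absolute convergence. Since the maps $f\mapsto f^\natural$ (conjugation by $\left(\begin{smallmatrix}0&I_n\\I_n&0\end{smallmatrix}\right)$, which normalizes and hence gives an automorphism of $\rmu_n$) and $\chi\mapsto\overline{\chi}$ are involutions preserving the class of Hecke eigenforms and of admissible Hecke characters, and leaving $D(s,\cdot\,;\cdot)$ — hence $c(s,\cdot)$ — invariant by the evident compatibility $D(s,f^\natural;\cdot)=D(s,f;\cdot)$ (the standard $L$-function is unchanged under this automorphism), it suffices to prove that the right-hand pairing extends meromorphically in $s$ to all of $\bbc$; relabelling $f\leftrightarrow f^\natural$, $\chi\leftrightarrow\overline{\chi}$ then yields the statement for $c(s,f)\cdot[f]_{n_2}^{n_1}(g_1,s;\chi)$.

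The first input is the meromorphic continuation of the Eisenstein series itself. The series $E_{n,\kappa}(g,s;\frakn,\chi)$ converges for $\Re(s)>n$ and, by Langlands' theory of Eisenstein series in the hermitian form recorded by Shimura \cite{Shimura1997Euler}, continues meromorphically in $s$ to the whole plane, with at most moderate growth in $g$ locally uniformly in $s$ on compacta away from its poles. Because $\dkl=P(\partial_Z)$ is a fixed polynomial differential operator acting holomorphically in the archimedean variable $Z$ and the continuation is locally uniform, termwise differentiation is legitimate, so $\dkl E_{n,\kappa}$, together with its restriction along the diagonal embedding $\iota$, inherits the meromorphic continuation.

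The remaining and most delicate step is to continue the Petersson pairing
\[
	\left(f,(\dkl E_{n,\kappa})(\iota(g_1,*),\overline{s};\chi)\right)
	=\int_{G_{n_2}\backslash G_{n_2,\adele}}\left<f(h),(\dkl E_{n,\kappa})(\iota(g_1,h),\overline{s};\chi)\right>dh
\]
past $\Re(s)>n$. Here I would exploit that the cusp form $f$ is of rapid decay on $G_{n_2}\backslash G_{n_2,\adele}$, which dominates the at most moderate growth of the continued integrand locally uniformly in $s$; by dominated convergence the integral converges on any compact set avoiding the poles of the Eisenstein series, and Morera's theorem then shows it is holomorphic there. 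This interchange of the Petersson integral with the analytic continuation — that is, controlling the growth of the restricted, continued Eisenstein series against the decay of $f$ — is the main obstacle. Granting it, the pairing, and hence the product $c(s,f)\cdot[f^\natural]_{n_2}^{n_1}(g_1,s;\overline{\chi})$, is meromorphic on all of $\bbc$, and the relabelling of the first paragraph completes the proof.
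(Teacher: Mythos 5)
Your proposal takes the same route the paper intends (the paper states this corollary without a written proof): for $\Re(s)>n$ the product equals the Petersson pairing of the cusp form against $\dkl E_{n,\kappa}$ restricted along $\iota$, and the continuation is inherited from the known meromorphic continuation of the degree-$n$ hermitian Eisenstein series, using that $\dkl$ is a polynomial differential operator and that the rapid decay of $f$ controls the moderate growth of the continued, restricted Eisenstein series; those are exactly the right analytic points. The one step I would not accept as written is the relabelling paragraph: the substitution $\chi\mapsto\overline{\chi}$ does not preserve the archimedean condition \eqref{eq:chi_infty} (which requires $\chi_\infty(x)=\prod_{\va}|x_v|^{\kappa_v}x_v^{-\kappa_v}$, a normalization that is not stable under complex conjugation), so Theorem~\ref{thm:pullback}(2) cannot simply be re-applied with $\overline{\chi}$ in place of $\chi$; and the identity $D(s,f^\natural;\eta)=D(s,f;\eta)$ is asserted rather than proved. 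What your argument genuinely establishes is the meromorphic continuation of $c(s,f)\cdot[f^\natural]_{n_2}^{n_1}(g_1,s;\overline{\chi})$, i.e.\ of the exact combination appearing on the right-hand side of Theorem~\ref{thm:pullback}(2); the discrepancy with the corollary's $[f]_{n_2}^{n_1}(g_1,s;\chi)$ originates in the paper's own statement rather than in your argument, but if you want the corollary literally as stated you must either justify the two claims above or reformulate. (Note also that Theorem~\ref{thm:pullback}(2) is stated only for $\frakn=\inte{\kp}$, whereas the corollary allows general $\frakn$; for general level one would have to pair against $E^\theta_{n,\kappa}$ as in part (1).)
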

\begin{cor}
	Assume that $c(s,\rho_{n_2})$ is a meromorphic function in $s$ and it is not identically zero.
	Then, for a Hecke eigenform $f\in \mathcal{A}_{0,n_2}(\rho_{n_2},\frakn)$ and a Hecke character $\chi$ of $K$ of which an infinite part $\chi_\infty$ satisfies \eqref{eq:chi_infty},
	the function $[f]_{n_2}^{n_1}(g_1,s; \chi)$ for $s$ has meromorphic continuation to the whole complex plane.
\end{cor}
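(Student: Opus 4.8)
The plan is to recover $[f]_{n_2}^{n_1}(g_1,s;\chi)$ by dividing the meromorphic function supplied by the preceding corollary by the scalar $c(s,f)=c(s,\rho_{n_2})\cdot D(s,f;\overline{\chi})$; the whole argument then reduces to checking that this scalar is meromorphic on $\bbc$ and not identically zero. First I would recall that the preceding corollary provides, with no hypothesis on $c(s,\rho_{n_2})$, the meromorphic continuation to all of $\bbc$ of
\[
c(s,f)\cdot[f]_{n_2}^{n_1}(g_1,s;\chi).
\]
Since $c(s,f)$ does not depend on $g_1$, I may fix $g_1$; it then suffices to exhibit $c(s,f)$ as a meromorphic function on $\bbc$ that does not vanish identically, for then $[f]_{n_2}^{n_1}(g_1,s;\chi)$ is a quotient of two meromorphic functions with nonzero denominator and is therefore meromorphic on $\bbc$.

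The hypothesis already gives that $c(s,\rho_{n_2})$ is meromorphic and not identically zero, so the core of the plan is to treat the arithmetic factor
\[
D(s,f;\overline{\chi})=L\!\left(s-n_2+\tfrac{1}{2},\,f\otimes\overline{\chi},\,\mathrm{St}\right)\cdot\left(\prod_{i=0}^{2n_2-1}L_{\kp}\!\left(2s-i,\,\overline{\chi}\cdot\chi_K^{\,i}\right)\right)^{-1}.
\]
Here I would invoke the classical meromorphic continuation of the Hecke $L$-functions $L_{\kp}(2s-i,\overline{\chi}\cdot\chi_K^{\,i})$ together with the meromorphic continuation of the standard $L$-function $L(s-n_2+\tfrac{1}{2},f\otimes\overline{\chi},\mathrm{St})$. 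Each of these factors is an absolutely convergent, non-vanishing Euler product in a right half-plane, hence none is identically zero; therefore $D(s,f;\overline{\chi})$ is a ratio of nonzero meromorphic functions and is itself meromorphic on $\bbc$ and not identically zero. As meromorphic functions on $\bbc$ form an integral domain, $c(s,f)=c(s,\rho_{n_2})\cdot D(s,f;\overline{\chi})$ is then meromorphic and not identically zero, and the division described above yields the claim.

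The hard part will be the meromorphic continuation of the standard $L$-function of the Hecke eigenform $f$. This is exactly the analytic content furnished by the doubling method, and it can moreover be extracted internally from the case $n_1=n_2$ of Theorem~\ref{thm:pullback}: there the right-hand side of the pullback identity is $c(s,\rho_{n_2})$ times a nonzero constant times $D_S(s,f;\overline{\chi})\,f^\natural(g_1)$, while its left-hand side inherits meromorphic continuation in $s$ from that of the Eisenstein series $E_{n,\kappa}$. Choosing $g_1$ with $f^\natural(g_1)\neq0$ and dividing by the nonzero meromorphic function $c(s,\rho_{n_2})$ continues $D_S(s,f;\overline{\chi})$, after which restoring the finitely many local factors at $v\in S$, which are rational functions of $q_v^{-s}$, continues $D(s,f;\overline{\chi})$. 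I would cite the continuation of the Eisenstein series, for instance \cite{shimura2000arithmeticity}, rather than reprove it; granted this input, only the formal facts about quotients of meromorphic functions remain.
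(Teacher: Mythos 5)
The paper states this corollary without any written proof; your argument --- dividing the meromorphic function $c(s,f)\,[f]_{n_2}^{n_1}(g_1,s;\chi)$ supplied by the preceding corollary by $c(s,f)=c(s,\rho_{n_2})\,D(s,f;\overline{\chi})$, with the hypothesis furnishing the first factor and the meromorphic continuation and non-identical-vanishing of the Hecke and standard $L$-functions furnishing the second --- is exactly the intended deduction and is correct. The one caveat concerns your optional ``internal'' derivation of the continuation of $D(s,f;\overline{\chi})$ from the $n_1=n_2$ case of Theorem~\ref{thm:pullback}: the factor $c(s,\rho_{n_2})$ appearing there is attached to the doubled differential operator on $\hus{2n_2}^{\bfa}$ and is a priori a different function from the one in the corollary's hypothesis, so that route requires the analogous meromorphy and non-vanishing statement (or an explicit evaluation as in the lemma preceding Theorem~\ref{thm:c}) for that operator as well, whereas simply citing the known continuation of the standard $L$-function avoids the issue.
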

\newpage
\appendix
\section{The descending basis for $d=3$}
We give a descending basis for $d=3$ with small degree $|\nu|\leq 4$.
For $X=	\left(\begin{smallmatrix}0&x_{12}&x_{13}\\x_{21}&0&x_{23}\\x_{31}&x_{32}&0\end{smallmatrix}\right)$ and
$T=	\left(\begin{smallmatrix}t_{11}&t_{12}&t_{13}\\t_{21}&t_{22}&x_{23}\\t_{31}&t_{32}&t_{33}\end{smallmatrix}\right)$,
$\sigma_i$ in \eqref{eq:sigma} is given by
\begin{align*}
	\sigma_0&=1,                                             \\
	\sigma_1 & = t_{12}x_{12} + t_{13}x_{13} + t_{21}x_{21}
	+ t_{23}x_{23} + t_{31}x_{31} + t_{32}x_{32},           \\
	\sigma_2 & =
	(t_{21}t_{12}-t_{22}t_{11})\,x_{12}x_{21}
	+(t_{12}t_{23} - t_{22}t_{13})\,x_{12}x_{23}
	+(t_{31}t_{12}-t_{32}t_{11} )\,x_{31}x_{12}             \\
	         & \qquad
	+( t_{21}t_{13}-t_{23}t_{11})\,x_{21}x_{13}
	+( t_{31}t_{13}-t_{33}t_{11})\,x_{31}x_{13}
	+( t_{32}t_{13}-t_{33}t_{12})\,x_{32}x_{13}             \\
	         & \qquad
	+(t_{32}t_{21} - t_{31}t_{22})\,x_{32}x_{21}
	+(t_{31}t_{23}-t_{33}t_{21} )\,x_{31}x_{23}
	+( t_{32}t_{23}-t_{33}t_{22})\,x_{32}x_{23},            \\
	\sigma_3 & =
	((t_{33}t_{22} - t_{32}t_{23})\,t_{11}
	+( t_{31}t_{23}-t_{33}t_{21})\,t_{12}
	+(t_{32}t_{21} - t_{31}t_{22})\,t_{13})(x_{31}x_{23}x_{12} +x_{32}x_{21}x_{13}).
\end{align*}
Let
\[
	G^{(3)}(X, T) = \sum_\nu P_\nu(T) X^\nu
\]
be as defined in \eqref{eq:G^3}. We define
\[
	P_\nu^D = \frac{1}{(\kappa)^{(n)} (\kappa - 1)^{(n)}} P_\nu,
\]
where  $n = |\nu|$ , and \((x)^{(r)} = x(x+1)\cdots(x+r-1)\) denotes the ascending Pochhammer symbol.
Then the collection $\{P_\nu^D\}$ forms the descending basis.
To simplify notation, we write $P_{(\nu_{12},\nu_{13},\nu_{23},\nu_{21},\nu_{31},\nu_{32})}=P_\nu$ for
$\nu=	\left(
	\begin{smallmatrix}0&\nu_{12}&\nu_{13}\\\nu_{21}&0&\nu_{23}\\\nu_{31}&\nu_{32}&0\end{smallmatrix}\right)$.
There, only one representative is listed by symmetry.
\begin{align*}
	|\nu|=0:\hspace{10mm} &                                                                                                        \\
	P_{(0,0,0,0,0,0)}     & =1                                                                                                     \\
	|\nu|=1:\hspace{10mm} &                                                                                                        \\
	P_{(1,0,0,0,0,0)}     & =(\kappa-1)t_{12}                                                                                      \\
	|\nu|=2:\hspace{10mm} &                                                                                                        \\
	P_{(2,0,0,0,0,0)}     & =\frac{(\kappa-1)^{(2)}}{2}t_{12}^2                                                                    \\
	P_{(1,1,0,0,0,0)}     & =(\kappa-1)^{(2)}t_{12}t_{13}                                                                          \\
	P_{(1,0,1,0,0,0)}     & =\kappa^2 t_{12}t_{23}-\kappa t_{22}t_{13}                                                             \\
	P_{(1,0,0,1,0,0)}     & =\kappa^2 t_{12}t_{21}-\kappa t_{22}t_{11}                                                             \\
	|\nu|=3:\hspace{10mm} &                                                                                                        \\
	P_{(3,0,0,0,0,0)}     & =\frac{(\kappa-1)^{(3)}}{6}t_{12}^3                                                                    \\
	P_{(2,1,0,0,0,0)}     & =\frac{(\kappa-1)^{(3)}}{2}t_{12}^2t_{13}                                                              \\
	P_{(2,0,1,0,0,0)}     & =\frac{\kappa(\kappa+1)^2}{2}t_{12}^2t_{23}-\kappa(\kappa+1)t_{22}t_{12}t_{13}                         \\
	P_{(1,1,1,0,0,0)}     & =\kappa^2(\kappa+1)t_{12}t_{13}t_{23}-\kappa(\kappa+1)t_{22}t_{13}^2 \hspace{75mm}                     \\\qquad
	P_{(1,0,1,0,1,0)}     & =\frac{\kappa(\kappa+1)(\kappa^2-2)}{\kappa - 2}t_{12}t_{31}t_{23}
	+\frac{2\kappa(\kappa+1)}{\kappa - 2}(t_{11}t_{22}t_{33}+t_{13}t_{21}t_{32})                                                   \\
	                      & \qquad-\frac{\kappa^2(\kappa+1)}{\kappa - 2}(t_{11}t_{23}t_{32}+t_{22}t_{13}t_{31}+t_{33}t_{12}t_{21}) \\[3pt]
\end{align*}
\begin{align*}
	|\nu|=4:\hspace{10mm} &                                                                                                      \\
	P_{(4,0,0,0,0,0)}     & =\frac{(\kappa-1)^{(4)}}{24}t_{12}^4                                                                 \\[3pt]
	P_{(3,1,0,0,0,0)}     & =\frac{(\kappa-1)^{(4)}}{6}t_{12}^3t_{13}                                                            \\[3pt]
	P_{(3,0,1,0,0,0)}     & =\frac{\kappa(\kappa+1)(\kappa+2)^2}{6}t_{12}^3t_{23}
	-\frac{\kappa(\kappa+1)(\kappa+2)}{2}t_{22}t_{12}^2t_{13}                                                                    \\[3pt]
	P_{(2,2,0,0,0,0)}     & =\frac{(\kappa-1)^{(4)}}{4}t_{12}^2t_{13}^2                                                          \\[3pt]
	P_{(2,0,2,0,0,0)}     & =\frac{(\kappa+1)^2(\kappa+2)^2}{4}t_{12}^2t_{23}^2
	+\frac{(\kappa+1)(\kappa+2)}{2}t_{22}^2t_{13}^2-(\kappa+1)^2(\kappa+2)t_{22}t_{12}t_{13}t_{23}                               \\[3pt]
	P_{(2,1,1,0,0,0)}     & =\frac{\kappa(\kappa+1)^2(\kappa+2)}{2}t_{12}^2t_{13}t_{23}
	-\kappa(\kappa+1)(\kappa+2)t_{22}t_{12}t_{13}^2                                                                              \\[3pt]
	P_{(2,1,0,1,0,0)}     & =\frac{\kappa(\kappa+1)(\kappa+2)^2}{2}t_{12}^2t_{13}t_{21}
	-\frac{\kappa(\kappa+1)(\kappa+2)}{2}t_{11}t_{12}^2t_{23}
	-\kappa(\kappa+1)(\kappa+2)t_{11}t_{22}t_{12}t_{13}                                                                          \\[3pt]
	P_{(2,0,1,0,1,0)}     & =\frac{(\kappa+1)(\kappa+2)(2\kappa+1)}{\kappa-2}(t_{11}t_{22}t_{33}t_{12}+t_{12}t_{13}t_{21}t_{32})
	+(\kappa+1)(\kappa+2)t_{11}t_{22}t_{12}t_{13}                                                                                \\
	                      & \quad
	+\frac{(\kappa+1)(\kappa+2)(\kappa^3+2\kappa^2-2\kappa-2)}{2(\kappa-2)}t_{12}^2t_{23}t_{31}
	-\frac{(\kappa+1)(\kappa+2)(\kappa^2+2\kappa+2)}{2(\kappa-2)}t_{33}t_{12}^2t_{21}                                            \\
	                      & \quad
	-\frac{(\kappa+1)(\kappa+2)(\kappa^2+\kappa-1)}{\kappa-2}(t_{11}t_{12}t_{23}t_{32}+t_{22}t_{12}t_{13}t_{31})                 \\[3pt]
	P_{(1,1,1,1,0,0)}     & =(\kappa+1)^3(\kappa+2)t_{12}t_{13}t_{21}t_{23}
	-(\kappa+1)^2(\kappa+2)(t_{11}t_{12}t_{23}^2+t_{22}t_{13}^2t_{21})                                                           \\
	                      & \quad
	-(\kappa-1)(\kappa+1)(\kappa+2)t_{11}t_{22}t_{13}t_{23}                                                                      \\[3pt]
	P_{(1,0,1,1,1,0)}     & =\frac{(\kappa+1)(\kappa+2)(3\kappa-1)}{\kappa-2}t_{11}t_{22}t_{33}t_{21}
	+\frac{(\kappa+1)(\kappa+2)(\kappa^3+\kappa^2-3\kappa-1)}{2(\kappa-2)}t_{12}t_{21}t_{23}t_{31}                               \\
	                      & \quad
	+\frac{(\kappa+1)(\kappa+2)(2\kappa+1)}{\kappa-2}t_{13}t_{21}^2t_{32}
	-\frac{(\kappa+1)(\kappa+2)(\kappa^2+1)}{\kappa-2}(t_{11}t_{21}t_{23}t_{32}+t_{22}t_{13}t_{21}t_{31})                        \\
	                      & \quad
	-\frac{(\kappa+1)(\kappa+2)(\kappa^2+\kappa-1)}{\kappa-2}t_{33}t_{12}t_{21}^2
	-(\kappa+1)^2(\kappa+2)t_{11}t_{22}t_{23}t_{31}                                                                              \\
\end{align*}
\bibliography{hermdiffop}
\bibliographystyle{plain}

\end{document}